\newtheorem{theorem}{Theorem}[section]
\newtheorem{proposition}[theorem]{Proposition}
\newtheorem{corollary}[theorem]{Corollary}
\newtheorem{lemma}[theorem]{Lemma}
\newtheorem{conjecture}[theorem]{Conjecture}
\theoremstyle{definition}
\newtheorem{definition}[theorem]{Definition}
\newtheorem{question}[theorem]{Question}
\newtheorem{remark}[theorem]{Remark}
\newcommand{\CC}{\mathbb{C} }
\newcommand{\PP}{\mathbb{P} }
\newcommand{\QQ}{\mathbb{Q} }
\newcommand{\RR}{\mathbb{R} }
\newcommand{\VV}{\mathbb{V} }
\newcommand{\ZZ}{\mathbb{Z} }
\newcommand{\cE}{\mathcal{E} }
\newcommand{\cF}{\mathcal{F} }
\newcommand{\cL}{\mathcal{L} }
\newcommand{\cM}{\mathcal{M} }
\newcommand{\cO}{\mathcal{O} }
\newcommand{\cQ}{\mathcal{Q} }
\newcommand{\cW}{\mathcal{W} }
\newcommand{\cY}{\mathcal{Y} }
\newcommand{\rH}{\mathrm{H} }
\newcommand{\rN}{\mathrm{N} }
\newcommand{\ba}{\mathbf{a} }
\newcommand{\bfb}{\mathbf{b} }
\newcommand{\bp}{\mathbf{p} }
\newcommand{\bq}{\mathbf{q} }
\newcommand{\bM}{\mathbf{M} }
\newcommand{\bQ}{\mathbf{Q} }
\newcommand{\bsq}{\mathbf{q} }
\newcommand{\bR}{\mathbf{R} }
\newcommand{\bS}{\mathbf{S} }
\def\Hom{\mathrm{Hom} }
\def\GL{\mathrm{GL}}
\def\SL{\mathrm{SL}}
\def\PGL{\mathrm{PGL}}
\def\Fl{\mathrm{Fl}}
\def\Pic{\mathrm{Pic} }
\def\rk{\mathrm{rank}\, }
\def\spec{\mathrm{Spec}\;}
\def\Cox{\mathrm{Cox}}
\def\proj{\mathrm{Proj}\;}
\def\Eff{\mathrm{Eff}}
\begin{document}

\title[Finite generation of the algebra of conformal blocks II]{Finite generation of the algebra of type A conformal blocks via birational geometry II: higher genus}
\date{\today}

\author{Han-Bom Moon}
\address{Department of Mathematics, Fordham University, New York, NY 10023, USA}
\email{hmoon8@fordham.edu}

\author{Sang-Bum Yoo}
\address{School of Natural Science, UNIST, 50 UNIST-gil, Ulsan 44919, Republic of Korea}
\email{sangbum.yoo@gmail.com}

\begin{abstract}
We prove finite generation of the algebra of type A conformal blocks over arbitrary stable curves of any genus. As an application we construct a flat family of irreducible normal projective varieties over the moduli stack of stable pointed curves, whose fiber over a smooth curve is a moduli space of semistable parabolic bundles. This generalizes a construction of a degeneration of the moduli space of vector bundles presented in a recent work of Belkale and Gibney.
\end{abstract}

\maketitle

\section{Introduction}

A \emph{Conformal block} is a representation theoretic object constructed as an example of two-dimensional chiral conformal field theory (\cite{TK88}, \cite{TUY89}, \cite{Uen08}). For each collection of data consisting of $X = (C, \bp) \in \overline{\cM}_{g, n}$, a simple Lie algebra $\mathfrak{g}$, $\ell \in \ZZ_{\ge 0}$, and a collection of dominant integral weights $\vec{\lambda} = (\lambda^{1}, \lambda^{2}, \cdots, \lambda^{n})$ such that $(\lambda^{i}, \theta) \le \ell$, there is a systematic way to construct a finite dimensional vector space $\VV_{X, \mathfrak{g}, \ell, \vec{\lambda}}^{\dagger}$, the space of conformal blocks. They form a locally free sheaf $\VV_{\mathfrak{g}, \ell, \vec{\lambda}}^{\dagger}$ over $\overline{\cM}_{g, n}$ and satisfy several functorial properties (Theorem \ref{thm:propagation}, Theorem \ref{thm:factorization}) motivated by conformal field theory.

Furthermore, conformal blocks have a surprising connection with algebraic geometry. For any $X \in \overline{\cM}_{g, n}$, $\VV_{X, \mathfrak{g}, \ell, \vec{\lambda}}^{\dagger}$ is realized as $\rH^{0}(\cM_{X}(G), L^{\ell, \vec{\lambda}})$, the space of global sections of a line bundle $L^{\ell, \vec{\lambda}}$ over the algebraic stack of parabolic principal $G$-bundles (\cite{BL94}, \cite{BF19}, \cite{Fal94}, \cite{KNR94}, \cite{LS97}, \cite{Pau96}). Since $\cM_{X}(G)$ is not proper, $h^{0}(\cM_{X}(G), L^{\ell, \vec{\lambda}}) < \infty$ is already non-trivial.

The identification $\VV_{X, \mathfrak{g}, \ell, \vec{\lambda}}^{\dagger} \cong \rH^{0}(\cM_{X}(G), L^{\ell, \vec{\lambda}})$ gives a multiplication map $\VV_{X, \mathfrak{g}, \ell, \vec{\lambda}}^{\dagger} \otimes \VV_{X, \mathfrak{g}, m, \vec{\mu}}^{\dagger} \to \VV_{X, \mathfrak{g}, \ell+m, \vec{\lambda}+\vec{\mu}}^{\dagger}$. Thus the sum of all conformal blocks
\[
	\VV_{X, \mathfrak{g}}^{\dagger} :=
	\bigoplus_{\ell, \vec{\lambda}}
	\VV_{X, \mathfrak{g}, \ell, \vec{\lambda}}^{\dagger}
\]
has a $\Pic(\cM_{X}(G))$-graded commutative $\CC$-algebra structure. This paper is mostly concerned with type A case ($\mathfrak{g} = \mathfrak{sl}_{r}$), so we set $\VV_{X}^{\dagger} = \VV_{X, \mathfrak{sl}_{r}}^{\dagger}$.

In this paper, we answer the question concerning the finite generation of $\VV_{X}^{\dagger}$. In \cite{MY17}, we prove the result for $X = (\PP^{1}, \bp) \in \cM_{0, n}$. When $n = 0$, i.e., when there is no marked point, Belkale and Gibney prove the finite generation of $\VV_{X}^{\dagger}$ in \cite{BG19}. Here we generalize these two results to arbitrary stable pointed curves. Our proof is completely different from \cite{BG19}.

\begin{theorem}[(\protect{Theorem \ref{thm:finitegenerationsingular}})]\label{thm:mainthmintro}
Let $X \in \overline{\cM}_{g, n}$ be any stable pointed curve. Then the algebra $\VV_{X}^{\dagger}$ of type A conformal blocks over $X$ is finitely generated.
\end{theorem}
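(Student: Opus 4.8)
The plan is to reduce the statement for an arbitrary stable curve to two cases already in the literature --- genus zero (\cite{MY17}) and no marked points (\cite{BG19}) --- using the factorization theorem to strip off nodes, together with a projective-geometric analysis of the moduli of parabolic bundles to treat smooth curves. The passage from singular to smooth curves comes directly from factorization. If $X$ has a node, let $\tilde X$ be its normalization at that node, carrying two new marked points $q, q'$. Summing the factorization isomorphism over all levels and weights identifies $\VV_{X}^\dagger$, as a multigraded vector space, with the part of $\VV_{\tilde X}^\dagger$ (for a non-separating node) or of $\VV_{X_1}^\dagger \otimes \VV_{X_2}^\dagger$ (for a separating node $X = X_1 \cup X_2$) in which the two levels agree and the weights at $q$ and $q'$ are dual, $\mu$ and $\mu^*$. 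Iterating over all nodes reduces to the smooth components of the total normalization.

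The first point I would verify is that this identification respects multiplication, so that $\VV_{X}^\dagger$ is described not merely as a graded vector space but as a graded algebra: concretely, as the diagonal (Segre-type) subalgebra of $\bigotimes_v \VV_{C_v}^\dagger$, indexed by the smooth components $C_v$ of the normalization, cut out by equating the level across the whole curve and pairing dual weights $\mu, \mu^*$ along each edge of the dual graph. Granting this, finite generation of $\VV_X^\dagger$ follows from finite generation of each $\VV_{C_v}^\dagger$: tensor products of finitely generated $\CC$-algebras are finitely generated, and the diagonal subalgebra selected by the level-matching and weight-dualizing conditions is again finitely generated, these being linear conditions and the monoid of admissible weights (dominant weights of level at most $\ell$, with $\mu \mapsto \mu^*$ linear) being finitely generated; this is a Segre-type construction, which I expect preserves finite generation once that monoid is understood.

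It remains to prove finite generation for a smooth curve $X$, which is the heart of the matter. Here I would use the realization $\VV_X^\dagger \cong \bigoplus_{\ell, \vec\lambda} \rH^0(M_X, L^{\ell, \vec\lambda})$ on the normal projective moduli space $M_X$ of semistable parabolic bundles. Forgetting the parabolic structure gives a fibration $M_X \to N_X$ onto the moduli of semistable bundles whose fibers are products of partial flag varieties, one per marked point, and the conformal line bundles restrict on these fibers to Borel--Weil line bundles. This presents $\VV_X^\dagger$ as a relative Cox ring of a flag bundle, twisted by powers of the theta bundle $\Theta$ on $N_X$. The genus-zero case is \cite{MY17}, the no-marked-point ring $\bigoplus_\ell \rH^0(N_X, \Theta^\ell)$ is finitely generated by \cite{BG19}, and the task is to assemble the finitely generated flag-variety directions over this base. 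As supporting structure, I would exploit that local freeness of the conformal block sheaves makes the graded block algebra flat over any one-parameter degeneration, so that for each fixed multidegree a generation map is a map of finite free modules over the base DVR; Nakayama then propagates a fixed generating degree from the special fiber to the generic fiber, and degenerating $X$ to a maximally degenerate stable curve with only three-pointed rational components reduces a generic smoothing to the genus-zero input of \cite{MY17} via the Segre description above.

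The main obstacle is precisely the smooth higher-genus case with marked points. The flat-degeneration argument only yields a general smooth curve in each moduli stratum, so the essential work is the birational geometry of $M_X$ for every smooth $X$: controlling the cone of conformal divisors and building the parabolic weight directions as a finitely generated sheaf of algebras over the theta ring of \cite{BG19} through the flag-bundle structure, while keeping the multiplicative structure matched to the weight data. A secondary but necessary point, used throughout the reduction, will be checking that the factorization isomorphisms are algebra homomorphisms onto the stated Segre-type subalgebras, and not merely linear isomorphisms.
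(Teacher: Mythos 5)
Your reduction from singular to smooth curves is essentially the paper's own: factorization, summed over all levels and weights, realizes $\VV_{X}^{\dagger}$ as the subalgebra of $\bigotimes_{j}\VV_{X_{j}}^{\dagger}$ graded by the subgroup of the product Picard group cut out by level-matching and weight-dualizing along the nodes, and the paper indeed has to verify (Proposition \ref{prop:factorizationalgebra}) that this identification is multiplicative, exactly the point you flag. One caveat on your justification of the last step: finite generation of the monoid of admissible weights does \emph{not} by itself yield finite generation of the diagonal subalgebra (knowing the grading monoid is finitely generated gives no control over algebra generators in each graded piece). The paper's Lemma \ref{lem:invariantsubring} closes this correctly: the subalgebra $R_{B}$ is the ring of invariants of the diagonalizable group $\Hom(A/B,\CC^{*})$ acting on the finitely generated algebra $R$, so Nagata's theorem \cite{Dol03} applies. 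Your gap here is fillable, but the argument you give is not the right one.

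The genuine gap is the smooth positive-genus case with marked points, which you yourself label ``the essential work'' and for which your sketch would not go through as stated. First, the forgetful map from $\bM_{X}(r,\cO,\ba)$ to the moduli of semistable bundles is in general only a \emph{rational} map: parabolic semistability neither implies nor is implied by semistability of the underlying bundle, so there is no global fibration by products of flag varieties, and hence no presentation of $\VV_{X}^{\dagger}$ as a relative Cox ring over the theta ring of \cite{BG19}. Second, even granting such a fibration, finite generation does not pass formally from base and fibers to the total ring (Mori-dream-ness is not inherited by fibrations or projective bundles), so the ``assembly'' step is precisely the unproved content. Third, your flatness-plus-Nakayama degeneration propagates a generating degree only to the \emph{generic} fiber of a one-parameter family, hence covers only general curves, as you concede. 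The paper's actual mechanism for every smooth $X$ with $g\ge 1$ is entirely different and is what your proposal is missing: a codimension estimate for the unstable locus in the style of Sun \cite{Sun00} (Proposition \ref{prop:codimestimation}), which identifies $\Cox(\cM_{X}(r,\cO))$ with $\Cox(\bM_{X}(r,\cO,\ba))$ for a dominant weight (Proposition \ref{prop:coxringidentification}); an anticanonical computation (Proposition \ref{prop:canonicaldivisor}) and wall-crossing to the central weight showing $\bM_{X}(r,\cO,\ba)$ is of Fano type (Theorem \ref{thm:fanotype}, via \cite{GOST15}); and then \cite{BCHM10} to conclude it is a Mori dream space (Corollary \ref{cor:MDS}), whose Cox ring is $\VV_{X}^{\dagger}$. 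The residual case $g=1$, $n\le r$, where the codimension estimate fails, is handled by propagation of vacua plus Lemma \ref{lem:invariantsubring}, another step absent from your outline. Without some substitute for this Fano-type argument, your proposal proves the theorem only modulo its hardest case.
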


\subsection{Degeneration of moduli spaces}

In this section we present an important application of Theorem \ref{thm:mainthmintro}.

Note that $\cM_{X}(\SL_{r})$ is naturally identified with $\cM_{X}(r, \cO)$, the moduli stack of rank $r$ (parabolic) vector bundles with trivial determinant. The geometry of the moduli space of semistable parabolic vector bundles has attracted many geometers throughout the last several decades. Since a very useful method to study geometry of an algebraic variety is to construct a flat degeneration with good geometric properties, one may wonder what a good degeneration of the moduli space of (parabolic) vector bundles is while we degenerate the base curve $X$ to a singular curve. As we have a natural smooth family $\cY \to \cM_{g, n}$, whose fiber over $X$ is the coarse moduli space $\bM_{X}(r, \cO, \ba)$ of parabolic vector bundles, one may obtain the degeneration by constructing a compactification $\overline{\cY}$ of $\cY$ with a morphism $\overline{\cY} \to \overline{\cM}_{g, n}$. For $n = 0$, there have been a number of results along this direction. See \cite[Section 11]{BG19} for the history and references.

Recently, in \cite[Theorem 1.2]{BG19}, Belkale and Gibney constructed a flat family $\cY \to \overline{\cM}_{g}$ of irreducible normal projective varieties whose fiber over a smooth curve is the moduli space of semistable vector bundles. As an application of Theorem \ref{thm:mainthmintro}, we obtain a generalization of their theorem to the moduli space of parabolic bundles.

\begin{theorem}\label{thm:applicationintro}
Fix a parabolic weight $\ba$. There is a flat family $\cY \to \overline{\cM}_{g, n}$ such that
\begin{enumerate}
\item For $X = (C, \bp) \in \cM_{g, n}$, $\cY_{X} \cong \bM_{X}(r, \cO, \ba)$;
\item $\cY_{X}$ is an irreducible normal projective variety for any $X \in \overline{\cM}_{g, n}$.
\end{enumerate}
\end{theorem}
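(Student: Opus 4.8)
The plan is to realize $\cY$ as a relative $\mathrm{Proj}$ of the sheaf of conformal block algebras along the ray determined by $\ba$. The parabolic weight $\ba$ singles out a level $\ell$ and a tuple of dominant weights $\vec{\lambda}$ for which $L := L^{\ell, \vec{\lambda}}$ is ample on $\bM_{X}(r, \cO, \ba)$ for smooth $X$. Over $\overline{\cM}_{g,n}$ the conformal block sheaves $\VV^{\dagger}_{m\ell, m\vec{\lambda}}$ are locally free of rank the Verlinde number, so
\[
  \cR := \bigoplus_{m \ge 0} \VV^{\dagger}_{m\ell, m\vec{\lambda}}
\]
is a sheaf of graded $\cO_{\overline{\cM}_{g,n}}$-algebras whose graded pieces are vector bundles; set $\cY := \mathrm{Proj}_{\overline{\cM}_{g,n}} \cR$ with structure map $\pi$. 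Since relative $\mathrm{Proj}$ commutes with base change and each graded piece is locally free, the fiber is canonically $\cY_{X} \cong \mathrm{Proj} \bigoplus_{m} \VV^{\dagger}_{X, m\ell, m\vec{\lambda}}$. For smooth $X$ this is $\mathrm{Proj}$ of the section ring $\bigoplus_m \rH^0(\bM_X(r,\cO,\ba), L^{\otimes m})$, which recovers $\bM_{X}(r, \cO, \ba)$ because $L$ is ample on this normal projective variety; this yields (1).

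First I would establish that $\cR$ is a finitely generated sheaf of $\cO$-algebras, so that $\pi$ is projective and the projectivity assertion in (2) is automatic. Fiberwise finite generation is exactly Theorem \ref{thm:mainthmintro}: the full algebra $\VV^{\dagger}_{X}$ is finitely generated, hence so is its $\mathbb{Z}_{\ge 0}$-graded slice along the ray $(\ell, \vec{\lambda})$. To globalize, cover the proper Deligne--Mumford stack $\overline{\cM}_{g,n}$ by finitely many \'etale charts; on each Noetherian chart the fiberwise generation in bounded degree, together with local freeness of the graded pieces (so that fiber formation commutes with base change), forces generation of $\cR$ in a single bounded range of degrees, and taking the maximum over the finite cover gives global finite generation.

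Next I would prove flatness of $\pi$. Choosing $N$ so that the $N$-th Veronese of $\cR$ is generated in degree one realizes $\cY \hookrightarrow \PP(\cV)$ for the vector bundle $\cV = \VV^{\dagger}_{N\ell, N\vec{\lambda}}$, with $\cY_{X} \subset \PP(\cV_{X}) = \PP^{\rk \cV - 1}$. For $m \gg 0$ the Hilbert polynomial of $\cY_{X}$ is $m \mapsto \dim \VV^{\dagger}_{X, mN\ell, mN\vec{\lambda}} = \rk \VV^{\dagger}_{mN\ell, mN\vec{\lambda}}$, which is independent of $X$ by local freeness; constancy of the Hilbert polynomial over the reduced base $\overline{\cM}_{g,n}$ then gives $\cO_{\cY}$-flatness.

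The main obstacle is the rest of part (2) over the boundary: irreducibility and normality of $\cY_{X}$ when $X$ is singular. For smooth $X$ these follow from the classical geometry of $\bM_{X}(r, \cO, \ba)$, and the genus-$0$ case is handled in \cite{MY17}. For a nodal $X$ I would argue by induction on the topological type, using the factorization theorem (Theorem \ref{thm:factorization}) to express $\VV^{\dagger}_{X}$ through conformal block algebras on the normalization --- a curve of smaller genus or fewer nodes with additional marked points --- together with propagation (Theorem \ref{thm:propagation}). The delicate points are that factorization controls the algebra only after summing over the inserted weights, so one must show this sum assembles into a normal domain, and that irreducibility is preserved: flatness with constant fiber dimension already shows $\cY_{X}$ is equidimensional of the expected dimension, and I expect to rule out extra components and verify the $R_1$ and $S_2$ conditions by combining the inductive normality of the smaller-type pieces with the vanishing and factorization structure of conformal blocks.
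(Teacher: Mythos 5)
Your construction coincides with the paper's: the paper sets $\cY := \proj \VV_{\ba}^{\dagger}$, where $\VV_{\ba}^{\dagger} = (\VV^{\dagger})_{B}$ is exactly your sheaf $\cR = \bigoplus_{m \ge 0}\VV^{\dagger}_{m\ell, m\vec{\lambda}}$, and your identification of the fibers over $\cM_{g,n}$ via Pauly's ampleness, as well as the flatness argument (locally free graded pieces, hence constant Hilbert polynomial over the reduced base), are sound and consistent with what the paper uses. The genuine gap is in part (2) over the boundary, which is the heart of the theorem: you defer irreducibility and normality of $\cY_{X}$ for nodal $X$ to an induction on topological type in which you ``expect'' to verify $R_{1}$ and $S_{2}$ by hand. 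As set up, this does not close. First, your induction would require normality of conformal block algebras on \emph{nodal} curves of smaller type --- the very statement being proved --- whereas a direct geometric check of $R_{1}$ and $S_{2}$ on $\proj$ of the factorization sum is intractable, because the decomposition in Theorem \ref{thm:factorization} is a priori only one of vector spaces (multiplication mixes the node-weight summands $\rho$), so the sum carries no evident ring-theoretic structure to work with. Second, your proposed route to irreducibility is insufficient: flatness gives equidimensionality of $\cY_{X}$, but equidimensionality does not rule out multiple components of the same dimension.

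The missing idea, which is how the paper proceeds, is to prove the stronger \emph{algebraic} statement (Theorem \ref{thm:finitegenerationsingular}) that $\VV_{X}^{\dagger}$ is a finitely generated \emph{integrally closed domain} for every stable $X$, after which (2) is formal. The two ingredients you lack are: (a) Proposition \ref{prop:factorizationalgebra}, which upgrades factorization to an \emph{algebra} isomorphism of $\VV_{X}^{\dagger}$ onto the graded slice $\bigl(\bigotimes_{j \in J}\VV_{X_{j}}^{\dagger}\bigr)_{K}$ of the tensor product over the components $X_{j}$ of the normalization, where $K$ imposes equal levels and dual weights at the nodes --- this is exactly the ring structure on ``the sum over inserted weights'' that you flagged as delicate but did not supply; and (b) purely algebraic normality transfers: each $\VV_{X_{j}}^{\dagger}$ is the Cox ring of a normal projective variety (or a torus-invariant subring of one, in the genus-one small-$n$ case of Theorem \ref{thm:finitegeneration}), hence integrally closed by \cite{EKW04}; tensor products over $\CC$ of integrally closed domains are integrally closed; and graded slices are torus-invariant subrings, hence finitely generated and integrally closed by Lemma \ref{lem:invariantsubring} and \cite[Proposition 3.1]{Dol03}. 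Then $(\VV_{X}^{\dagger})_{B}$ is a finitely generated normal domain for \emph{all} $X \in \overline{\cM}_{g,n}$ uniformly --- a subring of a domain is a domain, disposing of irreducibility --- and $\proj$ of such a ring is automatically an irreducible normal projective variety, with no induction and no $R_{1}$/$S_{2}$ verification needed.
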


When $n = 0$ (so $\bp = \emptyset$), our flat family is identical to that in \cite{BG19}.

Many compact moduli spaces are constructed as GIT quotients. Thus it depends on a choice of a polarization, which is in many cases not canonical. On the contrary, the flat family in Theorem \ref{thm:applicationintro} does not depend on any auxiliary numerical data other than the parabolic weight. Thus we may regard the flat family in Theorem \ref{thm:applicationintro} as a canonical one.

\subsection{Method of proof}

Conceptually, the procedures of proofs of Theorem \ref{thm:mainthmintro} and Theorem \ref{thm:applicationintro} are simple and straightforward. By \cite{Pau96}, for a smooth pointed curve $X \in \cM_{g, n}$, $\VV_{X}^{\dagger}$ is identified with the \emph{Cox ring} $\Cox(\cM_{X}(r, \cO))$ of the moduli stack of rank $r$ parabolic bundles with trivial determinant. As a first step, in Section \ref{ssec:codimestimation} and \ref{ssec:coxring} we find a parabolic weight $\ba$ such that $\Cox(\cM_{X}(r, \cO)) = \Cox(\bM_{X}(r, \cO, \ba))$ (under the assumption that the number of parabolic points is sufficiently large when $g = 1$). The crucial step (Section \ref{ssec:codimestimation}) is a careful computation of the dimension of the unstable locus in $\cM_{X}(r, \cO)$, following the method of Sun (\cite{Sun00}).

By computing the canonical divisor of $\bM_{X}(r, \cO, \ba)$ in Section \ref{ssec:canonicalbundle}, we prove that $\bM_{X}(r, \cO, \ba)$ is of Fano type. Then by the celebrated result of \cite{BCHM10}, we immediately obtain the finite generation of $\VV_{X}^{\dagger}$ in Section \ref{ssec:finitegeneration}.

We need to extend the result in two directions: 1) when the number of parabolic points is small in the $g = 1$ case and 2) when $X \in \overline{\cM}_{g, n}$ is singular. We employ two functorial properties of conformal blocks, the propagation of vacua (Theorem \ref{thm:propagation}) and the factorization (Theorem \ref{thm:factorization}) to describe $\VV_{X}^{\dagger}$ as a torus-invariant subring of a finitely generated algebra. Then we obtain the finite generation by using Nagata's theorem (\cite[Theorem 3.3]{Dol03}).

The proof of Theorem \ref{thm:applicationintro} is similar. We know that the sheaf of algebras of conformal blocks, defined over $\overline{\cM}_{g, n}$, is fiberwisely finitely generated. At the end of this paper, we describe a sheaf of torus invariant subalgebras over $\overline{\cM}_{g, n}$, whose relative $\proj$ is the flat family $\cY$ in Theorem \ref{thm:applicationintro}.

\subsection{Mori's program}

For a $\QQ$-factorial projective variety $Y$ with trivial irregularity, \emph{Mori's program} is a classification of all birational models of $Y$, which are equivalent or simpler than $Y$ (so-called rational contractions). More precisely, a rational contraction of $Y$ is a normal projective variety $Z$ with a rational map $Y \dashrightarrow Z$, which is a composition of flips, blow-downs, and fibrations, but not blow-ups.

Mori's program consists of three steps:
\begin{enumerate}
\item Study the cone $\Eff(Y) \subset \rN^{1}(Y)_{\RR}$ of effective divisors;
\item For each $D \in \Eff(Y)$, calculate the associated projective model
\begin{eqnarray}\label{eqn:projectivemodel}
	Y(D) := \proj \bigoplus_{m \ge 0}\rH^{0}(Y, \cO(mD));
\end{eqnarray}
\item Describe the associated rational contraction $Y \dashrightarrow Y(D)$.
\end{enumerate}

There are several theoretical obstructions to the completion of Mori's program including the finite generation of the section ring in \eqref{eqn:projectivemodel}. A \emph{Mori dream space} (MDS for short) is a projective variety without such difficulties (\cite{HK00}). Within the proof of Theorem \ref{thm:mainthmintro}, we show that for any general parabolic weight $\ba$, the coarse moduli space $\bM_{X}(r, \cO, \ba)$ is an MDS (Corollary \ref{cor:MDS}). So one may try to complete Mori's program for $\bM_{X}(r, \cO, \ba)$. In Section \ref{sec:Moriprogram}, we run Mori's program for $\bM_{X}(r, \cO, \ba)$.

\subsection{Related works}

Many questions, concerning the finite generation of $\VV_{X, \mathfrak{g}}^{\dagger}$ for other types, finding the set of effective generators, good presentations, cohomological properties such as being Gorenstein remain open except for a few cases. Here we list a few results from literature.

When $\mathfrak{g} = \mathfrak{sl}_{2}$, an explicit finite set of generators is described, for $X = (\PP^{1}, \bp) \in \cM_{0, n}$ by Castravet and Tevelev in \cite{CT06}, for a general smooth curve $X = C \in \cM_{g}$ by Abe in \cite{Abe10}, and for a general $X = (C, \bp) \in \cM_{g, n}$ by Manon in \cite{Man18}. For $\mathfrak{sl}_{3}$ and a general smooth curve $X = (C, \bp)$ with genus $\le 1$, Manon describes a similar explicit generating set in \cite{Man13}. Abe and Manon both use a degeneration of $\VV_{X}^{\dagger}$ to a singular curve.

Finally, the forementioned results of Manon also prove that $\VV_{X, \mathfrak{sl}_{r}}^{\dagger}$ is Gorenstein when $r \le 3$ and $X = (C, \bp)$ is a general pointed smooth curve.

\subsection{Some questions}

It is a very interesting problem to describe the degeneration of $\bM_{X}(r, \cO, \ba)$ in Theorem \ref{thm:applicationintro} as a moduli space of natural objects such as generalized parabolic sheaves on a singular curve.

\begin{question}
For $X \in \overline{\cM}_{g, n}\setminus \cM_{g, n}$, construct a projective moduli space of natural objects isomorphic to $\cY_{X}$ in Theorem \ref{thm:applicationintro}.
\end{question}

When $n = 0$, there has been an attempt to describe the flat limit as a moduli space of limit semistable bundles (\cite{Oss16}). See \cite[Section 11]{BG19} for a discussion.

We expect that the main result of this paper and the outline of the proof can be generalized to the other type of simple Lie algebras. We leave this to interested readers.

\begin{conjecture}
Let $\mathfrak{g}$ be a simple Lie algebra. For any stable curve $X \in \overline{\cM}_{g, n}$, the algebra $\VV_{X, \mathfrak{g}}^{\dagger}$ of conformal blocks is finitely generated.
\end{conjecture}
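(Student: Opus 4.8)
The plan is to follow, step by step, the type A strategy outlined in the method of proof, replacing $\SL_{r}$ (equivalently $\mathfrak{sl}_{r}$) throughout by the simply connected group $G$ with Lie algebra $\mathfrak{g}$. For a smooth pointed curve $X \in \cM_{g,n}$, the uniformization theorem and the identification of conformal blocks with spaces of generalized theta functions (\cite{BL94}, \cite{Fal94}, \cite{KNR94}, \cite{LS97}, \cite{Pau96}) hold for an arbitrary simple $G$, not only in type A. These identify $\VV_{X,\mathfrak{g}}^{\dagger}$ with the Cox ring $\Cox(\cM_{X}(G)) = \bigoplus_{L}\rH^{0}(\cM_{X}(G), L)$ of the moduli stack $\cM_{X}(G)$ of parabolic principal $G$-bundles, the sum being over the Picard group $\Pic(\cM_{X}(G))$. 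Since $\Pic(\cM_{X}(G))$ is finitely generated for every simple $G$, the problem becomes one of finite generation of this Cox ring.

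Next I would descend from the stack to a coarse space. As in Section \ref{ssec:codimestimation}, the aim is to produce a parabolic weight $\ba$ (and, in low genus, enough marked points) for which the non-free and unstable loci in $\cM_{X}(G)$ have codimension at least two, so that $\Cox(\cM_{X}(G)) = \Cox(\bM_{X}(G, \ba))$ for the coarse moduli space $\bM_{X}(G, \ba)$ of semistable parabolic $G$-bundles. This requires upgrading Sun's dimension estimate for the unstable locus (\cite{Sun00}) from $\SL_{r}$ to an arbitrary root system: one stratifies the unstable locus by Harder--Narasimhan/Kempf type and bounds the dimension of each stratum in terms of the combinatorics of parabolic subgroups of $G$. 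With the coarse space in hand, I would compute its canonical divisor as in Section \ref{ssec:canonicalbundle}; the anticanonical class of $\bM_{X}(G, \ba)$ is, up to parabolic and boundary corrections, a positive multiple --- governed by the dual Coxeter number $h^{\vee}$ of $\mathfrak{g}$ --- of the ample generator, and the goal is to conclude that $\bM_{X}(G, \ba)$ is of Fano type. Finite generation of the Cox ring, hence of $\VV_{X,\mathfrak{g}}^{\dagger}$ for these $X$ and $\ba$, then follows from \cite{BCHM10}.

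Finally I would remove the auxiliary hypotheses exactly as in the type A case. Both functorial inputs used in the extension --- the propagation of vacua (Theorem \ref{thm:propagation}) and the factorization theorem (Theorem \ref{thm:factorization}) --- are valid for every simple Lie algebra $\mathfrak{g}$. Using them one writes $\VV_{X,\mathfrak{g}}^{\dagger}$, for an arbitrary weight, for the remaining small-point genus one cases, and for a singular $X \in \overline{\cM}_{g,n}$, as a torus-invariant subalgebra of an already finitely generated algebra of conformal blocks over a curve with more marked points or over the normalization of $X$; Nagata's theorem (\cite[Theorem 3.3]{Dol03}) then yields finite generation in full generality.

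The main obstacle is the Fano type step for general $G$. For $\mathfrak{sl}_{r}$ the moduli space has mild (log terminal) singularities and the canonical bundle computation cleanly produces a big anticanonical divisor, but for non-simply-laced and exceptional $\mathfrak{g}$ the coarse moduli space of semistable $G$-bundles can have substantially worse singularities, and both the klt property and the effectivity and bigness of $-K$ must be re-established. Securing these two ingredients --- a uniform klt estimate and a uniform lower bound making $-K$ big --- is where the real work lies; the Cox ring identification, the codimension estimate, and the functorial reductions, while technical, are conceptually parallel to the type A arguments and should go through with group-theoretic bookkeeping.
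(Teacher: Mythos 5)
You should first be aware of an unusual feature of this assignment: the statement you were asked to prove is not a theorem of the paper but its closing \emph{Conjecture}, which the authors explicitly leave open (``We expect that the main result of this paper and the outline of the proof can be generalized to the other type of simple Lie algebras. We leave this to interested readers.''). So there is no proof in the paper to compare against, and your proposal is in fact a faithful expansion of exactly the roadmap the authors have in mind: identify $\VV_{X,\mathfrak{g}}^{\dagger}$ with $\Cox(\cM_{X}(G))$ via the analogues of Theorem \ref{thm:Paulystack} for general simply connected $G$ (\cite{BL94}, \cite{Fal94}, \cite{KNR94}, \cite{LS97}), descend to a coarse space via a codimension estimate as in Section \ref{ssec:codimestimation}, prove Fano type via the canonical bundle as in Section \ref{ssec:canonicalbundle} and conclude by \cite{BCHM10}, then remove hypotheses using Theorems \ref{thm:propagation} and \ref{thm:factorization} together with Lemma \ref{lem:invariantsubring}. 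As a reading of the intended strategy, your sketch is accurate and correctly identifies where the Lie-theoretic inputs enter.

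But as a proof it has genuine gaps, and to your credit you flag the main one yourself. Concretely: (i) the analogue of Proposition \ref{prop:codimestimation} is not a matter of bookkeeping --- Sun's estimate rests on a type A combinatorial inequality (\cite[Lemma 5.2]{Sun00}) comparing Schubert codimensions in $\GL$-flag bundles with the instability defect, and for a general root system one must redo this for Harder--Narasimhan strata indexed by parabolic subgroups of $G$, where no such inequality is in the literature; (ii) the Fano type step requires not just computing $-K$ (where $2h^{\vee}$ replaces $2r$) but establishing that the coarse space has klt singularities --- in type A with general $\ba$ the coarse space is smooth because stable parabolic bundles have only scalar automorphisms, whereas stable $G$-bundles can have automorphism groups strictly larger than the center, so the best one can hope for is klt quotient singularities, and verifying this (and the descent criterion replacing Pauly's divisibility-by-$r$ condition, which now involves the center $Z(G)$) is open work; (iii) the vanishing argument of Step 3 of Proposition \ref{prop:coxringidentification}, which shows $\VV_{X}^{\dagger}$ exhausts the full Cox ring, uses the explicit $\ZZ/r$-grading by $|\vec{\lambda}| \bmod r$ and must be reformulated via $Z(G)$-charges; and (iv) the Mori-theoretic bridge (the analogue of Lemma \ref{lem:projectivemodel}, identifying all projective models with moduli for other weights) presupposes a variation-of-GIT/wall-crossing theory for parabolic $G$-bundles that is not established in the generality needed. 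Your final paragraph honestly concedes that the klt estimate and the bigness of $-K$ are ``where the real work lies''; since those are precisely the load-bearing steps, the proposal should be regarded as a credible research program for the conjecture --- essentially the one the authors propose --- rather than a proof of it.
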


\subsection{Notation and conventions}

We work on an algebraically closed field $\CC$ of characteristic zero. To minimize the introduction of cumbersome notation, we discuss parabolic bundles with full flags only. An interested reader may generalize most parts of the paper to the partial flag cases. In many papers the dual $\VV_{X}$ of $\VV_{X}^{\dagger}$ has been denoted by the space of conformal blocks. All moduli stacks are defined over fppf topology.

\subsection*{Acknowledgement}

We would like to express our gratitude to Prakash Belkale and the anonymous referees for many suggestions. The first author would also like to thank the Institute for Advanced Study, where most of this work was done, for its hospitality and excellent research environment. The first author was partially supported by the Minerva Research Foundation.

\section{Moduli space of parabolic bundles and conformal blocks}

The purpose of this section is an introduction of notation and some well known results.

\subsection{Moduli stacks of parabolic vector bundles}

Let $C$ be a connected reduced projective curve of arithmetic genus $g$. Let $\bp = (p^{i})_{1 \le i \le n}$ be a collection of $n$ distinct smooth points on $C$. In this paper, $X = (C, \bp)$ denotes a pointed curve. We focus on the stable pointed curves, so $X = (C, \bp) \in \overline{\cM}_{g, n}$. We allow the non-pointed case, that is, $n = 0$ (and $\bp = \emptyset$).

\begin{definition}\label{def:parbundle}
Let $X = (C, \bp) \in \overline{\cM}_{g, n}$. A \emph{parabolic bundle} over $X$ of rank $r$ is a collection of data $\cE = (E, \{W_{\bullet}^{i}\})$ where
\begin{enumerate}
\item $E$ is a rank $r$ vector bundle on $C$;
\item For each $1 \le i \le n$, $W_{\bullet}^{i}$ is a strictly increasing sequence $0 \subsetneq W_{1}^{i} \subsetneq W_{2}^{i} \subsetneq \cdots \subsetneq W_{r-1}^{i} \subsetneq W_{r}^{i} = E|_{p^{i}}$ of subspaces of $E|_{p^{i}}$. In other words, $W_{\bullet}^{i} \in \Fl(E|_{p^{i}})$. Note that $\dim W_{j}^{i} = j$.
\end{enumerate}
\end{definition}

\begin{definition}\label{def:stackofparbundles}
Let $\underline{\cM}_{X}(r, d)$ be the moduli stack of parabolic bundles over $X$, whose underlying vector bundle is of rank $r$ and degree $d$.
\end{definition}

The moduli stack $\underline{\cM}_{X}(r, d)$ is a non-separated algebraic stack (not of finite type). When $n = 0$, so if $X = C$, $\underline{\cM}_{C}(r, d)$ is smooth (\cite[Section 6]{Wan11}). For $n > 0$ and $X = (C, \bp)$, there is a natural forgetful map
\begin{eqnarray*}
	f : \underline{\cM}_{X}(r, d) &\to& \underline{\cM}_{C}(r, d)\\
	\cE = (E, \{W_{\bullet}^{i}\}) & \mapsto & E
\end{eqnarray*}
and $f$ is a smooth morphism because each fiber of $f$ is the product $\Fl(V)^{n}$ of flag varieties for an $r$-dimensional vector space $V$. Thus $\underline{\cM}_{X}(r, d)$ is also smooth.

There is another functorial morphism
\begin{eqnarray*}
	\det : \underline{\cM}_{X}(r, d) &\to& \Pic^{d}(C)\\
	\cE = (E, \{W_{\bullet}^{i}\}) & \mapsto & \det E.
\end{eqnarray*}

\begin{definition}
For $L \in \Pic^{d}(C)$, let $\underline{\cM}_{X}(r, L)$ be the fiber $\det^{-1}(L)$. In other words, $\underline{\cM}_{X}(r, L)$ is the moduli stack of parabolic bundles with a fixed determinant $L$.
\end{definition}

There is an open substack of $\underline{\cM}_{X}(r, L)$ with a $\CC^{*}$-gerbe structure because any object $(E, \{W_{\bullet}^{i}\})$ has at least one dimensional automorphism group given by dilations. To reduce this gerbe structure, we introduce a rigidified stack.

\begin{definition}
Let $\cM_{X}(r, L)$ be the moduli stack of data $(E, \{W_{\bullet}^{i}\}, \phi)$ where $(E, \{W_{\bullet}^{i}\})$ is a rank $r$ parabolic bundle with determinant $L$ and $\phi : \det E \stackrel{\cong}{\to} L$.
\end{definition}

For any $L \in \Pic^{0}(C)$, we have $\underline{\cM}_{X}(r, L) \cong \underline{\cM}_{X}(r, \cO)$ because if we set $M = \sqrt[r]{L^{-1}} \in \Pic^{0}(C)$, we have an isomorphism
\begin{eqnarray*}
	\underline{\cM}_{X}(r, L) &\to& \underline{\cM}_{X}(r, \cO)\\
	\cE = (E, \{W_{\bullet}^{i}\}) &\mapsto & (E \otimes M, \{W_{\bullet}^{i}\}).
\end{eqnarray*}
A similar argument shows that $\cM_{X}(r, L) \cong \cM_{X}(r, \cO)$ for any $L \in \Pic^{0}(C)$.

\subsection{Coarse moduli spaces of parabolic bundles}

In this section, let $X = (C, \bp) \in \cM_{g, n}$ be a smooth pointed curve. Most of moduli stacks in the previous section are neither separated nor of finite type. To obtain a projective coarse moduli space, we need to impose a stability condition. In contrast to the case of ordinary vector bundles, where the slope-stability is the standard choice, there are potentially infinitely many different ways to define the stability, which depend on numerical data.

\begin{definition}\label{def:parweight}
Fix $n \in \ZZ_{\ge 0}$. A \emph{parabolic weight} is a collection of data $\ba = (a_{\bullet}^{1}, a_{\bullet}^{2}, \cdots, a_{\bullet}^{n})$ where each $a_{\bullet}^{i} = (1 > a_{1}^{i} > a_{2}^{i} > \cdots > a_{r-1}^{i} > a_{r}^{i} = 0)$ is a strictly decreasing sequence of non-negative rational numbers.
\end{definition}

\begin{remark}
We may assume that $a_{r}^{i} = 0$ in Definition \ref{def:parweight} by the normalization trick (\cite[Remark 2.5]{MY17}).
\end{remark}

Intuitively, for a parabolic bundle $(E, \{W_{\bullet}^{i}\})$, $a_{j}^{i}$ has the role of `weight' of $W_{j}^{i} \subset E|_{p^{i}}$.

The space $\cW^{0}$ of all parabolic weights is the interior of a closed polytope $\Delta_{r-1}^{n}$ where $\Delta_{r-1}$ is an $(r-1)$-dimensional simplex. Indeed, if we set $d_{j} := a_{j}^{i}-a_{j+1}^{i}$ and set $a_{0}^{i} = 1$, then the space of all sequences $a_{\bullet}^{i}$'s is $\{(d_{j})_{0 \le j \le r-1} \in \RR^{r} \;|\; 0 < d_{j} < 1, \sum_{j = 0}^{r-1}d_{j} = 1\}$, which is the interior of $\Delta_{r-1}$. Let $\cW := \Delta_{r-1}^{n}$ be the closure of $\cW^{0}$.

\begin{definition}	
Let $\cE := (E, \{W_{\bullet}^{i}\})$ be a parabolic bundle of rank $r$ and $\ba$ be a parabolic weight.
\begin{enumerate}
\item The \emph{parabolic degree} of $\cE$ with respect to $\ba$ is
\[
	\mathrm{pdeg}_{\ba} \cE := \deg E
	+ \sum_{i=1}^{n}\sum_{j=1}^{r}a_{j}^{i}.
\]
\item The \emph{parabolic slope} of $\cE$ with respect to $\ba$ is
\[
	\mu_{\ba}(\cE) := \frac{\mathrm{pdeg}_{\ba} \cE}{r}.
\]
\end{enumerate}
\end{definition}

Let $\cE := (E, \{W_{\bullet}^{i}\})$ be a parabolic bundle and let $\ba$ be a parabolic weight. Let $F \subset E$ be a subbundle. There is a natural induced flag structure $W|_{F \bullet}^{i}$ on $F|_{p^{i}}$ as follows. Let $\ell$ be the smallest index such that $\dim (W_{\ell}^{i} \cap F|_{p^{i}}) = j$. Then $W|_{F_{j}}^{i} := W_{\ell}^{i} \cap F|_{p^{i}}$. Furthermore, we may define the induced parabolic weight $\bfb = (b_{\bullet}^{i})$ where $b_{j}^{i} := a_{\ell}^{i}$. Thus we obtain a parabolic bundle $\cF = (F, \{W|_{F \bullet}^{i}\})$ with a parabolic weight $\bfb$. $\cF$ is called a parabolic subbundle.

\begin{definition}
A parabolic bundle $\cE = (E, \{W_{\bullet}^{i}\})$ is \emph{$\ba$-(semi-)stable} if for any parabolic bundle $\cF$, $\mu_{\bfb}(\cF) (\le) < \mu_{\ba}(\cE)$.
\end{definition}

\begin{definition}
Let $\ba$ be a parabolic weight. Let $\cM_{X}(r, L, \ba)$ be the open substack of $\cM_{X}(r, L)$ parametrizing $\ba$-semistable parabolic bundles.
\end{definition}

From now on, we exclusively work on the $L = \cO$ case.

The space $\cW^{0}$ of all parabolic weights have a finite chamber structure given by a finitely many hyperplanes. If there is a strictly semistable parabolic bundle $\cE = (E, \{W_{\bullet}^{i}\})$, then there is a unique maximal destabilizing subbundle $\cF = (F, \{W|_{F \bullet}^{i}\})$ such that $\mu_{\bfb}(\cF) = \mu_{\ba}(\cE)$. If $\mathrm{rank}\; F = s$ and $\deg F = d$, then there are subsets $J^{i} \subset [r]$ with $|J^{i}| = s$ such that
\[
	\frac{d + \sum_{i=1}^{n}\sum_{j \in J^{i}}a_{j}^{i}}{s} =
	\frac{\sum_{i=1}^{n}\sum_{j=1}^{r}a_{j}^{i}}{r},
\]
which is an affine hyperplane (denoted by $H(s, d, \{J^{i}\})$) on $\RR^{n(r-1)} = \{(a_{j}^{i})_{1 \le j \le r-1, 1 \le i \le n}\}$. If we pick a point $\ba$ on the complement of the union of such hyperplanes, then the stability coincides with the semistability. In this case, we say $\ba$ is \emph{general}. Since $\Delta_{r-1}^{n}$ is compact, there are only finitely many $H(s, d, \{J^{i}\})$'s which intersect $\Delta_{r-1}^{n}$. For a connected component of the complement, the (semi-)stability does not change. Thus, there are only finitely many essentially different stability conditions.

For $g \ge 1$, $\cM_{X}(r, \cO, \ba)$ is always nonempty (Corollary \ref{cor:nonempty}). However, when $g = 0$, for some weight $\ba$, the moduli stack $\cM_{X}(r, \cO, \ba)$ might be empty. We say $\ba$ is \emph{effective} if $\cM_{X}(r, \cO, \ba) \ne \emptyset$. The subset of effective weights in $\cW^{0}$ is described in \cite[Section 6.2]{MY17}.

Suppose that $\ba$ is general and effective. Then $\cM_{X}(r, \cO, \ba)$ is a proper Deligne-Mumford stack. Mehta and Seshadri construct its coarse moduli space in \cite{MS80}, which is smooth and projective.

\begin{definition}
Let $\bM_{X}(r, \cO, \ba)$ be the coarse moduli space of $\cM_{X}(r, \cO, \ba)$.
\end{definition}

\begin{remark}
When $\ba$ is not general (but effective), $\cM_{X}(r, \cO, \ba)$ has a good moduli space in the sense of \cite{Alp13}. We call this good moduli space as $\bM_{X}(r, \cO, \ba)$, too.
\end{remark}

In summary, we have the following diagram:
\begin{equation}\label{eqn:stackdiagram}
	\xymatrix{\cM_{X}(r, \cO, \ba) \ar@^{(->}[r]^{\iota} \ar[d]^{p}
	& \cM_{X}(r, \cO)\\
	\bM_{X}(r, \cO, \ba)}
\end{equation}
Here $\iota$ is an inclusion of the stack, $p$ is the structure morphism to the coarse moduli space.

\subsection{Wall-crossing}\label{ssec:wallcrossing}

The wall-crossing is a change of the moduli space $\bM_{X}(r, \cO, \ba)$ that appears when $\ba$ varies across a wall $H(s, d, \{J^{i}\})$.

Consider a general point $\ba \in H(s, d, \{J^{i}\})$ and a small open neighborhood of $\ba$ divided into two pieces by the wall. Let $H(s, d, \{J^{i}\})^{+}$ and $H(s, d, \{J^{i}\})^{-}$ be the two connected components such that
\[
	\frac{d + \sum_{i=1}^{n}\sum_{j \in J^{i}}a_{j}^{i}}{s} >
	\frac{\sum_{i=1}^{n}\sum_{j=1}^{r}a_{j}^{i}}{r}
	\quad
	\mbox{and}
	\quad
	\frac{d + \sum_{i=1}^{n}\sum_{j \in J^{i}}a_{j}^{i}}{s} <
	\frac{\sum_{i=1}^{n}\sum_{j=1}^{r}a_{j}^{i}}{r},
\]
respectively. Let $\ba^{+}$ (resp. $\ba^{-}$) be a point on $H(s, d, \{J^{i}\})^{+}$ (resp. $H(s, d, \{J^{i}\})^{-}$).

There are two functorial morphisms (\cite[Theorem 3.1]{BH95}, \cite[Section 7]{Tha96})
\[
	\xymatrix{\bM_{X}(r, \cO, \ba^{-}) \ar[rd]^{\phi^{-}} &&
	\bM_{X}(r, \cO, \ba^{+}) \ar[ld]_{\phi^{+}}\\
	& \bM_{X}(r, \cO, \ba).}
\]
Let $Y \subset \bM_{X}(r, \cO, \ba)$ be the locus such that one of $\phi^{\pm} : Y^{\pm} := {\phi^{\pm}}^{-1}(Y) \to Y$ is not an isomorphism. Then $\bM_{X}(r, \cO, \ba^{-}) \setminus Y^{-} \cong \bM_{X}(r, \cO, \ba) \setminus Y \cong \bM_{X}(r, \cO, \ba^{+}) \setminus Y^{+}$.

\begin{proposition}[(\protect{\cite[Section 7]{Tha96}})]\label{prop:wallcrossing}
The blow-up of $\bM_{X}(r, \cO, \ba^{-})$ along $Y^{-}$ is isomorphic to the blow-up of $\bM_{X}(r, \cO, \ba^{+})$ along $Y^{+}$.
\end{proposition}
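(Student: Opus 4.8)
The plan is to realize both $\bM_{X}(r, \cO, \ba^{+})$ and $\bM_{X}(r, \cO, \ba^{-})$ as GIT quotients of a single parameter scheme, so that crossing the wall $H(s, d, \{J^{i}\})$ becomes a variation of the linearization, and then to invoke the master-space picture for a GIT flip. First I would fix a sufficiently positive twist and take the locally closed subscheme $R$ of a Quot scheme parametrizing quotients $\cO_{C}^{N} \twoheadrightarrow E$ of rank $r$ with $\det E \cong \cO$, decorated with a flag $W_{\bullet}^{i}$ of $E|_{p^{i}}$ at each marked point; a reductive group $G = \SL_{N}$ acts on $R$, and a parabolic weight $\ba$ determines a $G$-linearized ample line bundle $L_{\ba}$ whose Hilbert--Mumford (semi)stability reproduces the Mehta--Seshadri parabolic (semi)stability. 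Then $R \git_{L_{\ba}} G \cong \bM_{X}(r, \cO, \ba)$, and since the assignment $\ba \mapsto L_{\ba}$ is affine-linear, the chamber decomposition of $\cW^{0}$ cut out by the hyperplanes $H(s, d, \{J^{i}\})$ pulls back to the wall-and-chamber decomposition of the $G$-ample cone inside $\rN^{1}(R)$.

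Given this dictionary, the proposition becomes a special case of variation of GIT. Since $\ba^{+}$ and $\ba^{-}$ lie in adjacent chambers separated by the single wall through $\ba$, the linearizations $L_{\ba^{\pm}}$ sit on opposite sides of the corresponding GIT wall, with $L_{\ba}$ on the wall; the three quotients then organize into the diagram with the contractions $\phi^{\pm}$ already recorded above. By Thaddeus the induced birational map is a flip, and its two sides are resolved by a common master space. Concretely, $Y \subset \bM_{X}(r, \cO, \ba)$ is the image of the polystable locus, parametrizing $\cF \oplus \cQ$ with $\cF$ the destabilizing parabolic subbundle of numerical type $(s, d, \{J^{i}\})$ and $\cQ$ the complementary quotient; over the product $B$ of the moduli of $\cF$ and of $\cQ$, the fibres $Y^{-}$ and $Y^{+}$ are the projective bundles $\PP(\Ext^{1}(\cQ, \cF))$ and $\PP(\Ext^{1}(\cF, \cQ))$ classifying the nonsplit extensions $0 \to \cF \to \cE \to \cQ \to 0$ (stable at $\ba^{-}$) and $0 \to \cQ \to \cE \to \cF \to 0$ (stable at $\ba^{+}$), respectively. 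Blowing up $\bM_{X}(r, \cO, \ba^{-})$ along $Y^{-}$ and $\bM_{X}(r, \cO, \ba^{+})$ along $Y^{+}$ produces in each case the master space, whose exceptional divisor is the fibre product of these two projective bundles over $B$; identifying both blow-ups with the master space gives the asserted isomorphism.

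The hard part is not the abstract flip formalism, which I would simply cite from \cite{BH95} and \cite{Tha96}, but the two pieces of the dictionary that feed it. The first is the verification that parabolic stability coincides with GIT stability for $L_{\ba}$ and that the numerical walls match: one must keep careful track of the contribution of the flag data to the Hilbert--Mumford weights, so that a one-parameter subgroup adapted to a subbundle of type $(s, d, \{J^{i}\})$ has vanishing weight exactly on $H(s, d, \{J^{i}\})$. The second is the identification of $Y^{\pm}$ with the stated projective bundles, which requires a Luna slice (equivalently, a deformation-theoretic normal-cone) computation at a polystable point $\cF \oplus \cQ$: one shows the relevant normal directions are governed by $\Ext^{1}(\cF, \cQ) \oplus \Ext^{1}(\cQ, \cF)$, with the two summands becoming stable on opposite sides of the wall. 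Once these are in place the statement follows verbatim, so I would organize the write-up around the weight-to-linearization correspondence and cite the VGIT machinery for the remainder.
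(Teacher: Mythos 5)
The paper does not actually prove this proposition --- it is quoted directly from Thaddeus \cite[Section 7]{Tha96} (with the morphisms $\phi^{\pm}$ coming from \cite[Theorem 3.1]{BH95}) --- and your proposal is a faithful reconstruction of exactly that cited VGIT argument: parabolic stability realized as GIT stability on a decorated Quot scheme, adjacent chambers producing a flip, $Y^{\pm}$ identified as projective bundles of parabolic extension spaces over the product of the two smaller moduli spaces, and the two blow-ups identified with the common resolution (the fibered product over $\bM_{X}(r, \cO, \ba)$). So your approach is essentially the same as the paper's; the only points to make explicit in a full write-up are that the $\Ext^{1}$'s must be taken in the category of parabolic bundles, and that the residual $\CC^{*}$-stabilizer at a polystable point $\cF \oplus \cQ$ acts with weights $\pm 1$ on the two normal summands, which is the hypothesis in Thaddeus's general theorem ensuring the centers are smooth and the modifications are honest rather than weighted blow-ups.
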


\subsection{Line bundles on moduli stack of parabolic bundles}

Let $X = (C, \bp) \in \cM_{g, n}$ be a smooth pointed curve. There are two ways to construct the moduli stack $\cM_{X}(r, \cO)$. One way is to construct it as a limit of finite type quotient stacks and will be presented in Section \ref{sec:codimension}. Here we describe $\cM_{X}(r, \cO)$ as a double quotient stack. Pick $q \in C \setminus \bp$. The formal neighborhood of $q$ in $C$ can be identified with $\spec \CC((z))$. $C \setminus q$ is a smooth affine variety, so it is isomorphic to $\spec A_{C}$ for some finitely generated $\CC$-algebra $A_{C}$.

\begin{theorem}[(Uniformization theorem, \protect{\cite[Proposition 4.2]{Pau96}})]\label{thm:uniformization}
The moduli stack $\cM_{X}(r, \cO)$ is canonically isomorphic to the quotient stack
\[
	\SL_{r}(A_{C})\backslash \left(\SL_{r}(\CC((z)))/\SL_{r}(\CC[[z]])
	\times \Fl(\CC^{r})^{n}\right).
\]
\end{theorem}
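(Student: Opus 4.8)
The plan is to reduce the statement to the classical uniformization of the moduli stack of $\SL_r$-bundles and then to account for the parabolic flags separately. The two geometric inputs I would rely on are the Beauville--Laszlo gluing theorem, which describes a bundle on $C$ in terms of its restrictions to $C \setminus q$ and to the formal disk $\spec \CC[[z]]$ together with a gluing isomorphism on the punctured disk $\spec \CC((z))$, and the triviality of $\SL_r$-bundles over the affine curve $C \setminus q = \spec A_C$ and over the formal disk. The second input is where the hypothesis that the structure group is semisimple and simply connected is essential: it is what allows a bundle with trivialized determinant to be trivialized on both the inside and the outside.

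First I would treat the non-parabolic case, constructing the isomorphism on underlying $\SL_r$-bundles. Given a coset $g\,\SL_r(\CC[[z]]) \in \SL_r(\CC((z)))/\SL_r(\CC[[z]])$, I build a bundle on $C$ by gluing the trivial bundle on $C \setminus q$ to the trivial bundle on $\spec \CC[[z]]$ along the automorphism $g$ of the trivial bundle on $\spec \CC((z))$. Conversely, given an $\SL_r$-bundle $E$ with trivialized determinant, triviality over $C\setminus q$ and over the disk produces trivializations of $E|_{C\setminus q}$ and of $E|_{\spec \CC[[z]]}$; comparing them on $\spec \CC((z))$ yields an element $g \in \SL_r(\CC((z)))$. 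The trivialization on $C \setminus q$ is unique up to the left action of $\SL_r(A_C) = \mathrm{Aut}(\cO^{\oplus r}|_{C\setminus q})$, and the trivialization on the disk is unique up to the right action of $\SL_r(\CC[[z]])$, so $E$ is recovered precisely as the double coset $\SL_r(A_C)\backslash \SL_r(\CC((z)))/\SL_r(\CC[[z]])$.

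To upgrade this bijection on points to an isomorphism of algebraic stacks, I would run the same argument for families parametrized by an arbitrary $\CC$-scheme $S$. The relative Beauville--Laszlo theorem handles the gluing in families, so the only nontrivial point is the existence of the trivializations. Here I would invoke the Drinfeld--Simpson theorem: after an fppf (indeed \'etale) base change on $S$, any family of $\SL_r$-bundles becomes trivial on $(C\setminus q)\times S$, and it is automatically trivial on the relative formal disk. This descent statement is what makes the two functors agree, and it is the step I expect to be the main obstacle, since it is the one place where genuine geometry, rather than the bookkeeping of loop-group cosets, enters. The fppf setting matches the convention fixed for our moduli stacks in the notation.

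Finally I would incorporate the parabolic flags. Each marked point $p^i$ lies in $C \setminus q$, so the chosen trivialization of $E|_{C\setminus q}$ identifies the fiber $E|_{p^i}$ with $\CC^r$, and a full flag $W_\bullet^i \in \Fl(E|_{p^i})$ becomes a point of $\Fl(\CC^r)$. This identification depends on the trivialization only through the residual $\SL_r(A_C)$-ambiguity, which acts on the flags through the evaluation homomorphism $\SL_r(A_C) \to \SL_r(\CC)^n$, $s \mapsto (s(p^1), \dots, s(p^n))$. Combining the flag data with the affine-Grassmannian description of the underlying bundle and taking the simultaneous (diagonal) $\SL_r(A_C)$-quotient yields exactly
\[
	\SL_r(A_C)\backslash\left(\SL_r(\CC((z)))/\SL_r(\CC[[z]]) \times \Fl(\CC^r)^n\right),
\]
as claimed. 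The canonicity of the isomorphism follows because every choice made --- the gluing, the trivializations, the evaluation maps --- is functorial in $S$ and equivariant for the relevant group actions.
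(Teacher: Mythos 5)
Your proposal is correct and follows essentially the same route as the proof the paper relies on: Theorem \ref{thm:uniformization} is quoted from \cite[Proposition 4.2]{Pau96} without an internal proof, and that argument (building on \cite[Sections 1 and 3]{BL94} and \cite{LS97}) is precisely your combination of Beauville--Laszlo gluing, fppf-local triviality of a family over $(C\setminus q)\times S$ and over the relative formal disk, and the identification of the flags $W_{\bullet}^{i} \in \Fl(E|_{p^{i}})$ with points of $\Fl(\CC^{r})^{n}$ via the trivialization, with the residual $\SL_{r}(A_{C})$-ambiguity acting diagonally through evaluation at the $p^{i}$. Two cosmetic remarks: for rank-$r$ bundles with trivialized determinant the local triviality over the affine curve can be had directly (a vector bundle on a smooth affine curve splits as $\cO^{r-1}\oplus \det$, so trivial determinant forces triviality, relativized over $S$ as in \cite{BL94}), making the appeal to Drinfeld--Simpson valid but stronger than needed, and the trivialization over the relative formal disk holds only locally on $S$ rather than automatically, which is all the stack-theoretic argument requires.
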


See also \cite[Sections 1 and 3]{BL94} for the geometric intuition.

The Picard group $\mathrm{Pic}(\Fl(\CC^{r}))$ is isomorphic to $\ZZ^{r-1}$. By the Borel-Weil theorem, for any integer partition $\lambda = (\lambda_{1}\ge \lambda_{2} \ge \cdots \ge \lambda_{r-1}\ge 0)$, there is a unique line bundle $F_{\lambda} \in \mathrm{Pic}(\Fl(\CC^{r}))$ such that $\rH^{0}(\Fl(\CC^{r}), F_{\lambda})$ is an irreducible $\SL_{r}$-representation $V_{\lambda}$ associated to the partition $\lambda$. The pull-back of $F_{\lambda}$ by $\SL_{r}(\CC((z)))/\SL_{r}(\CC[[z]]) \times \Fl(\CC^{r})^{n} \to \Fl(\CC^{r})^{n} \stackrel{\pi_{i}}{\to} \Fl(\CC^{r})$ descends to $\cM_{X}(r, \cO)$ and gives a line bundle $F_{i, \lambda}$ on $\cM_{X}(r, \cO)$.

For any family of vector bundles $E$ parametrized by a $\CC$-scheme $S$, we may construct the \emph{determinant line bundle} $L_{S} := \det R^{1}\pi_{S *}E \otimes (\det \pi_{S *}E)^{-1}$, where $\pi_{S} : C \times S \to S$ is the projection. The collection of data $L_{S}$ forms a line bundle $\cL$ on $\cM_{X}(r, \cO)$. Indeed, $\cL$ and $F_{i, \lambda}$'s generate the Picard group of the moduli stack.

\begin{theorem}[(\protect{\cite[Theorem 1.1]{LS97}})]\label{thm:Picstack}
The Picard group $\mathrm{Pic}(\cM_{X}(r, \cO))$ is isomorphic to
\[
	\ZZ \cL \times \prod_{i=1}^{n}\Pic(\Fl(\CC^{r})) \cong \ZZ^{(r-1)n+1}.
\]
In particular, $\Pic(\cM_{X}(r, \cO))$ is freely generated by $\{\cL, F_{i, \omega_{j}}\}_{1 \le i \le n, 1 \le j \le r-1}$ where $\omega_{j}$ is the $j$-th fundamental weight.
\end{theorem}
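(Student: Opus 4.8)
The plan is to combine the uniformization theorem (Theorem~\ref{thm:uniformization}) with the computation of the Picard group of the affine Grassmannian. By Theorem~\ref{thm:uniformization}, $\cM_X(r,\cO) \cong [\SL_r(A_{C})\backslash(\cQ\times\Fl(\CC^r)^n)]$, where $\cQ := \SL_r(\CC((z)))/\SL_r(\CC[[z]])$. Since the Picard group of a quotient stack $[H\backslash Y]$ is the $H$-equivariant Picard group $\Pic^{H}(Y)$, I would reduce the problem to two tasks: first computing the ordinary Picard group of $Y := \cQ\times\Fl(\CC^r)^n$, and then showing that every line bundle on $Y$ carries a unique $\SL_r(A_{C})$-linearization, so that $\Pic^{\SL_r(A_{C})}(Y)\cong\Pic(Y)$.

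For the ordinary Picard group, the two factors are handled separately. By Borel--Weil, $\Pic(\Fl(\CC^r))\cong\ZZ^{r-1}$, freely generated by the $F_{\omega_{j}}$, whence $\Pic(\Fl(\CC^r)^n)\cong(\ZZ^{r-1})^n$. The affine Grassmannian $\cQ$ is an ind-projective ind-scheme, and because $\SL_r$ is simple and simply connected, its Picard group is infinite cyclic, generated by the determinant-of-cohomology line bundle at level one; this is the origin of $\cL$. Since all factors are (ind-)projective with no nonconstant invertible global functions and vanishing $\rH^1$ of the structure sheaf, a K\"unneth-type decomposition yields $\Pic(Y)\cong\ZZ\times(\ZZ^{r-1})^n\cong\ZZ^{(r-1)n+1}$.

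For the equivariant comparison, the forgetful map $\Pic^{\SL_r(A_{C})}(Y)\to\Pic(Y)$ has kernel $\Hom(\SL_r(A_{C}),\CC^{*})$, which vanishes because $\SL_r$ is semisimple and simply connected and hence $\SL_r(A_{C})$ admits no nontrivial characters; this gives uniqueness of linearizations. It remains to establish surjectivity, i.e.\ that each generator descends. For the flag factors this is immediate: $\SL_r(A_{C})$ acts on $\Fl(\CC^r)^n$ through evaluation at the marked points $\bp$, and the $F_{\omega_{j}}$ are already $\SL_r$-equivariant, producing the bundles $F_{i,\omega_{j}}$. The only nontrivial case is the determinant bundle generating $\Pic(\cQ)$.

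The main obstacle is exactly this last descent. In the central-extension picture, a line bundle on $\cQ$ corresponds to a central extension of $\SL_r(\CC((z)))$ by $\CC^{*}$, and descent to the double quotient is equivalent to the splitting of this extension over the subgroup $\SL_r(A_{C})$. I would establish the splitting via the residue theorem on the complete curve $C$: the cocycle of the level-one extension is assembled from local tame symbols at $q$, and its restriction to functions regular on $C\setminus q = \spec A_{C}$ trivializes because the sum of residues of a rational differential on $C$ vanishes. Granting this, both generating families descend, and their independence follows by pulling back to $\cQ$ (which detects $\cL$) and restricting to a flag fiber (which detects the $F_{i,\omega_{j}}$); hence $\cL$ and the $F_{i,\omega_{j}}$ freely generate $\Pic(\cM_X(r,\cO))\cong\ZZ^{(r-1)n+1}$, as claimed.
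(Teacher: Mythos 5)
The paper gives no proof of this statement at all—it is imported verbatim from Laszlo--Sorger \cite[Theorem 1.1]{LS97}—and your outline is essentially a reconstruction of that reference's argument: uniformization, the identification $\Pic(\cM_{X}(r,\cO))$ with the $\SL_{r}(A_{C})$-equivariant Picard group of $\cQ \times \Fl(\CC^{r})^{n}$, the known computation $\Pic(\cQ)\cong\ZZ$, and the descent analysis via vanishing of characters of $\SL_{r}(A_{C})$ together with the splitting of the level-one central extension over $\SL_{r}(A_{C})$ by the residue theorem. So your approach is correct in outline and coincides with the proof the paper relies on.
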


\begin{remark}\label{rem:Picforsingularcurve}
When $X = (C, \bp)$ is a reducible singular curve, then Theorem \ref{thm:uniformization} and Theorem \ref{thm:Picstack} are no longer true as they are. The correct statements are proven by Belkale and Fakhruddin in \cite{BF19}. See \cite[Proposition 5.2]{BF19} for the generalization of Theorem \ref{thm:uniformization}. If $C$ has $m$ irreducible components, then $\mathrm{rank}\;\mathrm{Pic}(\cM_{X}(r, \cO)) = (r-1)n + m$ (\cite[Proposition 7.1]{BF19}). However, the construction of the determinant line bundle is still valid for any $X$.
\end{remark}

\subsection{Algebra of conformal blocks}\label{ssec:conformalblocks}

Fix a pointed stable curve $X = (C, \bp) \in \overline{\cM}_{g, n}$. Here we allow a singular curve. We fix a simple Lie algebra $\mathfrak{g}$, $\ell \in \ZZ_{\ge 0}$ and a collection of dominant integral weights $\vec{\lambda} := (\lambda^{1}, \lambda^{2}, \cdots, \lambda^{n})$ where $(\theta, \lambda^{i}) \le \ell$. Here $\theta$ is the highest root and $(-,-)$ is the normalized Killing form. By using representation theory of affine Lie algebra, Tsuchiya, Kaine, Ueno and Yamada constructed a finite dimensional vector space $\VV_{X, \mathfrak{g}, \ell, \vec{\lambda}}$ and its dual space $\VV_{X, \mathfrak{g}, \ell, \vec{\lambda}}^{\dagger}$, the so-called \emph{space of conformal blocks}.

Here we present a brief outline of the representation theoretic construction of conformal blocks. For the detail of construction, see \cite[Section 3.1]{Uen08}. For each dominant integral weight $\lambda$, there is an irreducible $\mathfrak{g}$-representation $V_{\lambda}$. If we fix $\ell \in \ZZ_{\ge 0}$ such that $(\theta, \lambda) \le \ell$, there is an integrable highest weight module $H_{\ell, \lambda} \supset V_{\lambda}$ of affine Kac-Moody algebra $\hat{\mathfrak{g}}$ of level $\ell$ associated to $\mathfrak{g}$. For a collection of $\ell \in \ZZ_{\ge 0}$ and dominant integral weights $\vec{\lambda} = (\lambda^{1}, \lambda^{2}, \cdots, \lambda^{n})$ with $(\theta, \lambda^{i}) \le \ell$, let $H_{\ell, \vec{\lambda}} := \bigotimes_{i=1}^{n}H_{\ell, \lambda^{i}}$. Now fix $X = (C, \bp) \in \overline{\cM}_{g, n}$. Let
\[
	\rH^{0}(C, \cO_{C}(*\sum_{i}p^{i})) := \varinjlim_{m}\rH^{0}(C, \cO_{C}(m\sum_{i}p^{i}))
\]
and $\mathfrak{\hat{g}}(X) := \mathfrak{g} \otimes \rH^{0}(C, \cO_{C}(*\sum_{i} p^{i}))$. Then it has a natural Lie algebra structure and there is a $\mathfrak{\hat{g}}(X)$-action on $H_{\ell, \vec{\lambda}}$. Now
\[
	\VV_{X, \mathfrak{g}, \ell,  \vec{\lambda}} := H_{\ell, \vec{\lambda}}/\mathfrak{\hat{g}}(X)H_{\ell, \vec{\lambda}}
\]
and
\[
\VV_{X, \mathfrak{g}, \ell, \vec{\lambda}}^{\dagger} := \Hom_{\CC}(\VV_{X, \mathfrak{g}, \ell, \vec{\lambda}}, \CC) = \Hom_{\mathfrak{\hat{g}}(X)}(H_{\ell, \vec{\lambda}}, \CC).
\]

In this paper, $\mathfrak{g} = \mathfrak{sl}_{r}$ and we use the notation $\VV_{X, \ell, \vec{\lambda}}^{\dagger}$ instead of $\VV_{X, \mathfrak{sl}_{r}, \ell, \vec{\lambda}}^{\dagger}$.

The construction of $\VV_{X, \ell, \vec{\lambda}}^{\dagger}$ can be relativized over any family of stable curves. They form  \emph{vector bundle of conformal blocks} $\VV_{\ell, \vec{\lambda}}^{\dagger}$ on the moduli stack $\overline{\cM}_{g, n}$ equipped with a projectively flat connection with logarithmic singularities along the boundary divisors.

The Chern characters of vector bundle of conformal blocks $\VV_{\ell, \vec{\lambda}}^{\dagger}$ defines a semisimple cohomological field theory. In particular, there are several functorial isomorphisms between vector bundle of conformal blocks. Among them, there are two prominent morphisms.

\begin{theorem}[(Propagation of vacua, \protect{\cite[Theorem 3.15]{Uen08}})]\label{thm:propagation}
Let $f : \overline{\cM}_{g, n+1} \to \overline{\cM}_{g, n}$ be the forgetful map of the last marked point. Then $f^{*}\VV_{\ell, (\lambda^{1}, \lambda^{2}, \cdots, \lambda^{n})}^{\dagger} \cong \VV_{\ell, (\lambda^{1}, \lambda^{2}, \cdots, \lambda^{n}, 0)}^{\dagger}$.
\end{theorem}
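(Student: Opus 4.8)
The plan is to reduce the statement to a fiberwise isomorphism of conformal block spaces and then to check that these fiberwise maps glue. Over a point $X' = (C, (p^{1}, \ldots, p^{n}, p^{n+1})) \in \overline{\cM}_{g, n+1}$ lying above $X = (C, (p^{1}, \ldots, p^{n}))$, the fiber of $f^{*}\VV_{\ell, (\lambda^{1}, \ldots, \lambda^{n})}^{\dagger}$ is $\VV_{X, \ell, (\lambda^{1}, \ldots, \lambda^{n})}^{\dagger}$, while the fiber of $\VV_{\ell, (\lambda^{1}, \ldots, \lambda^{n}, 0)}^{\dagger}$ is $\VV_{X', \ell, (\lambda^{1}, \ldots, \lambda^{n}, 0)}^{\dagger}$. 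So it suffices to produce a canonical isomorphism between these two spaces, equivalently between the corresponding spaces of covacua.

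First I would write down the candidate map on covacua. Let $|0\rangle \in H_{\ell, 0}$ be the vacuum (highest weight) vector and define $\Phi$ by $v \mapsto v \otimes |0\rangle$. The one representation-theoretic input is that $|0\rangle$ is annihilated by every operator of the form $g \otimes z^{k}$ with $k \ge 0$: indeed $g \otimes z^{0}$ acts through the trivial $\mathfrak{g}$-module $V_{0} = \CC$, hence as zero, and the operators $g \otimes z^{k}$ with $k > 0$ kill the highest weight space. Consequently, for any $g \otimes s \in \hat{\mathfrak{g}}(X)$, whose expansion at the smooth point $p^{n+1}$ has no pole, the action on the new factor annihilates $|0\rangle$; since $\hat{\mathfrak{g}}(X) \subseteq \hat{\mathfrak{g}}(X')$, this gives $(g \otimes s)\cdot(v \otimes |0\rangle) = ((g \otimes s)\cdot v) \otimes |0\rangle$, so $\Phi$ descends to a map $\VV_{X, \ell, \vec{\lambda}} \to \VV_{X', \ell, (\vec{\lambda}, 0)}$. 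Dually, restriction along $H_{\ell, \vec{\lambda}} \otimes |0\rangle \hookrightarrow H_{\ell, (\vec{\lambda}, 0)}$ sends a conformal block over $X'$ to one over $X$.

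The heart of the argument is showing $\Phi$ is bijective, and here the input is function theory on $C$. The module $H_{\ell, 0}$ is spanned by vectors $(g_{1} \otimes z^{-k_{1}}) \cdots (g_{m} \otimes z^{-k_{m}})|0\rangle$ with $k_{i} > 0$. To prove surjectivity I would trade each creation operator at $p^{n+1}$ for operators supported at $p^{1}, \ldots, p^{n}$: by Riemann--Roch there is a meromorphic function $s$ on $C$ whose Laurent expansion at $p^{n+1}$ has the prescribed principal part $z^{-k}$, at the cost of poles at the other marked points, so that $g \otimes s \in \hat{\mathfrak{g}}(X')$, and modulo $\hat{\mathfrak{g}}(X') H_{\ell, (\vec{\lambda}, 0)}$ the offending creation operator is rewritten in terms of the first $n$ tensor factors. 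Iterating reduces any state to one in the image of $\Phi$. The same supply of functions shows $\Phi$ is injective, i.e. that no new relations are produced.

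Finally, because $\Phi$ is defined by the canonical inclusion of the vacuum vector, it is natural in $X'$ and therefore globalizes to an isomorphism of sheaves over $\overline{\cM}_{g, n+1}$; compatibility with the projectively flat connections follows from the uniqueness of the propagated block together with the functoriality of the connection under the forgetful map. I expect the main obstacle to be the bijectivity in the third step: one must keep the gauge-reduction bookkeeping consistent and, for a singular stable curve $C$, verify that $\rH^{0}(C, \cO_{C}(*\sum p^{i}))$ still supplies functions with arbitrary prescribed principal part at the smooth point $p^{n+1}$, which requires controlling the behavior of the function theory at the nodes of $C$.
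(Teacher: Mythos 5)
This theorem is not proved in the paper at all: it is imported verbatim from \cite[Theorem 3.15]{Uen08} (going back to Tsuchiya--Ueno--Yamada), so the relevant comparison is with the standard proof in that reference, and your sketch does follow its outline: the vacuum insertion $\Phi(v) = v \otimes |0\rangle$, the observation that $\mathfrak{g} \otimes z^{k}$ with $k \ge 0$ annihilates $|0\rangle$ so that $\Phi$ descends along the inclusion $\hat{\mathfrak{g}}(X) \subseteq \hat{\mathfrak{g}}(X')$, and surjectivity on covacua by trading each creation operator $g \otimes z^{-k}$ at $p^{n+1}$ for a gauge element $g \otimes s$ with prescribed principal part. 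One bookkeeping point you should make explicit in that step: the regular tail of $s$ at $p^{n+1}$ contributes zero-mode operators $g \otimes z^{0}$ that preserve the energy degree of the vacuum factor, so ``iterating'' terminates only because you apply the trade to a vector of the form $(g\otimes z^{-k})u'$ with $\deg u' < \deg u$ and induct on that degree.

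The genuine gap is injectivity. ``The same supply of functions'' does not prove it: surjectivity rewrites vectors modulo gauge, but injectivity requires showing that a relation $v \otimes |0\rangle \in \hat{\mathfrak{g}}(X')H_{\ell,(\vec{\lambda},0)}$ forces $v \in \hat{\mathfrak{g}}(X)H_{\ell,\vec{\lambda}}$, and this is not symmetric to the previous step. The standard route is dual: one extends a block $\varphi$ over $X$ to $\tilde{\varphi}$ over $X'$ by defining it on the PBW spanning set $(g_{1}\otimes z^{-k_{1}})\cdots(g_{m}\otimes z^{-k_{m}})|0\rangle$ via the gauge transport of the surjectivity argument, and the crux is well-definedness: $\tilde{\varphi}$ must kill the kernel of the generalized Verma module surjecting onto the integrable vacuum module $H_{\ell,0}$, i.e.\ the submodule generated by the null vector $(x_{\theta}\otimes z^{-1})^{\ell+1}|0\rangle$. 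That verification uses integrability of the modules $H_{\ell,\lambda^{i}}$ at the remaining points (conformal blocks vanish on vectors of the form $(x_{\theta}\otimes s)^{\ell+1}w$); pure Riemann--Roch function theory cannot see the level $\ell$, so your argument as written cannot close. Your worry about nodal curves is well placed but secondary: the obstruction to prescribing principal parts at the smooth point $p^{n+1}$ lies in $\rH^{1}(C, \cO_{C}(m\sum_{i\le n}p^{i}))$, which vanishes for $m \gg 0$ when every irreducible component of $C$ carries one of the $p^{i}$ --- the hypothesis in the TUY/Ueno setting, with the general stable case bootstrapped by first propagating vacua onto pointless components. Finally, your globalization step is fine and needs less than you claim: the insertion map is defined over any base family, and a morphism of locally free sheaves that is fiberwise an isomorphism is an isomorphism; compatibility with the projective connection is not needed for the statement.
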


\begin{theorem}[(Factorization, \protect{\cite[Theorem 3.19]{Uen08}})]\label{thm:factorization}
\begin{enumerate}
\item Let $g : \overline{\cM}_{g-1, n+2} \to \overline{\cM}_{g, n}$ be the gluing map of the last two marked points. Then
\[
	g^{*}\VV_{\ell, (\lambda^{1}, \lambda^{2}, \cdots, \lambda^{n})}^{\dagger}
	\cong \bigoplus_{\mu, (\mu, \theta) \le \ell}
	\VV_{\ell, (\lambda^{1}, \lambda^{2}, \cdots, \lambda^{n}, \mu, \mu^{*})}^{\dagger}
\]
\item Let $I_{1} \sqcup I_{2} = [n]$ be a partition. Let $h : \overline{\cM}_{g_{1}, |I_{1}|+1} \times \overline{\cM}_{g_{2}, |I_{2}|+1} \to \overline{\cM}_{g, n}$ be the gluing map of last marked points. Then
\[
	h^{*}\VV_{\ell, (\lambda^{1}, \lambda^{2}, \cdots, \lambda^{n})}^{\dagger}
	\cong \bigoplus_{\mu, (\mu, \theta) \le \ell}
	\VV_{\ell, ((\lambda^{i})_{i \in I_{1}}, \mu)}^{\dagger} \otimes
	\VV_{\ell, ((\lambda^{i})_{i \in I_{2}}, \mu^{*})}^{\dagger}.
\]
\end{enumerate}
\end{theorem}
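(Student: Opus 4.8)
The statement is a foundational theorem of Tsuchiya--Ueno--Yamada, and I would follow the representation-theoretic strategy of its proof rather than invoke geometry. It is cleanest to set up both parts uniformly: let $C_0$ be the nodal curve with node $q_0$ represented by the image of the gluing map, let $\nu \colon \tilde C \to C_0$ be its normalization with $\nu^{-1}(q_0) = \{p_+, p_-\}$, and note that for part (2) the normalization is the disjoint union $\tilde C = \tilde C_1 \sqcup \tilde C_2$. Since conformal blocks on a disjoint union factor as a tensor product, $\VV^{\dagger}_{\tilde C,\ell,(\vec\lambda,\mu,\mu^*)} \cong \VV^{\dagger}_{\tilde C_1,\ell,((\lambda^i)_{i\in I_1},\mu)} \otimes \VV^{\dagger}_{\tilde C_2,\ell,((\lambda^i)_{i\in I_2},\mu^*)}$, part (2) is the special case of part (1) for a disconnected normalization. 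So I would prove the single-node statement and treat the two cases together, the only difference being whether $\tilde C$ is connected.

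The first step is to construct the factorization (sewing) map. For each $\mu$ with $(\mu,\theta)\le\ell$ the modules $H_{\ell,\mu}$ and $H_{\ell,\mu^*}$ are contragredient, so there is a canonical invariant pairing and a dual coevaluation element $\Delta_\mu = \sum_i u_i \otimes u^i$ in the completed tensor product of $H_{\ell,\mu}$ and $H_{\ell,\mu^*}$ (dual bases). Contracting the two node-insertions of a block on $\tilde C$ against $\Delta_\mu$ produces a functional on $H_{\ell,\vec\lambda}$, and summing over $\mu$ defines
\[
	\bigoplus_{\mu,\,(\mu,\theta)\le\ell}
	\VV^{\dagger}_{\tilde C,\ell,(\vec\lambda,\mu,\mu^*)}
	\longrightarrow
	\VV^{\dagger}_{C_0,\ell,\vec\lambda}.
\]
I would first check this is well defined, i.e.\ that the contracted functional is annihilated by $\mathfrak{\hat{g}}(C_0)$; this uses the normalization sequence $0 \to \cO_{C_0} \to \nu_*\cO_{\tilde C} \to \CC_{q_0} \to 0$, which identifies $\mathfrak{\hat{g}}(C_0)$ with the subalgebra of $\mathfrak{\hat{g}}(\tilde C)$ whose sections take equal values at $p_+$ and $p_-$, together with invariance of the pairing under the diagonal node action (the energy grading guarantees the contraction against $\Delta_\mu$ is finite degreewise).

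The heart of the proof is a \emph{local computation at the node} showing this map is an isomorphism. Localizing to a formal punctured disk around $q_0$, the discrepancy between the $\mathfrak{\hat{g}}(C_0)$- and $\mathfrak{\hat{g}}(\tilde C)$-actions is governed entirely by the current algebra supported at $q_0$, and the relevant input is the two-pointed conformal block on $\PP^1$: one computes directly that $\VV^{\dagger}_{\PP^1,\ell,(\mu,\nu)}$ is one dimensional when $\nu\cong\mu^*$ and zero otherwise. This single computation simultaneously explains the index set $\{\mu : (\mu,\theta)\le\ell\}$, forces the appearance of the dual weight $\mu^*$, and --- crucially --- uses integrability of the level-$\ell$ modules to cut the a priori infinite sum down to finitely many $\mu$. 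Surjectivity then follows because the node-supported currents together with $\mathfrak{\hat{g}}(\tilde C)$ generate all relations defining $\VV^{\dagger}_{C_0,\ell,\vec\lambda}$; injectivity and directness of the sum follow by filtering each $H_{\ell,\mu}$ by its $\mathfrak{g}$-finite layers and passing to the associated graded, where the pairing between $V_\mu$ and $V_{\mu^*}$ is nondegenerate and the layers for distinct $\mu$ are orthogonal.

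Finally, to promote this fiberwise isomorphism to the stated identity of pullback bundles, I would work over a one-parameter smoothing with the node opened as $xy=t$ and use that the sheaf of conformal blocks is coherent and carries the Sugawara connection with logarithmic poles along the boundary divisor. Coherence together with the flat logarithmic connection forces the sheaf to be locally free near $C_0$, so the fiber over the boundary point computes $h^{*}\VV^{\dagger}_{\ell,\vec\lambda}$ (resp.\ $g^{*}$), and the sewing map identifies it with the direct sum. I expect this last step to be the main obstacle: the algebraic sewing map is formal and the two-point computation is clean, but ruling out a rank jump of the coinvariants as $t\to 0$ --- which is exactly what makes the decomposition an honest equality of vector bundles rather than a mere comparison of fiber dimensions --- requires the connection and a delicate analysis of the $xy=t$ degeneration, and is naturally organized by induction on the number of nodes (the dimension of the boundary stratum). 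This is the technical core of the cited argument of Ueno.
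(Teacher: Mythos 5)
First, a point of orientation: the paper does not prove this theorem at all --- it is quoted as a foundational input from \cite[Theorem 3.19]{Uen08}, and the only place the paper touches its mechanism is the proof of Proposition \ref{prop:factorizationalgebra}, where the factorization isomorphism is invoked through its explicit description: the component $\varphi_{\rho}$ of a block on the nodal curve corresponds to the unique block $\tilde{\varphi}_{\rho}$ on the normalization satisfying $\tilde{\varphi}_{\rho}(v \otimes \mathbf{1}_{\rho, \rho^{*}}) = \varphi_{\rho}(v)$, where $\mathbf{1}_{\rho,\rho^{*}} \in V_{\rho}\otimes V_{\rho^{*}}$ is the \emph{finite-dimensional} coevaluation vector. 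So you are reconstructing the proof of a cited result; your architecture (normalization sequence identifying $\hat{\mathfrak{g}}(C_{0})$ with the equal-values-at-$p_{\pm}$ subalgebra, the two-point computation $\dim \VV^{\dagger}_{\PP^{1}, \ell, (\mu,\nu)} = \delta_{\nu, \mu^{*}}$, integrability cutting the index set down to $(\mu,\theta)\le\ell$, uniform treatment of connected and disconnected normalizations) is indeed the correct Tsuchiya--Ueno--Yamada strategy.

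There is, however, a genuine defect in your construction of the map. Contraction against the completed coevaluation $\Delta_{\mu} = \sum_{i} u_{i}\otimes u^{i}$ does \emph{not} define a functional on $H_{\ell,\vec{\lambda}}$: for fixed $v$, the sum $\sum_{i}\tilde{\varphi}(v\otimes u_{i}\otimes u^{i})$ has finitely many terms in each energy degree but is an infinite sum over degrees, and nothing makes it terminate or converge. The full $\Delta_{\mu}$ belongs only to the sewing construction over the smoothing $xy=t$, where the degree-$d$ layer is weighted by $t^{d}$ and one gets a formal power series; at the nodal curve itself the factorization map must be defined by contraction against the degree-zero part $\mathbf{1}_{\mu,\mu^{*}} \in V_{\mu}\otimes V_{\mu^{*}}$ alone (exactly the map used in the paper's Proposition \ref{prop:factorizationalgebra}). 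This correction is also what makes your well-definedness step actually run: sections of $\hat{\mathfrak{g}}(C_{0})$ are regular at the node, the positive modes at $p_{\pm}$ annihilate the lowest energy level $V_{\mu}\otimes V_{\mu^{*}}$, and the residual action is the diagonal $\mathfrak{g}$-action through the value at the node, which kills $\mathbf{1}_{\mu,\mu^{*}}$ by invariance --- none of this applies to the higher layers of $\Delta_{\mu}$. Relatedly, your final paragraph misplaces the technical weight: since $g$ and $h$ map into the boundary of $\overline{\cM}_{g,n}$, the statement to be proved concerns the restriction of $\VV^{\dagger}_{\ell,\vec{\lambda}}$ to boundary strata, and it requires only the fiberwise isomorphism together with its functoriality in families within the stratum; the rank-jump analysis along $xy=t$ with the logarithmic connection is what proves local freeness of $\VV^{\dagger}_{\ell,\vec{\lambda}}$ \emph{across} the boundary (and, ultimately, the Verlinde formula), not the factorization isomorphism stated here.
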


Fix a curve $X \in \overline{\cM}_{g, n}$. There is also a natural product morphism
\begin{equation}\label{eqn:tensormap}
	\VV_{X, \ell, \vec{\lambda}}^{\dagger} \otimes
	\VV_{X, m, \vec{\mu}}^{\dagger} \to
	\VV_{X, \ell+m, \vec{\lambda} + \vec{\mu}}^{\dagger}.
\end{equation}
From the representation theoretic viewpoint, this map can be constructed as follows. The tensor product $V_{\lambda}\otimes V_{\mu}$ of $\mathfrak{g}$-representations has a unique irreducible subrepresentation isomorphic to $V_{\lambda + \mu}$. Thus there is a canonical $\mathfrak{g}$-module morphism  $V_{\lambda + \mu} \to V_{\lambda} \otimes V_{\mu}$. This construction is extended to a morphism between representations of $\mathfrak{\hat{g}}$:
\[
	H_{\ell+m, \lambda + \mu} \to H_{\ell, \lambda} \otimes H_{m, \mu}.
\]
By taking tensor products, we obtain a map
\[
	H_{\ell+m, \vec{\lambda} + \vec{\mu}} \to H_{\ell, \vec{\lambda}} \otimes H_{m, \vec{\mu}}.
\]
Take the dual, then we have
\[
	H_{\ell, \vec{\lambda}}^{*} \otimes H_{m, \vec{\mu}}^{*} \to H_{\ell + m, \vec{\lambda} + \vec{\mu}}^{*}.
\]
One may check that $\mathfrak{\hat{g}}$-action is compatible and we may obtain the map in \eqref{eqn:tensormap} after taking quotients. For the detail of the construction, see \cite[Section 2.2]{Man18}. After the identification in Theorem \ref{thm:Paulystack}, we may obtain another description as the tensor product of sections. They are identical.

Thus the direct sum of all conformal blocks
\[
	\VV_{X}^{\dagger} := \bigoplus_{\ell, \vec{\lambda}}
	\VV_{X, \ell, \vec{\lambda}}^{\dagger}
\]
has a commutative graded $\CC$-algebra structure. This algebra is called the \emph{algebra of conformal blocks} of type A. We also obtain a sheaf of algebras $\displaystyle\VV^{\dagger} := \bigoplus_{\ell, \vec{\lambda}}\VV_{\ell, \vec{\lambda}}^{\dagger}$.

\begin{remark}
The factorization holds on the level of algebras of conformal blocks (\cite[Proposition 3.1]{Man18}). See Proposition \ref{prop:factorizationalgebra}.
\end{remark}

\subsection{Conformal blocks and moduli spaces of parabolic bundles}

A conformal block can be understood as a section of a line bundle on $\cM_{X}(r, \cO)$.

\begin{theorem}[(\protect{\cite[Propositions 6.5 and 6.6]{Pau96} when $X$ is smooth, \cite[Theorem 1.7]{BF19} for a singular $X$})]\label{thm:Paulystack}
Let $X \in \overline{\cM}_{g, n}$. There is a canonical isomorphism
\[
	\VV_{X, \ell, \vec{\lambda}}^{\dagger} \cong
	\rH^{0}(\cM_{X}(r, \cO), \cL^{\ell} \otimes
	\bigotimes_{i=1}^{n}F_{i, \lambda^{i}}).
\]
\end{theorem}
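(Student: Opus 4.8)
The plan is to prove the isomorphism through the uniformization of $\cM_{X}(r, \cO)$, following the strategy of Beauville--Laszlo and Pauly: I would reduce the computation of global sections to a Borel--Weil type statement on (affine) flag varieties and then match the resulting invariance condition with the defining coinvariance of conformal blocks, using the functorial properties available in the excerpt.

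First I would treat the smooth case $X \in \cM_{g,n}$. By the uniformization theorem (Theorem \ref{thm:uniformization}), $\cM_{X}(r, \cO)$ is the double quotient stack $\SL_{r}(A_{C}) \backslash (\cQ \times \Fl(\CC^{r})^{n})$, where $\cQ = \SL_{r}(\CC((z)))/\SL_{r}(\CC[[z]])$ is the affine Grassmannian attached to the auxiliary point $q$. Consequently, global sections of any line bundle on the quotient stack are the $\SL_{r}(A_{C})$-invariant sections of its pullback to $\cQ \times \Fl(\CC^{r})^{n}$:
\[
\rH^{0}\!\left(\cM_{X}(r, \cO),\ \cL^{\ell} \otimes \bigotimes_{i=1}^{n} F_{i, \lambda^{i}}\right)
= \rH^{0}\!\left(\cQ \times \Fl(\CC^{r})^{n},\ \widetilde{\cL^{\ell}} \otimes \bigotimes_{i=1}^{n} \widetilde{F_{i, \lambda^{i}}}\right)^{\SL_{r}(A_{C})}.
\]
The next step is to identify the space upstairs as a tensor product of representations. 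On each flag factor, the finite-dimensional Borel--Weil theorem gives $\rH^{0}(\Fl(\CC^{r}), F_{\lambda^{i}}) \cong V_{\lambda^{i}}^{*}$. On the affine Grassmannian factor, $\cL$ is the ample generator of $\Pic(\cQ)$ (the determinant-of-cohomology bundle, compatibly with Theorem \ref{thm:Picstack}), and the affine Borel--Weil theorem identifies $\rH^{0}(\cQ, \cL^{\ell})$ with the restricted dual $H_{\ell, 0}^{*}$ of the level-$\ell$ vacuum module. Taking the exterior tensor product, the space upstairs becomes $(H_{\ell, 0} \otimes \bigotimes_{i} V_{\lambda^{i}})^{*}$.

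The heart of the argument is to match the $\SL_{r}(A_{C})$-invariance with the coinvariance defining conformal blocks. Expanding a function of $A_{C} = \rH^{0}(C \setminus q, \cO)$ in a local coordinate at $q$ embeds $\mathfrak{sl}_{r} \otimes A_{C}$ into $\mathfrak{sl}_{r} \otimes \CC((z))$, whose coadjoint action is the current action on $H_{\ell,0}^{*}$; simultaneously, evaluation at each $p^{i}$ produces the diagonal $\mathfrak{sl}_{r}$-action on the factors $V_{\lambda^{i}}^{*}$. Thus the invariants are exactly $\Hom_{\mathfrak{sl}_{r} \otimes A_{C}}(H_{\ell,0} \otimes \bigotimes_{i} V_{\lambda^{i}}, \CC)$. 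To recognize this as $\VV_{X, \ell, \vec{\lambda}}^{\dagger}$, I would first apply the propagation of vacua (Theorem \ref{thm:propagation}) to insert the auxiliary point $q$ with trivial weight, giving $\VV_{X, \ell, \vec{\lambda}}^{\dagger} \cong \VV_{(C, (\bp, q)), \ell, (\vec{\lambda}, 0)}^{\dagger}$, so that the vacuum factor $H_{\ell,0}$ at $q$ is the correct module. The remaining point is that coinvariance under the smaller algebra $\mathfrak{sl}_{r} \otimes A_{C}$ acting on the finite pieces $V_{\lambda^{i}}$ agrees with coinvariance under the full $\hat{\mathfrak{g}}(X)$ acting on the integrable modules $H_{\ell, \lambda^{i}}$: this is the standard fact that the loop-algebra action reconstructs $H_{\ell, \lambda^{i}}$ from its finite-dimensional top $V_{\lambda^{i}}$, proved via the residue theorem identifying the globally-acting current algebra with the diagonal one. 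This yields the isomorphism for smooth $X$.

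I expect the main obstacle to be precisely this last comparison: the affine Borel--Weil identification on $\cQ$ together with the verification that imposing invariance under the global functions $A_{C}$ correctly propagates the finite-dimensional data to the full integrable modules. This is where the integrability of $H_{\ell, \lambda}$ and the Kumar--Mathieu theory of sections on Kac--Moody flag varieties enter, and it is the technical core of Pauly's proof. Finally, for a singular $X \in \overline{\cM}_{g,n}$, uniformization in the above form fails (Remark \ref{rem:Picforsingularcurve}); here I would instead invoke the Belkale--Fakhruddin version of uniformization and reduce to the smooth case by normalizing $C$ and applying the factorization isomorphism (Theorem \ref{thm:factorization}) one node at a time, checking at each step that the gluing is compatible with the determinant and parabolic line bundles so that both sides factorize identically.
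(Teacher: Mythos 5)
This theorem is not proved in the paper at all: it is imported by citation, from \cite{Pau96} (Propositions 6.5 and 6.6) for smooth $X$ and from \cite{BF19} (Theorem 1.7) for singular $X$, so there is no internal argument to compare yours against. Measured against the cited sources, your outline for the smooth case is a faithful reconstruction of the standard Beauville--Laszlo/Pauly proof: uniformization (Theorem \ref{thm:uniformization}), descent of sections to $\SL_{r}(A_{C})$-invariants, finite Borel--Weil on the flag factors and the Kumar--Mathieu affine Borel--Weil theorem on the affine Grassmannian, propagation of vacua (Theorem \ref{thm:propagation}) to insert the auxiliary point $q$ with vacuum weight, and finally the comparison between invariance of functionals on $H_{\ell,0}\otimes\bigotimes_{i}V_{\lambda^{i}}$ under $\mathfrak{sl}_{r}\otimes A_{C}$ and coinvariance of the full integrable modules under $\hat{\mathfrak{g}}(X)$. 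You correctly flag that last extension step as the technical core. One point your sketch elides: the geometric side gives invariance under the ind-group $\SL_{r}(A_{C})$, while conformal blocks are defined by Lie-algebra coinvariance, and passing between the two requires a separate (connectedness/integration) argument; this is handled explicitly in the cited proofs. The duality convention is also off relative to the paper: Theorem \ref{thm:Picstack} is stated with $\rH^{0}(\Fl(\CC^{r}),F_{\lambda})\cong V_{\lambda}$, not $V_{\lambda}^{*}$, though this does not affect the structure of the argument.

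The singular case is the weakest part of your proposal. Saying "normalize $C$ and apply factorization one node at a time, checking that both sides factorize identically" assumes precisely what must be proved: the factorization of Theorem \ref{thm:factorization} is a statement about conformal blocks, and to transport it to the geometric side you need a stack-theoretic counterpart --- a bundle on the nodal curve is a bundle on the normalization together with a gluing of the fibers over the two preimages of the node, so sections over $\cM_{X}(r,\cO)$ decompose by a Peter--Weyl-type argument into sums over pairs $(\mu,\mu^{*})$ attached to the branches, and the sum truncates to weights with $(\mu,\theta)\le\ell$ only because of a vanishing theorem for the higher-level summands. Establishing this decomposition (or, alternatively, a uniformization theorem valid on singular curves, which is the route of \cite{BF19}) is the actual content of the singular case, not a routine compatibility check.
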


When $X \in \cM_{g, n}$, Pauly also proves a similar statement for the coarse moduli space.

\begin{theorem}\label{thm:Paulycoarse}
Fix $\ell \ge 0$ and let $\vec{\lambda} = (\lambda^{1}, \lambda^{2}, \cdots, \lambda^{n})$ be a collection of dominant integral weights such that $(\theta, \lambda^{i}) < \ell$. Let $\ba$ be the parabolic weight such that $a_{j}^{i} = \lambda_{j}^{i}/\ell$.
\begin{enumerate}
\item \protect{(\cite[Theorem 3.3]{Pau96})} Suppose that $r|\sum_{i=1}^{n}\sum_{j=1}^{r-1}\lambda_{j}^{i}$. The restriction of $\cL^{\ell}\otimes \bigotimes_{i=1}^{n}F_{i, \lambda^{i}}$ to $\cM_{X}(r, \cO, \ba)$ descends to $\bM_{X}(r, \cO, \ba)$. Moreover, it is an ample line bundle on $\bM_{X}(r, \cO, \ba)$.
\item \protect{(\cite[Proposition 5.2]{Pau96})} $\VV_{X, \ell, \vec{\lambda}}^{\dagger} \cong \rH^{0}(\bM_{X}(r, \cO, \ba), \cL^{\ell} \otimes \bigotimes_{i=1}^{n}F_{i, \lambda^{i}})$.
\end{enumerate}
\end{theorem}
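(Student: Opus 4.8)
The plan is to derive both statements from the stacky identification in Theorem \ref{thm:Paulystack} by descending along the structure map $p$ in diagram \eqref{eqn:stackdiagram}. For part (1), the governing principle is Kempf's descent lemma (cf. Dr\'ezet--Narasimhan): the restriction of a line bundle from $\cM_{X}(r,\cO,\ba)$ descends to $\bM_{X}(r,\cO,\ba)$ exactly when, at every point with closed orbit, the automorphism group acts trivially on the fiber. For a stable object of the rigidified stack an automorphism must be a scalar $\zeta\cdot\Id$ preserving the determinant trivialization, hence $\zeta\in\mu_{r}$, so I would compute the $\mu_{r}$-weight of the fiber of $\cL^{\ell}\otimes\bigotimes_{i=1}^{n}F_{i,\lambda^{i}}$. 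On $\cL=\det R^{1}\pi_{*}E\otimes(\det\pi_{*}E)^{-1}$ the scalar $\zeta$ acts by $\zeta^{-\chi(E)}=\zeta^{-r(1-g)}$, which is trivial on $\mu_{r}$; on each Borel--Weil line bundle $F_{i,\lambda^{i}}$ the center of $\SL_{r}$ acts through the central character $\zeta\mapsto\zeta^{\sum_{j}\lambda^{i}_{j}}$ (up to the sign of the convention). Multiplying, $\zeta$ acts on the total fiber by $\zeta^{\sum_{i,j}\lambda^{i}_{j}}$, which is trivial for all $\zeta\in\mu_{r}$ precisely when $r\mid\sum_{i,j}\lambda^{i}_{j}$; this is the stated divisibility hypothesis. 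When $\ba$ is not general the closed orbits correspond to polystable parabolic bundles with larger stabilizers, and I would run the same central-character computation on each graded summand to see that the divisibility still forces triviality. For ampleness I would invoke the GIT construction underlying the Mehta--Seshadri coarse moduli space: the correspondence $a^{i}_{j}=\lambda^{i}_{j}/\ell$ is arranged so that $\ba$-parabolic (semi)stability coincides with GIT (semi)stability for the linearization $\cL^{\ell}\otimes\bigotimes_{i=1}^{n}F_{i,\lambda^{i}}$, whence the descended bundle is a positive power of the tautological polarization on the quotient and is therefore ample. The hypothesis $(\theta,\lambda^{i})<\ell$ guarantees $a^{i}_{1}<1$, so that $\ba$ is an honest weight in $\cW^{0}$ and this correspondence is nondegenerate.

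For part (2) I would compare global sections along $p$. First, since $\cM_{X}(r,\cO)$ is smooth, hence normal, and the complement of the semistable locus $\cM_{X}(r,\cO,\ba)$ is the unstable locus, an extension-of-sections (Hartogs) argument gives
\[
\rH^{0}\big(\cM_{X}(r,\cO),\cL^{\ell}\otimes\textstyle\bigotimes_{i=1}^{n}F_{i,\lambda^{i}}\big)
\cong
\rH^{0}\big(\cM_{X}(r,\cO,\ba),\cL^{\ell}\otimes\textstyle\bigotimes_{i=1}^{n}F_{i,\lambda^{i}}\big),
\]
provided the unstable locus has codimension at least two. Next, writing $M$ for the descended bundle produced in part (1), the restricted bundle equals $p^{*}M$, and because $p_{*}\cO_{\cM_{X}(r,\cO,\ba)}=\cO_{\bM_{X}(r,\cO,\ba)}$ the projection formula yields $\rH^{0}(\cM_{X}(r,\cO,\ba),p^{*}M)\cong\rH^{0}(\bM_{X}(r,\cO,\ba),M)$. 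Chaining these isomorphisms with Theorem \ref{thm:Paulystack} identifies $\VV_{X,\ell,\vec{\lambda}}^{\dagger}$ with $\rH^{0}(\bM_{X}(r,\cO,\ba),\cL^{\ell}\otimes\bigotimes_{i=1}^{n}F_{i,\lambda^{i}})$, as required.

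The main obstacle is the codimension estimate feeding the extension step: everything in part (2) hinges on the unstable locus of $\cM_{X}(r,\cO)$ having codimension at least two, for only then do the sections on the full stack coincide with those on the semistable locus. Bounding this codimension requires stratifying unstable parabolic bundles by their maximal destabilizing (Harder--Narasimhan) type and a dimension count on each stratum; this is exactly the delicate computation, following Sun, that the later sections carry out, and it is where the real work lies. By contrast the descent computation and the projection-formula identification are formal once the codimension bound is in hand.
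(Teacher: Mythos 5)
Your proposal is correct and follows essentially the same route as the source: the paper does not reprove this theorem but cites Pauly, whose argument is exactly your Kempf-descent/central-character computation for part (1) (the paper itself alludes to this ``weight computation'' in the proof of Proposition \ref{prop:Picardgroupstackandcoarsemoduli}) and your codimension-plus-projection-formula chain for part (2) (mirrored in Steps 1--2 of Proposition \ref{prop:coxringidentification}, with the codimension input supplied by the Sun-type estimate you correctly identify as the crux, and noted in the paper's remark as the step that can fail for $g \le 1$).
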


\begin{remark}\label{rem:degeneratedpardata}
Note that if $\lambda^{i}$ is a dominant integral weight such that $\lambda_{j}^{i} = \lambda_{j+1}^{i}$ (including $\lambda_{r-1}^{i} = \lambda_{r}^{i} = 0$), then $a_{j}^{i} = a_{j+1}^{i}$, so $\ba$ is not a genuine parabolic weight in the sense of Definition \ref{def:parweight}. In this case, we may think of $\cM_{X}(r, \cO, \ba)$ and $\bM_{X}(r, \cO, \ba)$ as moduli of parabolic bundles with partial parabolic data. For details, see \cite[Section 2]{Pau96}.
\end{remark}

\begin{remark}
The results of \cite{Pau96} are written under the assumption that $g \ge 2$. However, for the standard construction of the moduli stack, the construction of the coarse moduli space by geometric invariant theory (GIT), and the identification of spaces of global sections, the assumption is not essential. Exceptions are:
\begin{enumerate}
\item When $g(C) = 0$, for some $\ba$, the moduli stack $\cM_{X}(r, \cO, \ba)$ may be empty. When $g = 1$, the moduli stack $\cM_{X}(r, \cO, \ba)$ may not have any stable object.
\item \cite[Proposition 5.2]{Pau96} depends on the codimension calculation of the complement of $\cM_{X}(r, \cO, \ba)$ in $\cM_{X}(r, \cO)$. This result may not be valid for $g \le 1$ and some $\ba$.
\end{enumerate}
\end{remark}

Let $\cM$ be a $\CC$-scheme or an algebraic $\CC$-stack whose Picard group is a finitely generated free abelian group. The \emph{Cox ring} of $\cM$ is
\[
	\Cox(\cM) := \bigoplus_{L \in \mathrm{Pic}(\cM)}
	\rH^{0}(\cM, L).
\]
It has a commutative $\mathrm{Pic}(\cM)$-graded $\CC$-algebra structure.

By combining Theorems \ref{thm:Picstack} and \ref{thm:Paulystack}, we obtain:

\begin{corollary}\label{cor:conformalblockCoxring}
Let $X \in \cM_{g, n}$. Then as $\CC$-algebras,
\[
	\VV_{X}^{\dagger} \cong \Cox(\cM_{X}(r, \cO)).
\]
\end{corollary}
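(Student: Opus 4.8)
The plan is to read off both sides as direct sums graded by the same index set, match them term by term, and then check that the two ring structures agree.

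First I would use Theorem \ref{thm:Picstack} to rewrite the grading. Since $\Pic(\cM_X(r,\cO))$ is freely generated by $\cL$ and the $F_{i,\omega_j}$, every line bundle is uniquely $\cL^{\ell}\otimes\bigotimes_{i=1}^n F_{i,\lambda^i}$, where $\ell\in\ZZ$ and each $\lambda^i=\sum_{j=1}^{r-1}c_{ij}\omega_j$ is an integral weight of $\mathfrak{sl}_r$ with arbitrary integer coefficients $c_{ij}$ (so $\lambda^i$ need not be dominant), and $F_{i,\lambda^i}:=\bigotimes_{j}F_{i,\omega_j}^{c_{ij}}$. Hence
\[
\Cox(\cM_X(r,\cO))=\bigoplus_{(\ell,\vec\lambda)}\rH^0\Bigl(\cM_X(r,\cO),\ \cL^{\ell}\otimes\bigotimes_{i=1}^n F_{i,\lambda^i}\Bigr),
\]
the sum running over all $(\ell,\vec\lambda)\in\ZZ^{(r-1)n+1}$.

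Next I would identify each graded piece. For \emph{admissible} data, meaning $\ell\ge 0$, each $\lambda^i$ dominant, and $(\theta,\lambda^i)\le\ell$, Theorem \ref{thm:Paulystack} gives $\rH^0(\cdots)\cong\VV_{X,\ell,\vec\lambda}^\dagger$; these are precisely the summands of $\VV_X^\dagger$. The remaining step is to show that the section space vanishes for all non-admissible line bundles, so that they contribute nothing. Here I would use the uniformization description (Theorem \ref{thm:uniformization}): a global section is an $\SL_r(A_C)$-invariant section on $\SL_r(\CC((z)))/\SL_r(\CC[[z]])\times\Fl(\CC^r)^n$, and the ambient section space is the tensor product of the sections of $\cL^{\ell}$ on the affine Grassmannian factor with the spaces $\rH^0(\Fl(\CC^r),F_{i,\lambda^i})$. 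If some $\lambda^i$ is not dominant, the corresponding flag factor has no sections by Borel--Weil--Bott; if $\ell<0$, the determinant bundle has no sections on the affine Grassmannian; and if $(\theta,\lambda^i)>\ell$ for some $i$, the invariants vanish because the highest weight $\lambda^i$ exceeds the level, so $V_{\lambda^i}$ does not sit inside any level-$\ell$ integrable module $H_{\ell,\lambda^i}$.

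Finally I would assemble the pieces. Summing over all $(\ell,\vec\lambda)$, the nonzero graded components are exactly the admissible ones, which reassemble into $\bigoplus_{\ell,\vec\lambda}\VV_{X,\ell,\vec\lambda}^\dagger=\VV_X^\dagger$ as a $\Pic(\cM_X(r,\cO))$-graded vector space. To upgrade this to an isomorphism of $\CC$-algebras, I would verify that the Cox-ring product (tensor product of sections) corresponds under Theorem \ref{thm:Paulystack} to the product \eqref{eqn:tensormap} on $\VV_X^\dagger$; this compatibility is precisely the identification of the two descriptions recorded just after \eqref{eqn:tensormap}, so the term-by-term isomorphisms glue into a graded algebra isomorphism.

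The main obstacle is the vanishing of $\rH^0$ for non-admissible line bundles, and in particular the level-exceeded case $(\theta,\lambda^i)>\ell$. The cases $\ell<0$ and $\lambda^i$ non-dominant are immediate from positivity of $\cL$ on the affine Grassmannian and from Borel--Weil--Bott, but ruling out theta functions whose weight at a marked point exceeds the level genuinely uses the integrability bound in the representation theory of $\hat{\mathfrak{g}}$, namely the existence condition for $H_{\ell,\lambda^i}$; I would either extract this vanishing directly from the computations behind Theorem \ref{thm:Paulystack} or argue it through this affine representation theory. A secondary technical point to confirm is the validity of the tensor-product (Künneth) factorization of sections and of the passage to $\SL_r(A_C)$-invariants on the relevant (ind-)stack, for which I would rely on the references underlying Theorems \ref{thm:uniformization} and \ref{thm:Paulystack}.
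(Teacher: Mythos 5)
Your proposal is correct and takes essentially the same route as the paper, whose entire proof of this corollary is "combine Theorem \ref{thm:Picstack} and Theorem \ref{thm:Paulystack}" --- exactly your grading-plus-identification step. The one thing you add is an explicit treatment of the vanishing of $\rH^{0}$ for line bundles outside the conformal-block range ($\ell<0$, non-dominant $\lambda^{i}$, or $(\theta,\lambda^{i})>\ell$), which the paper leaves implicit; your proposed arguments (Borel--Weil on the flag factors, positivity of $\cL$ on the affine Grassmannian, and the integrability bound behind Theorem \ref{thm:Paulystack}) are the standard and correct ways to supply it.
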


\begin{remark}
When $X$ is singular, because $\mathrm{rank}\;\mathrm{Pic}(\cM_{X}(r, \cO))$ may jump (Remark \ref{rem:Picforsingularcurve}), Corollary \ref{cor:conformalblockCoxring} is not true in general. In this case $\VV_{X}^{\dagger}$ is a proper subalgebra of $\Cox(\cM_{X}(r, \cO))$.
\end{remark}

\section{Codimension estimation and a consequence}\label{sec:codimension}

To prove the main theorem for smooth curves with positive genus, we need to identify the Cox ring of the moduli stack $\cM_{X}(r, \cO)$ with that of the coarse moduli space $\bM_{X}(r, \cO, \ba)$ for some parabolic weight $\ba$. We adopt the idea of Sun in \cite[Section 5]{Sun00} with some refinement.

\subsection{A construction of the moduli space of parabolic bundles}

We fix a pointed smooth curve $X = (C, \bp = (p^{i}))$ of genus $g$ and an ample line bundle $\cO_{C}(1)$ of degree one on $C$. Let $m \in \ZZ_{\ge 0}$. Let $\bQ(m)$ be the Quot scheme parametrizing quotients $\cO(-m)^{\nu(m)} \to F \to 0$ of rank $r$ and degree zero coherent sheaves, where $\nu(m) = rm + r(1-g)$. Let $\Omega(m)$ be the locally closed subvariety of $\bQ(m)$, which parametrizes quotients $\cO(-m)^{\nu(m)} \to F \to 0$ such that $\rH^{1}(C, F(m)) = 0$, $\cO^{\nu(m)} \to F(m)$ induces an isomorphism $\rH^{0}(C, \cO^{\nu(m)}) \cong \rH^{0}(C, F(m))$, and $\wedge^{r}F \cong \cO$. Let $\cO_{\Omega(m) \times C}(-m)^{\nu(m)} \to \cF \to 0$ be the universal quotient on $\Omega(m) \times C$ and $\Fl(\cF|_{p^{i}})$ be the full-flag bundle at $p^{i}$. Let
\[
	\bR(m) := (\times_{\Omega(m)}\Fl(\cF|_{p^{i}}))_{1 \le i \le n}
\]
be the fiber product over $\Omega(m)$ and let $\pi_{m} : \bR(m) \to \Omega(m)$ be the projection. By abuse of notation, we denote $\pi_{m}^{*}\cF$ by $\cF$. Naturally, $\bR(m)$ parametrizes parabolic bundles and we have a universal family $(\cF, \{\cW_{\bullet}^{i}\})$.

Fix a general parabolic weight $\ba$. We denote the open subset parametrizing $\ba$-stable parabolic bundles by $\bR(m)^{s}(\ba) \subset \bR(m)$. For $m \gg 0$, $	\underline{\cM}_{X}(r, \cO, \ba) \cong [\bR(m)^{s}(\ba)/\GL_{\nu(m)}] \subset [\bR(m)/\GL_{\nu(m)}]$. Roughly, $[\bR(m)/\GL_{\nu(m)}]$ (resp. $[\bR(m)^{s}(\ba)/\GL_{\nu(m)}]$) may be thought of as a stack of parabolic bundles (resp. $\ba$-stable parabolic bundles) with bounded Castelnuovo-Mumford regularity.

One may also slightly modify this construction for the stack $\cM_{X}(r, \cO, \ba)$, too. Let $q : \bR(m) \times C \to \bR(m)$ be the first projection and let $\cO(-m)^{\nu(m)} \to \cF \to 0$ be the universal quotient. Consider the sheaf $q_{*}\wedge^{r}\cF$ on $\bR(m)$. Let $\bS(m)$ be the complement of the zero section of $q_{*}\wedge^{r}\cF$ over $\bR(m)$. Naturally, $\bS(m)$ has a $\CC^{*}$-torsor structure and parametrizes quotients $\cO(-m)^{\nu(m)} \to \cF \to 0$ with an isomorphism $\wedge^{r}\cF \cong \cO$. Let $\bS(m)^{s}(\ba)$ be the locus of $\ba$-stable parabolic bundles. Then for $m \gg 0$,
\[
	\cM_{X}(r, \cO, \ba) \cong [\bS(m)^{s}(\ba)/\GL_{\nu(m)}]\subset [\bS(m)/\GL_{\nu(m)}].
\]

There is a canonical open embedding $[\bS(m)/\GL_{\nu(m)}] \hookrightarrow [\bS(m+1)/\GL_{\nu(m+1)}]$, which maps $(F, \{W_{\bullet}^{i}\})$ to itself. Furthermore, any $(F, \{W_{\bullet}^{i}\}, \phi) \in \cM_{X}(r, \cO)$ is in $[\bS(m)/\GL_{\nu(m)}]$ for some $m$. Thus we obtain:

\begin{lemma}
\[
	\cM_{X}(r, \cO) \cong
    \varinjlim_{m}[\bS(m)/\GL_{\nu(m)}].
\]
\end{lemma}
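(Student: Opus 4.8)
The plan is to realize each $[\bS(m)/\GL_{\nu(m)}]$ as an open substack of $\cM_{X}(r, \cO)$ in a manner compatible with the transition maps, and then to identify the colimit with the union of these open substacks, which exhausts $\cM_{X}(r, \cO)$ by the second fact recorded just above the statement.

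First I would construct a morphism $\Phi_{m} \colon [\bS(m)/\GL_{\nu(m)}] \to \cM_{X}(r, \cO)$. By construction $\bS(m)$ carries the universal quotient $\cO(-m)^{\nu(m)} \to \cF \to 0$ together with the flags $\{\cW_{\bullet}^{i}\}$ and a trivialization $\wedge^{r}\cF \cong \cO$ coming from the chosen nonzero section of $q_{*}\wedge^{r}\cF$; this is exactly a family of parabolic bundles with trivialized determinant, hence a morphism $\bS(m) \to \cM_{X}(r, \cO)$. The $\GL_{\nu(m)}$-action only alters the presentation $\cO(-m)^{\nu(m)} \to \cF$ and leaves the underlying parabolic bundle and its trivialization untouched, so this morphism is $\GL_{\nu(m)}$-invariant and descends to $\Phi_{m}$.

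The key step is to show that $\Phi_{m}$ is an open immersion onto the substack $\mathcal{U}_{m} \subset \cM_{X}(r, \cO)$ of those $(E, \{W_{\bullet}^{i}\}, \phi)$ with $\rH^{1}(C, E(m)) = 0$ and $E(m)$ globally generated. That the image lands in $\mathcal{U}_{m}$ is immediate, since for any point of $\bS(m)$ the sheaf $F(m)$ is a globally generated quotient of $\cO^{\nu(m)}$ with $\rH^{1}(C, F(m)) = 0$. For essential surjectivity onto $\mathcal{U}_{m}$, given such an $(E, \ldots, \phi)$, the vanishing $\rH^{1}(C, E(m)) = 0$ forces $h^{0}(C, E(m)) = \chi(E(m)) = \nu(m)$ (using $\deg E = 0$), and a basis of $\rH^{0}(C, E(m))$ together with global generation produces a surjection $\cO^{\nu(m)} \to E(m)$, i.e.\ a point of $\bS(m)$ over $(E, \ldots, \phi)$. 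Finally, any two such presentations differ by a unique element of $\GL_{\nu(m)} = \GL(\rH^{0}(\cO^{\nu(m)}))$, since each induces an isomorphism $\CC^{\nu(m)} = \rH^{0}(\cO^{\nu(m)}) \xrightarrow{\cong} \rH^{0}(E(m))$; the same computation identifies the automorphism group of a point with $\mathrm{Aut}(E, \{W_{\bullet}^{i}\}, \phi)$. Thus $\Phi_{m}$ is a representable monomorphism, and since $\mathcal{U}_{m}$ is open (both the $\rH^{1}$-vanishing and the global generation of $E(m)$ are open conditions in families), it is an open immersion with image $\mathcal{U}_{m}$.

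To conclude, I would check that $\Phi_{m+1}$ precomposed with the open embedding $[\bS(m)/\GL_{\nu(m)}] \hookrightarrow [\bS(m+1)/\GL_{\nu(m+1)}]$ agrees with $\Phi_{m}$; as this embedding sends a parabolic bundle to itself, the $\mathcal{U}_{m}$ form an increasing chain of open substacks whose transition maps are the inclusions $\mathcal{U}_{m} \hookrightarrow \mathcal{U}_{m+1}$. By the second recorded fact, every object of $\cM_{X}(r, \cO)$ lies in some $\mathcal{U}_{m}$, so $\bigcup_{m}\mathcal{U}_{m} = \cM_{X}(r, \cO)$. Since the colimit of a chain of open substacks along open immersions is their union, we obtain $\varinjlim_{m}[\bS(m)/\GL_{\nu(m)}] \cong \bigcup_{m}\mathcal{U}_{m} = \cM_{X}(r, \cO)$. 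I expect the main obstacle to be the open-immersion claim for $\Phi_{m}$: the monomorphism property is the usual Quot-scheme bookkeeping, but one must verify that $\mathcal{U}_{m}$ is genuinely open and that $\Phi_{m}$ is flat (equivalently, an isomorphism onto $\mathcal{U}_{m}$) rather than merely a bijection on points, which is precisely where the defining conditions of $\Omega(m)$, especially the isomorphism $\rH^{0}(\cO^{\nu(m)}) \cong \rH^{0}(F(m))$, do the work.
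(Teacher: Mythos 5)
Your proposal is correct and follows essentially the same route as the paper: the paper's proof consists precisely of the two facts you cite (compatible open embeddings of the quotient stacks, sending a parabolic bundle to itself, plus the exhaustion of $\cM_{X}(r, \cO)$ by these charts), from which the colimit identification follows. Your write-up simply fills in the standard details the paper leaves implicit, namely the torsor/frame-bundle argument identifying $[\bS(m)/\GL_{\nu(m)}]$ with the open substack of bundles $E$ with $\rH^{1}(C, E(m)) = 0$ and $E(m)$ globally generated.
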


\begin{corollary}\label{cor:globalsectionlimit}
Let $L$ be any line bundle on $\cM_{X}(r, \cO)$ and let $i_{m} : [\bS(m)/\GL_{\nu(m)}] \hookrightarrow \cM_{X}(r, \cO)$ be the inclusion. Then
\[
	\rH^{0}(\cM_{X}(r, \cO), L) \cong \varprojlim_{m}
	\rH^{0}([\bS(m)/\GL_{\nu(m)}], i_{m}^{*}L).
\]
\end{corollary}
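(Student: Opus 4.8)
The plan is to deduce this directly from the preceding Lemma, which presents $\cM_{X}(r, \cO)$ as the increasing union $\varinjlim_{m}[\bS(m)/\GL_{\nu(m)}]$ of the open substacks $U_{m} := [\bS(m)/\GL_{\nu(m)}]$, the transition maps $j_{m} : U_{m}\hookrightarrow U_{m+1}$ being the canonical open immersions with $i_{m} = i_{m+1}\circ j_{m}$. Since $L$ is a sheaf and open immersions of stacks form covers, the assignment $V \mapsto \rH^{0}(V, L|_{V})$ on open substacks of $\cM_{X}(r, \cO)$ satisfies the identity and gluing axioms; as the $U_{m}$ form such a cover, the corollary will follow from the sheaf axiom once the nested structure of the cover is taken into account.

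First I would record the comparison map. The restrictions $s\mapsto i_{m}^{*}s$ are compatible with the $j_{m}$, so they assemble into a homomorphism
\[
	\rho : \rH^{0}(\cM_{X}(r, \cO), L) \longrightarrow \varprojlim_{m}\rH^{0}(U_{m}, i_{m}^{*}L),
\]
the inverse system on the right having restriction maps $\rH^{0}(U_{m+1}, i_{m+1}^{*}L) \to \rH^{0}(U_{m}, i_{m}^{*}L)$. Injectivity of $\rho$ is the identity axiom: a global section whose restriction to every $U_{m}$ vanishes must vanish, since the $U_{m}$ cover $\cM_{X}(r, \cO)$.

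For surjectivity I would invoke the gluing axiom, and here the one genuinely useful observation is that the cover is totally ordered by inclusion, so that $U_{m}\cap U_{m'} = U_{\min(m, m')}$. Consequently the gluing (cocycle) condition for a family $(s_{m})$ collapses to the single requirement $s_{m+1}|_{U_{m}} = s_{m}$, which is exactly the defining condition of $\varprojlim_{m}\rH^{0}(U_{m}, i_{m}^{*}L)$. Thus every element of the inverse limit glues to a unique global section restricting to the given $s_{m}$, so $\rho$ is an isomorphism.

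The only input beyond this formal argument is the exhaustiveness of the cover, namely that every object $(E, \{W_{\bullet}^{i}\}, \phi)$ of $\cM_{X}(r, \cO)$ lies in some $U_{m}$; this is the content recalled just before the Lemma and rests on Castelnuovo-Mumford regularity, since $E(m)$ becomes globally generated with $\rH^{1}(C, E(m)) = 0$ for $m \gg 0$. I do not expect a serious obstacle here: once exhaustiveness is granted, the statement is a direct application of the sheaf property of $\rH^{0}$ to a nested open cover, the inverse limit arising precisely because pairwise intersections are again members of the cover.
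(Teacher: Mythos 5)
Your proposal is correct and is essentially the paper's own argument: the paper also builds the comparison map from the universal property of the inverse limit and then observes that bijectivity is immediate because the transition maps $[\bS(m)/\GL_{\nu(m)}] \hookrightarrow [\bS(m+1)/\GL_{\nu(m+1)}]$ are open embeddings exhausting $\cM_{X}(r, \cO)$. You have merely spelled out the sheaf-axiom details (injectivity from the identity axiom, surjectivity from gluing over a nested cover where the cocycle condition collapses) that the paper compresses into ``the bijectivity is immediate.''
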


\begin{proof}
By the universal property of the inverse limit, we have a morphism
\[
	\rH^{0}(\cM_{X}(r, \cO), L) \to
	\varprojlim_{m}\rH^{0}([\bS(m)/\GL_{\nu(m)}], i_{m}^{*}L).
\]
Since $[\bS(m)/\GL_{\nu(m)}] \to [\bS(m+1)/\GL_{\nu(m+1)}]$ is an open embedding for all $m$, the bijectivity is immediate.
\end{proof}

\begin{corollary}\label{cor:picardgplimit}
Under the same notation, there is an isomorphism
\[
	\Pic(\cM_{X}(r, \cO)) \cong \varprojlim_{m}
	\Pic([\bS(m)/\GL_{\nu(m)}]).
\]
\end{corollary}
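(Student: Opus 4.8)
The plan is to show that the natural map in the statement is the restriction map and that it is an isomorphism. It arises as follows: by the preceding lemma, $\cM_{X}(r, \cO) = \varinjlim_{m}U_{m}$ is an increasing union of the open substacks $U_{m} := [\bS(m)/\GL_{\nu(m)}]$, so every line bundle on $\cM_{X}(r, \cO)$ restricts to a system $(L|_{U_{m}})_{m}$ that is compatible under the open-immersion restrictions $i_{m}^{*}$, hence lies in $\varprojlim_{m}\Pic(U_{m})$. This defines the map, and the content is its bijectivity.

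First I would isolate the purely formal part. Since the $U_{m}$ form a rising union of open substacks indexed by $m \in \ZZ_{\ge 0}$, and $\Pic = \rH^{1}(-, \cO^{*})$, cohomology of $\cO^{*}$ on the colimit sits in the standard Milnor exact sequence
\[
0 \to \varprojlim{}^{1}_{m}\rH^{0}(U_{m}, \cO^{*}) \to \Pic(\cM_{X}(r, \cO)) \to \varprojlim_{m}\Pic(U_{m}) \to 0.
\]
The degree-zero analogue of this sequence, in which the $\varprojlim^{1}$ correction is automatically absent, is exactly Corollary \ref{cor:globalsectionlimit}. In particular the restriction map is automatically surjective, and injectivity is equivalent to the vanishing of $\varprojlim^{1}_{m}\rH^{0}(U_{m}, \cO^{*})$. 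Concretely, a class in this $\varprojlim^{1}$ measures the failure to choose, compatibly under restriction, global trivializations of a line bundle that is trivial on every $U_{m}$; so the entire subtlety of passing from the rising-union presentation to $\Pic$ is concentrated in this one term.

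The crux, then, is to prove $\varprojlim^{1}_{m}\rH^{0}(U_{m}, \cO^{*}) = 0$, which I would obtain from the Mittag-Leffler criterion by identifying the tower of unit groups explicitly. I claim $\rH^{0}(U_{m}, \cO^{*}) = \CC^{*}$ for every $m$, with all transition maps equal to the identity of $\CC^{*}$. Indeed $\rH^{0}(U_{m}, \cO) = \rH^{0}(\bS(m), \cO)^{\GL_{\nu(m)}}$, and I would show these invariants are just the constants: $\bS(m)$ is a $\CC^{*}$-bundle over the flag bundle $\bR(m)$, which is proper over the open subscheme $\Omega(m)$ of the projective Quot scheme $\bQ(m)$, and the $\GL_{\nu(m)}$-action together with the $\ZZ$-grading coming from the $\CC^{*}$-bundle structure cuts the invariant functions down to $\CC$. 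Granting $\rH^{0}(U_{m}, \cO^{*}) = \CC^{*}$ with identity restrictions, the tower trivially satisfies Mittag-Leffler, so $\varprojlim^{1} = 0$ and the restriction map is an isomorphism.

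The main obstacle is precisely this last vanishing, namely showing that the invariant global functions on $\bS(m)$ are constant; the tower of Picard groups themselves is well-behaved, so all the difficulty lives in controlling global units on the finite-type quotient stacks $U_{m}$. If one prefers to avoid $\varprojlim^{1}$ altogether, injectivity can instead be argued directly: a line bundle $L$ trivial on each $U_{m}$ has $\rH^{0}(\cM_{X}(r, \cO), L) = \varprojlim_{m}\rH^{0}(U_{m}, \cO)$ by Corollary \ref{cor:globalsectionlimit}, and one checks that a generator restricts to a nowhere-vanishing section on each $U_{m}$, hence is nowhere vanishing on $\cM_{X}(r, \cO)$, so $L$ is trivial -- but this again reduces to understanding $\rH^{0}(U_{m}, \cO)$.
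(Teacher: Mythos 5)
Your overall route is genuinely different from the paper's and structurally sound. The paper does not use the Milnor sequence: it proves surjectivity of $r \colon \Pic(\cM_{X}(r, \cO)) \to \varprojlim_{m}\Pic([\bS(m)/\GL_{\nu(m)}])$ from smoothness of the stack --- each restriction $i_{m}^{*}$ is surjective by \cite[Corollary 3.4]{Hei10} --- together with Zorn's lemma, and it proves injectivity by observing that if every $i_{m}^{*}L$ is trivial, then $L$ and $L^{*}$ acquire non-vanishing global sections via Corollary \ref{cor:globalsectionlimit}, forcing $L$ to be trivial. Your packaging trades the smoothness input for the formal fact that $\Pic$ of the rising union surjects onto $\varprojlim_{m}\Pic$ with kernel $\varprojlim^{1}_{m}\rH^{0}(U_{m}, \cO^{*})$, writing $U_{m} = [\bS(m)/\GL_{\nu(m)}]$; this is legitimate here because, by the lemma preceding the corollary, a line bundle on $\cM_{X}(r, \cO)$ \emph{is} a compatible system on the $U_{m}$, and $\varprojlim^{i}$ vanishes for $i \ge 2$ on towers. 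In one respect your accounting is even sharper than the paper's: the paper's injectivity step tacitly requires choosing a \emph{compatible} system of trivializations across the $U_{m}$, which is exactly the torsor/$\varprojlim^{1}$ issue you make explicit.

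However, there is a genuine gap at the step you yourself call the crux: the claim $\rH^{0}(U_{m}, \cO^{*}) = \CC^{*}$. Your reductions are fine as far as they go: the central $\CC^{*} \subset \GL_{\nu(m)}$ acts on the fibers of the $\CC^{*}$-bundle $\bS(m) \to \bR(m)$ with weight $r$ (it rescales the trivialization of $\wedge^{r}F$), so invariance kills every nonzero graded piece, and since the flag bundle $\bR(m) \to \Omega(m)$ is proper with connected fibers, you are left with $\GL_{\nu(m)}$-invariant functions on $\Omega(m)$. But at that point you simply assert the answer is $\CC$, and neither tool you invoked applies: $\Omega(m)$ is a locally closed, merely quasi-projective subscheme of the Quot scheme $\bQ(m)$ --- neither proper nor affine --- and group invariance alone never forces constancy of functions. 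Completing the argument requires genuinely additional inputs along the lines of: $\Omega(m)$ is irreducible for $m \gg 0$; the semistable locus for some weight is nonempty (Corollary \ref{cor:nonempty} when $g \ge 1$; for the trivial weight the bundle $\cO^{r}$ works in any genus) and hence dense; invariant functions on the semistable locus descend to the \emph{projective} good quotient $\bM_{X}(r, \cO, \ba)$ and so are constant there; and constancy then spreads to all of $\Omega(m)$ by irreducibility. None of this appears in your sketch, and your fallback direct-injectivity argument at the end reduces to the same unproven statement about $\rH^{0}(U_{m}, \cO)$. So the architecture stands, but the proof is incomplete precisely at its declared load-bearing point; flagging the obstacle is not the same as surmounting it.
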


\begin{proof}
As before, there is a functorial morphism
\[
	r : \Pic(\cM_{X}(r, \cO)) \to \varprojlim_{m}
	\Pic([\bS(m)/\GL_{\nu(m)}]).
\]
Since $\cM_{X}(r, \cO)$ is a smooth stack, each pull-back $i_{m}^{*} : \Pic(\cM_{X}(r, \cO)) \to \Pic([\bS(m)/\GL_{\nu(m)}])$ is surjective (\cite[Corollary 3.4]{Hei10}). The surjectivity of $r$ follows from Zorn's lemma. Suppose that $L \in \ker r$. Then for each $m$, $i_{m}^{*}L$ is trivial and both $i_{m}^{*}L$ and its dual have non-vanishing global sections. By Corollary \ref{cor:globalsectionlimit}, those sections are extended to non-vanishing global sections of $L$ and $L^{*}$ on $\cM_{X}(r, \cO)$. Thus $L$ is trivial and $r$ is injective.
\end{proof}

\subsection{Codimension estimation}\label{ssec:codimestimation}

The aim of this section is to prove the following result, based on the argument of Sun (\cite[Section 5]{Sun00}). As in the previous section, $X = (C, \bp)$ is a smooth pointed curve of genus $g$, and $\ba$ is a general parabolic weight.

\begin{definition}\label{def:weightac}
Let $\ba_{c} = (a_{c \bullet}^{i})$ be a parabolic weight where $a_{c \bullet}^{i} = \frac{1}{r}(r-1, r-2, \cdots, 1)$.
\end{definition}

\begin{proposition}\label{prop:codimestimation}
Let $\bS(m)^{us}(\ba) := \bS(m) \setminus \bS(m)^{s}(\ba)$. Suppose that $m \gg 0$. Let $\mathrm{codim}\;\bS(m)^{us}(\ba)$ be the codimension of $\bS(m)^{us}(\ba)$ in $\bS(m)$. Recall that $g$ is the genus of $C$, $r$ is the rank of the parabolic bundles, and $n$ is the number of parabolic points. Then:
\begin{enumerate}
\item $\mathrm{codim}\; \bS(m)^{us}(\ba) \ge (r-1)(g-1)+1$. In particular, if $g \ge 2$, $\mathrm{codim}\; \bS(m)^{us}(\ba) \ge 2$.
\item Suppose that $g = 1$ and $n > r$. Let $\ba$ be a general weight very close to $\ba_{c}$. Then $\mathrm{codim}\; \bS(m)^{us}(\ba) \ge 2$.
\end{enumerate}
\end{proposition}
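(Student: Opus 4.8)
The plan is to stratify $\bS(m)^{us}(\ba)$ by the discrete type of the maximal destabilizing parabolic subbundle and to bound each stratum's codimension from below. First I would transfer the problem to the stack $\cM_X(r,\cO)$: since $\bS(m)\to[\bS(m)/\GL_{\nu(m)}]$ is a smooth $\GL_{\nu(m)}$-torsor and, by the Lemma above, $[\bS(m)/\GL_{\nu(m)}]$ is an open substack of $\cM_X(r,\cO)$ which for $m\gg 0$ contains every bundle of the finitely many relevant types, the codimension of $\bS(m)^{us}(\ba)$ in $\bS(m)$ equals the codimension of the unstable substack in $\cM_X(r,\cO)$. As $\ba$ is general, the non-stable locus is the properly unstable locus, so each unstable $\cE$ has a unique saturated maximal destabilizing parabolic subbundle $\cF$, sitting in an exact sequence $0\to\cF\to\cE\to\cQ\to 0$ of parabolic bundles with $\cF$ semistable. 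The type $\tau=(s,d,\{J^i\})$ records $s=\rk\cF$, $d=\deg\cF$, and the positions $J^i\subset[r]$ of the induced flag of $\cF$ inside that of $\cE$ at each $p^i$.

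I would then compute $\codim\cM^\tau$ by deformation theory. The dimension of a stack of parabolic bundles is $-\chi_{\mathrm{par}}$ of its parabolic endomorphism complex, and filtration-preserving homomorphisms of $\cE$ form $\Hom_{\mathrm{par}}(\cF,\cF)\oplus\Hom_{\mathrm{par}}(\cQ,\cQ)\oplus\Hom_{\mathrm{par}}(\cQ,\cF)$; combined with the bi-additivity of $\chi_{\mathrm{par}}(\cE,\cE)$ over the two-step filtration, the diagonal terms and the $\chi_{\mathrm{par}}(\cQ,\cF)$ term cancel and leave
\[
	\codim\cM^\tau = -\chi_{\mathrm{par}}(\cF,\cQ) = \dim\Ext^1_{\mathrm{par}}(\cF,\cQ)-\dim\Hom_{\mathrm{par}}(\cF,\cQ).
\]
(The trace-free condition for fixed determinant removes one dimension from each side and does not change the codimension.) Parabolic Riemann--Roch gives $\chi_{\mathrm{par}}(\cF,\cQ)=\chi(\cHom(F,Q))-\sum_i M^i$, where $\chi(\cHom(F,Q))=-rd+s(r-s)(1-g)$ and $M^i:=\#\{(j,k): j\in J^i,\ k\notin J^i,\ j<k\}$ counts the fibre homomorphism directions at $p^i$ that fail to be parabolic. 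Hence
\[
	\codim\cM^\tau = rd + s(r-s)(g-1) + \sum_{i=1}^n M^i.
\]

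The final step is arithmetic. With $A=\sum_{i,j}a_j^i$ and $A_J=\sum_i\sum_{j\in J^i}a_j^i$, the destabilizing inequality $\mu_{\bfb}(\cF)>\mu_{\ba}(\cE)$ reads $rd>sA-rA_J$. Writing $rA_J-sA=\sum_i\sum_{j\in J^i}\sum_{k\notin J^i}(a_j^i-a_k^i)$ and using $a_j^i-a_k^i\in(0,1)$ for $j<k$ and $a_j^i-a_k^i\in(-1,0)$ for $j>k$ gives $rA_J-sA<\sum_i M^i$. Adding the two strict inequalities yields $rd+\sum_i M^i>0$, an integer, hence $\ge 1$; therefore $\codim\cM^\tau\ge s(r-s)(g-1)+1\ge(r-1)(g-1)+1$, which is (1) (and is $\ge 2$ once $g\ge 2$). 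For (2), when $g=1$ the formula is the purely combinatorial $\codim\cM^\tau=rd+\sum_i M^i$, independent of the precise weight. At $\ba_{c}$ one has the exact identity $sA^i-rA_J^i=\tfrac{s(r-s)}{2}-M^i$, so for general $\ba$ close enough to $\ba_{c}$ a destabilizing type must satisfy $rd\ge sA-rA_J=\tfrac{ns(r-s)}{2}-\sum_i M^i$, giving $\codim\cM^\tau\ge\tfrac{ns(r-s)}{2}\ge\tfrac{n(r-1)}{2}$. The hypothesis $n>r$ forces $\tfrac{n(r-1)}{2}>\tfrac{r(r-1)}{2}\ge1$, so the integer codimension is $\ge 2$.

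The hardest part is the second step: justifying the codimension formula on the non-separated, non-finite-type stack $\cM_X(r,\cO)$ with fixed determinant, controlling the passage from $\bS(m)$ and the finiteness of the relevant types, and---most delicately---identifying the correct parabolic correction $M^i$ via parabolic Riemann--Roch so that the combinatorics in the last step close up. The $g=1$ case is the sharp one: part (1) alone yields only codimension $\ge1$, and the improvement to $\ge2$ depends essentially on the identity at $\ba_{c}$ together with $n>r$, so it genuinely requires $\ba$ to be near $\ba_{c}$ and cannot be extracted from the general bound.
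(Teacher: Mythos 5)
Your proposal is correct, and the numbers it produces coincide exactly with the paper's, but the technical core is genuinely different. The paper (following Sun) bounds the codimension of each unstable stratum by building explicit finite-type parameter spaces: Quot schemes $\bQ_{1},\bQ_{2}$ for the destabilizing sub and quotient, the extension bundles $P_{h}=\PP(R^{1}\pi_{*}\cF)$, Schubert subvarieties $Y^{i}$ of flag bundles, and a frame bundle $F(Y)$ mapping onto the stratum of $\bR(m)$, with the bound \eqref{eqn:codim} coming from counting dimensions of these spaces and of the fibers of $F(Y)\to\bR(m)$. You instead compute the stratum codimension intrinsically as $-\chi_{\mathrm{par}}(\cF,\cQ)$ for the Shatz/HN stratification of the stack, evaluated by parabolic Riemann--Roch. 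The two computations agree: your $M^{i}$ equals the paper's Schubert codimension $\codim Y^{i}=\sum_{j=1}^{r_{1}}(r-r_{1}-J_{j}^{i}+j)$ in \eqref{eqn:codimSchubert}, and your telescoping inequality $rA_{J}-sA=\sum_{i}\sum_{j\in J^{i}}\sum_{k\notin J^{i}}(a_{j}^{i}-a_{k}^{i})<\sum_{i}M^{i}$ is precisely the combinatorial positivity the paper quotes as \cite[Lemma 5.2]{Sun00}, here proved inline. The endgames also differ in presentation: for (1) you use integrality of $rd+\sum_{i}M^{i}$, and for (2) the exact identity $sA-rA_{J}=\tfrac{ns(r-s)}{2}-\sum_{i}M^{i}$ at $\ba_{c}$, where the paper argues that each per-point term is a perturbed multiple of $1/r$ and sums $n>r$ of them; your version is cleaner and in fact gives codimension $\ge 2$ whenever $ns(r-s)>2$, slightly more than claimed. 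What the paper's route buys is that every dimension count happens on a finite-type scheme, where it is elementary; what yours buys is an exact per-stratum formula with no auxiliary spaces, at the cost of the step you yourself flag as hardest --- justifying the locally closed stratification, the extension-stack description, boundedness of the relevant types, and the passage between $\bS(m)$ and the non-finite-type stack $\cM_{X}(r,\cO)$. That step is the parabolic analogue of the standard Atiyah--Bott/Shatz codimension formula and would need to be written out or referenced to make the proof complete, but as a strategy it is sound.
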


\begin{proof}
Because the proof is similar to that of \cite[Proposition 5.1]{Sun00}, here we only give a brief outline. For details, see \cite{Sun00}.

It is clear that the codimension of $\bR(m)^{us}(\ba) := \bR(m)\setminus \bR(m)^{s}(\ba)$ in $\bR(m)$ is equal to that of $\bS(m)^{us}(\ba)$ in $\bS(m)$. So for notational simplicity, we calculate the codimension of $\bR(m)^{us}$ in $\bR(m)$. Note that $\dim \bR(m) = r^{2}(g-1) + \nu(m)^{2} + nr(r-1)/2$.

Let $\cE = (E, \{W_{\bullet}^{i}\}) \in \bR(m)^{us}(\ba)$. Then there is a unique maximal destabilizing subbundle $\cE_{1} = (E_{1}, \{W|_{E_{1} \bullet}^{i}\})$ of rank $r_{1}$ and degree $d_{1}$. $E$ fits into the exact sequence
\[
	0 \to E_{1} \to E \to E_{2} \to 0.
\]
Let $r_{2}$ (resp. $d_{2}$) be the rank (resp. degree) of $E_{2}$, so $r = r_{1}+ r_{2}$ and $d_{1} + d_{2} = 0$. There is a collection $\{J^{i}\}$ of subsets $J^{i} \subset [n]$ with $|J^{i}| = r_{1}$ such that
\[
	\mu_{\bfb}(\cE_{1}) = \frac{d +
	\sum_{i=1}^{n}\sum_{j \in J^{i}}a_{j}^{i}}{r_{1}}>\frac{\sum_{i=1}^{n}\sum_{j=1}^{r} a_{j}^{i}}{r}=\mu_{\ba}(\cE).
\]

We construct a variety $F(Y)$ parametrizing such $\cE$ as the following. Let $\bQ_{k}$ ($k = 1, 2$) be the Quot scheme that parametrizes rank $r_{k}$, degree $d_{k}$ quotients $\cO(-m_{k})^{\nu_{k}} \to E_{k} \to 0$. Here $\nu_{k} = r_{k}m_{k} + r_{k}(1-g)$, and $m_{k}$ is taken sufficiently large so that $\rH^{1}(C, E_{k}(m_{k})) = 0$ and $\rH^{0}(C, E_{k}(m_{k})) \cong \CC^{\nu_{k}}$. Let $\cF_{k}$ be the universal quotient on $\bQ_{k} \times C$. Note that there are morphisms $\bQ_{k} \to \Pic^{d_{k}}(C)$ which map $\cO(-m_{k})^{\nu_{k}} \to E_{k} \to 0$ to $(-1)^{k}\det E_{k}$. Let $B = \bQ_{1} \times_{\Pic^{d_{2}}(C)} \bQ_{2}$ and $\cF := \cF_{2}^{\vee} \otimes \cF_{1}$ on $B \times C$. For the projection $\pi : B \times C \to B$, set
\[
	B_{h} := \{x \in B \;|\; h^{1}(\pi^{-1}(x), \cF|_{\pi^{-1}(x)})
	= h\}.
\]
Then $B_{h}$ is a locally closed subscheme of $B$ and $B = \cup_{h \ge 0}B_{h}$. Note that over each $B_{h}$, $R^{1}\pi_{*}\cF$ is locally free of rank $h$.

We define $P_{h}$ as:
\begin{enumerate}
\item[(1)] $P_{0} = B$ and $\cF^{0} := \cF_{1}\oplus \cF_{2}$;
\item[(2)] for $h > 0$, $P_{h} := \PP (R^{1}\pi_{*}\cF)$ and $\cF^{h}$ is the universal extension $0 \to \cF_{1} \otimes \cO(1) \to \cF^{h} \to \cF_{2} \to 0$.
\end{enumerate}

We denote the $j$-th element of $J^{i}$ by $J_{j}^{i}$ and set $J_{0}^{i} = 0$. For each $i \in [n]$, let $Y^{i}$ be the subvariety of the flag bundle $\Fl(\cF^{h}|_{p^{i}})$ defined by
\[
	Y^{i} = \{W_{\bullet}^{i}\in \Fl(\cF^{h}|_{p^{i}})\;|\;
	\dim (W_{j}^{i} \cap E_{1}|_{p^{i}}) = d \mbox{ if and only if }
	J_{d}^{i} \le j < J_{d+1}^{i}\}.
\]
Since this is a fiber bundle over $P_{h}$ whose fiber is a Schubert variety, we may compute its codimension, which is
\begin{equation}\label{eqn:codimSchubert}
	\sum_{j=1}^{r_{1}}(r-r_{1}-J_{j}^{i}+j).
\end{equation}

Finally, let $Y = (\times_{P_{h}}Y^{i})_{1 \le i \le n}$ and let $F(Y)$ be the frame bundle of the pull-back of $\cF^{h}$ over $Y$. Then $F(Y)$ parametrizes triples consisting of a framed unstable parabolic bundle, a framed destabilizing bundle and a framed quotient bundle. By measuring the dimension of $F(Y)$ and that of a general fiber of $F(Y) \to \bR(m)$, one can compute the codimension of the image to be at least
\begin{equation}\label{eqn:codim}
	r_{1}r_{2}(g-1) + \sum_{i=1}^{n} \mathrm{codim}\; Y^{i} + rd_{1}
\end{equation}
(\cite[p.513]{Sun00}).

Any unstable parabolic bundle is in the image of $F(Y)$ with some $r_{1}$, $d_{1}$, $\{J^{i}\}$, and some $h$. Since there are only countably many choices of those numerical data, one can conclude that the codimension of $\bR(m)^{us}(\ba)$ is at least the minimum of \eqref{eqn:codim}.

Since $\cE_{1}$ is the maximal destabilizing subbundle, $\mu_{\bfb}(\cE_{1}) > \mu_{\ba}(\cE)$, or equivalently,
\begin{equation}\label{eqn:boundofrd}
	rd_{1} > \sum_{i=1}^{n}\left(\sum_{j =1}^{r-1}r_{1}a_{j}^{i}
	- \sum_{j \in J^{i}}ra_{j}^{i}\right).
\end{equation}
By combining \eqref{eqn:codimSchubert}, \eqref{eqn:codim}, and \eqref{eqn:boundofrd}, we can conclude that
\[
	\mathrm{codim}\; \bR(m)^{us}(\ba) > (r-1)(g-1) + \sum_{i=1}^{n}
	\left(\sum_{j=1}^{r_{1}}(r-r_{1}-J_{j}^{i}+j) +
	\sum_{j=1}^{r-1}r_{1}a_{j}^{i} - \sum_{j \in J^{i}}ra_{j}^{i}\right).
\]
A combinatorial computation (\cite[Lemma 5.2]{Sun00}) says that the sum in the big parenthesis on the right hand side is strictly positive. So Item (i) follows immediately. For Item (ii), note that each term on the right hand side is either an integer or a multiple of the same unit, which is a slight perturbation of $\frac{1}{r}$. Therefore the number inside the parenthesis is at least $\frac{1}{r} - \epsilon$ for some $\epsilon > 0$. If $n > r$, then the right hand side is larger than one, so we have the same conclusion.
\end{proof}

\begin{remark}\label{rmk:codimofsslocus}
By using the same idea, even in the case of non-general weight $\ba$, under the same assumption ($g \ge 2$ or $g = 1$, $n > r$, and $\ba$ being close to $\ba_{c}$), one may check that the codimension of the strictly semistable locus $\bS(m)^{ss}(\ba) \setminus \bS(m)^{s}(\ba)$ in $\bS(m)^{ss}(\ba)$ is at least two. The only difference is that the strict inequality in \eqref{eqn:boundofrd} must be replaced by the equality. The other steps of the computation are identical.
\end{remark}

\begin{definition}\label{def:dominantweight}
A general parabolic weight $\ba$ is \emph{dominant} if the codimension of $\bS(m)^{us}(\ba)$ in $\bS(m)$ is at least two for some $m \gg 0$.
\end{definition}

Proposition \ref{prop:codimestimation} provides the existence of dominant weights for all $g \ge 1$. Note that when $g \ge 2$, every general weight is dominant.

Another immediate consequence is the non-emptiness of the stack.

\begin{corollary}\label{cor:nonempty}
Suppose that $g \ge 1$. Then every parabolic weight is effective.
\end{corollary}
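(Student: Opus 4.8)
The plan is to derive effectivity of every weight directly from the codimension bound of Proposition \ref{prop:codimestimation}, treating general and non-general weights separately. Recall that $\ba$ is \emph{effective} exactly when $\cM_X(r, \cO, \ba) \ne \emptyset$, which under the quotient presentation means that the $\ba$-semistable locus $\bS(m)^{ss}(\ba) \subset \bS(m)$ is nonempty for $m \gg 0$. I first observe that $\bS(m)$ itself is nonempty for $m \gg 0$: the trivial bundle $\cO^{\oplus r}$ has determinant $\cO$, and for $m$ large its twist $\cO(m)^{\oplus r}$ is globally generated with vanishing $\rH^1$ and $h^0 = \nu(m)$, so $\cO^{\oplus r}$ together with any choice of flags and any isomorphism $\wedge^r \cO^{\oplus r} \cong \cO$ defines a point of $\bS(m)$.

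For a \emph{general} weight $\ba$ the claim is immediate. Since $g \ge 1$, Proposition \ref{prop:codimestimation}(1) gives $\codim \bS(m)^{us}(\ba) \ge (r-1)(g-1) + 1 \ge 1$, so the non-stable locus $\bS(m)^{us}(\ba) = \bS(m) \setminus \bS(m)^{s}(\ba)$ is a proper closed subset of $\bS(m)$. Hence $\bS(m)^{s}(\ba)$ is nonempty, and since stability and semistability coincide off the walls, $\bS(m)^{ss}(\ba) = \bS(m)^{s}(\ba) \ne \emptyset$. Thus every general weight is effective. Note that only part (1) of the proposition is needed: it holds for all general weights once $g \ge 1$, and we only require codimension at least one, so the hypothesis $n > r$ of part (2) plays no role here.

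For a \emph{non-general} weight $\ba \in \cW^0$ I pass to a nearby general weight. Only finitely many walls $H(s, d, \{J^i\})$ meet the compact polytope $\cW$, so $\ba$ lies in the closure of some open chamber $\mathcal{C}$ of their complement; choose a general $\ba' \in \mathcal{C}$. By the previous paragraph there is an $\ba'$-stable parabolic bundle $\cE = (E, \{W_\bullet^i\})$ with $\det E \cong \cO$, and I claim $\cE$ is $\ba$-semistable. Indeed, for a fixed proper parabolic subbundle $\cF \subset \cE$ the combinatorial type of the induced flags is independent of the weight, so the difference $\mu_{\bfb}(\cF) - \mu_{\ba}(\cE)$, viewed as a function of the weight variable, is affine-linear with vanishing locus exactly the wall $H(\rk F, \deg F, \{J^i\})$. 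This function therefore has constant sign on $\mathcal{C}$, and $\ba'$-stability forces it to be negative there; passing to the limit at $\ba \in \overline{\mathcal{C}}$ gives $\mu_{\bfb}(\cF) - \mu_{\ba}(\cE) \le 0$. As $\cF$ was arbitrary, $\cE$ is $\ba$-semistable, so $\cM_X(r, \cO, \ba) \ne \emptyset$ and $\ba$ is effective.

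The substantive input is entirely contained in Proposition \ref{prop:codimestimation}; the remaining ingredients are the elementary facts that parabolic slopes depend affine-linearly on the weight, that the walls are precisely where a subbundle's parabolic slope meets that of $\cE$, and that semistability is therefore a closed condition under wall-crossing. Consequently I expect no serious obstacle: general weights are settled by a dimension count and non-general weights by the limiting argument above. The only point deserving a little care is bookkeeping with the definition $\bS(m)^{us}(\ba) = \bS(m) \setminus \bS(m)^s(\ba)$, which for general $\ba$ coincides with the genuinely unstable locus precisely because stability and semistability agree in the interior of each chamber.
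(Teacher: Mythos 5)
Your proof is correct, and its first half coincides with the paper's entire argument: the paper disposes of the corollary in one sentence by citing Item (i) of Proposition \ref{prop:codimestimation} --- the unstable locus has positive codimension in $\bS(m)$, hence a semistable bundle exists --- without separating general from non-general weights. Where you genuinely diverge is the non-general case. The paper implicitly relies on the fact that the codimension estimate applies verbatim to the complement of the \emph{semistable} locus for an arbitrary weight, since the key inequality \eqref{eqn:boundofrd} is exactly the strict destabilization condition and nowhere uses generality of $\ba$ (compare Remark \ref{rmk:codimofsslocus}, where the analogous point is made for the strictly semistable locus). You instead perturb to a general weight $\ba'$ in a chamber whose closure contains $\ba$ and transport an $\ba'$-stable bundle to an $\ba$-semistable one using the affine-linearity of $\mu_{\bfb}(\cF)-\mu_{\ba}(\cE)$ in the weight variable; this is a clean, self-contained alternative whose merit is that it invokes Proposition \ref{prop:codimestimation} only under its stated standing hypothesis ($\ba$ general), whereas the paper's one-line proof tacitly extends the proposition beyond that hypothesis. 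Two micro-points: your explicit check that $\bS(m)\neq\emptyset$ (via $\cO^{\oplus r}$ with arbitrary flags) fills in something the paper leaves unsaid; and in the constant-sign step you should note the case where the hyperplane $H(\rk F, \deg F, \{J^i\})$ misses the polytope $\cW$ entirely --- then $f$ is nonvanishing on all of $\cW$, which is connected, so the same conclusion holds --- since your phrasing quietly assumes the vanishing locus is one of the finitely many walls cutting $\cW$.
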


\begin{proof}
Item (i) of Proposition \ref{prop:codimestimation} shows that the codimension of the unstable locus is positive. This implies that there must be a semistable bundle. Thus $\cM_{X}(r, \cO, \ba)$ is nonempty.
\end{proof}

\section{Finite generation}\label{sec:finitegeneration}

In this section we prove Theorem \ref{thm:mainthmintro} for a smooth pointed curve $X = (C, \bp) \in \cM_{g, n}$ with $g \ge 1$. The genus zero case was shown in \cite{MY17}.

\subsection{Comparison of Picard groups}\label{ssec:Picardgroup}

\begin{proposition}\label{prop:Picardgroupstackandcoarsemoduli}
Let $\ba$ be a general parabolic weight. Then $\Pic(\bM_{X}(r, \cO, \ba))$ is an index $r$ sublattice of $\Pic(\cM_{X}(r, \cO, \ba))$.
\end{proposition}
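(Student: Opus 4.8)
The plan is to relate the Picard group of the coarse moduli space $\bM_{X}(r, \cO, \ba)$ to that of the stack $\cM_{X}(r, \cO, \ba)$ via the coarse space morphism $p$ from diagram \eqref{eqn:stackdiagram}. The key point is that $p^{*} : \Pic(\bM_{X}(r, \cO, \ba)) \to \Pic(\cM_{X}(r, \cO, \ba))$ is injective (since $p$ is a good moduli space morphism, so $p_{*}\cO = \cO$), and its image consists precisely of those line bundles on the stack that descend to the coarse space. A line bundle descends if and only if, at every point with nontrivial stabilizer, the stabilizer acts trivially on the fiber. For a \emph{general} weight $\ba$, the only automorphisms of an $\ba$-stable parabolic bundle are the scalars $\CC^{*}$ (the center of $\GL_{r}$ intersected with $\SL_{r}$, i.e.\ the $r$-th roots of unity $\mu_{r}$ acting through the $\SL_{r}$-structure). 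So the obstruction to descent is governed by a single character of $\mu_{r}$.

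First I would set up the generators: by Theorem \ref{thm:Picstack}, $\Pic(\cM_{X}(r, \cO))$ is freely generated by $\cL$ and the $F_{i, \omega_{j}}$, and since by Proposition \ref{prop:codimestimation} (and Remark \ref{rmk:codimofsslocus}) the complement of $\cM_{X}(r, \cO, \ba)$ in $\cM_{X}(r, \cO)$ has codimension at least two, restriction gives an isomorphism $\Pic(\cM_{X}(r, \cO)) \cong \Pic(\cM_{X}(r, \cO, \ba))$. So the same line bundles freely generate the Picard group of the stable stack. Next I would compute, for each generator, the weight with which the generic stabilizer $\mu_{r}$ acts on the fiber over a stable point. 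The scalar $\zeta \in \mu_{r}$ acts on a fiber of the determinant-of-cohomology bundle $\cL$ and on the fibers of the flag line bundles $F_{i, \omega_{j}}$ by explicit powers of $\zeta$; these powers are exactly what appear in the numerical condition $r \mid \sum_{i}\sum_{j}\lambda_{j}^{i}$ of Theorem \ref{thm:Paulycoarse}(1). Concretely, a line bundle $\cL^{\ell}\otimes\bigotimes_{i}F_{i,\lambda^{i}}$ descends precisely when $\sum_{i}\sum_{j=1}^{r-1}\lambda_{j}^{i}\equiv 0 \pmod r$, which is the kernel of a single surjective homomorphism $\Pic(\cM_{X}(r, \cO, \ba)) \to \ZZ/r\ZZ$.

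Given this, the image of $p^{*}$ is the kernel of that surjective character, hence an index $r$ sublattice, which is the claim. The computation I would carry out explicitly is the $\mu_{r}$-weight on each $F_{i,\omega_{j}}$ and on $\cL$; for the flag bundles this follows from the Borel--Weil description, since $\zeta$ acts on $\Fl(\CC^{r})$ through $\PGL_{r}$ trivially but acts on the fiber $F_{i,\omega_{j}}$ by the central character, giving weight $j$ (mod $r$) up to a uniform normalization, while the scalar acts on $\cL$ with a weight divisible by $r$ (as $\cL$ is pulled back from the $\SL_{r}$/non-parabolic determinant data). I would double-check the normalization using Theorem \ref{thm:Paulycoarse}: part (1) already tells us exactly which combinations descend and become ample on $\bM_{X}(r, \cO, \ba)$, so the descent character must cut out precisely that index $r$ sublattice, providing an independent confirmation.

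The main obstacle I anticipate is pinning down the exact central weights and verifying that the induced character $\Pic(\cM_{X}(r, \cO, \ba)) \to \ZZ/r\ZZ$ is genuinely \emph{surjective} (so the index is exactly $r$ and not a proper divisor). Surjectivity should follow by exhibiting a single generator, say $F_{i, \omega_{1}}$ for one $i$, whose image generates $\ZZ/r\ZZ$; but one must confirm that the generic stabilizer is all of $\mu_{r}$ rather than a proper subgroup, which uses the generality of $\ba$ together with the identification $\cM_{X}(\SL_{r}) \cong \cM_{X}(r, \cO)$ so that the relevant gerbe is a $\mu_{r}$-gerbe. I would also need to ensure that the descent criterion applies uniformly, i.e.\ that $\cM_{X}(r, \cO, \ba)$ is generically a $\mu_{r}$-gerbe over its coarse space (which holds for general $\ba$ since every semistable object is stable and has automorphism group exactly $\mu_{r}$), so that the coarse space captures all line bundles with trivial central weight. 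This is the crux; once the character is shown to be onto $\ZZ/r\ZZ$, the index computation is immediate.
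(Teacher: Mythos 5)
Your proposal is correct and takes essentially the same route as the paper: injectivity of $p^{*}$ via $p_{*}\cO \cong \cO$ plus the projection formula, and identification of $\mathrm{im}\, p^{*}$ with the kernel of the central character of the generic stabilizer $\mu_{r}$, i.e.\ the condition $r \mid \sum_{i}\sum_{j}\lambda_{j}^{i}$ --- where the paper simply cites Pauly's weight computation (\cite[Theorem 3.3]{Pau96}) together with \cite[Lemma 2]{KV04}, you carry out that weight computation directly ($\zeta \in \mu_{r}$ acts on $F_{i,\omega_{j}}$ with weight $j$ and on $\cL$ with weight $-\chi(E) = r(g-1) \equiv 0 \pmod r$), which is precisely the content of the cited proof, as the paper's own remark after Proposition \ref{prop:Picardgroupforcoarsemoduli} notes. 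One caveat: your appeal to Proposition \ref{prop:codimestimation} to get $\Pic(\cM_{X}(r,\cO)) \cong \Pic(\cM_{X}(r,\cO,\ba))$ is only justified for \emph{dominant} weights (every general weight when $g \ge 2$, but for $g = 1$ only weights near $\ba_{c}$ with $n > r$), while the proposition is stated for an arbitrary general $\ba$; fortunately this step is inessential, since your descent-character argument needs only that every $\ba$-stable point has stabilizer exactly $\mu_{r}$ (stable implies simple, and compatibility with the trivialization of $\det E$ cuts $\CC^{*}$ down to $\mu_{r}$) and that the restriction of $F_{i,\omega_{1}}$ has weight $1$, so the character is surjective onto $\ZZ/r\ZZ$ and $\mathrm{im}\,p^{*}$ is its kernel, of index exactly $r$.
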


\begin{proof}
Let $p : \cM_{X}(r, \cO, \ba) \to \bM_{X}(r, \cO, \ba)$ be the structure morphism. Note that $p_{*}\cO_{\cM_{X}(r, \cO, \ba)}\\ \cong \cO_{\bM_{X}(r, \cO, \ba)}$, since $\bM_{X}(r, \cO, \ba)$ is the coarse moduli space (\cite[Theorem 11.1.2]{Ols16}). So if $p^{*}L \cong \cO_{\cM_{X}(r, \cO, \ba)}$, then $L \cong p_{*}p^{*}L \cong \cO_{\bM_{X}(r, \cO, \ba)}$ by the projection formula. Thus $p^{*} : \Pic(\bM_{X}(r, \cO, \ba)) \to \Pic(\cM_{X}(r, \cO, \ba))$ is injective. Indeed, $\Pic(\bM_{X}(r, \cO, \ba))$ is a finite index subgroup of $\Pic(\cM_{X}(r, \cO, \ba))$ (\cite[Lemma 2]{KV04}). Furthermore, in the proof of \cite[Theorem 3.3]{Pau96}, Pauly shows that the line bundle $\cL^{\ell}\otimes \bigotimes_{i=1}^{n}F_{i, \lambda^{i}}$ descends to $\bM_{X}(r, \cO, \ba)$ if and only if $r|\sum_{i=1}^{n}\sum_{j=1}^{r-1}\lambda_{j}^{i}$. Thus $\mathrm{im}\; p^{*}$ is an index $r$ sublattice of $\Pic(\cM_{X}(r, \cO, \ba))$.
\end{proof}

\begin{proposition}\label{prop:Picardgroupforcoarsemoduli}
Let $\ba$ be a dominant parabolic weight. Then $\Pic(\cM_{X}(r, \cO, \ba)) \cong \Pic(\cM_{X}(r, \cO))\\ \cong \ZZ^{n(r-1)+1}$.
\end{proposition}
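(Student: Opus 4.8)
The plan is to realize $\cM_X(r, \cO, \ba)$ as a codimension-two open substack of $\cM_X(r, \cO)$ and to invoke purity for the Picard group of smooth stacks. Fix $m \gg 0$ so that $\cM_X(r, \cO, \ba) \cong [\bS(m)^{s}(\ba)/\GL_{\nu(m)}]$ and, by dominance of $\ba$ (Definition \ref{def:dominantweight}), so that $\codim(\bS(m)^{us}(\ba), \bS(m)) \ge 2$. Since $\GL_{\nu(m)}$ is smooth, the $\GL_{\nu(m)}$-torsor $\bS(m) \to [\bS(m)/\GL_{\nu(m)}]$ shows that $\bS(m)$ is itself a smooth quasi-projective variety (as $[\bS(m)/\GL_{\nu(m)}]$ is an open substack of the smooth stack $\cM_X(r, \cO)$), and the equivariant codimension of the unstable locus in $[\bS(m)/\GL_{\nu(m)}]$ is again at least two.

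The key step is the restriction isomorphism
\[
	\Pic([\bS(m)/\GL_{\nu(m)}]) \xrightarrow{\ \sim\ }
	\Pic([\bS(m)^{s}(\ba)/\GL_{\nu(m)}]) = \Pic(\cM_X(r, \cO, \ba)).
\]
On a smooth algebraic stack the Picard group is identified with the group of divisor classes; equivalently, $\Pic([\bS(m)/\GL_{\nu(m)}]) = \Pic^{\GL_{\nu(m)}}(\bS(m))$ is computed by $\GL_{\nu(m)}$-equivariant Weil divisors on the smooth variety $\bS(m)$, and deleting an invariant closed subset of codimension $\ge 2$ neither destroys nor creates divisor classes. For surjectivity I would extend an equivariant Weil divisor on $\bS(m)^{s}(\ba)$ to its closure in $\bS(m)$, which is Cartier by smoothness and furnishes the required preimage; for injectivity, a line bundle restricting to the trivial one has associated divisor supported in codimension $\ge 2$, hence is trivial. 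This is the equivariant analogue of the classical purity statement and can be argued by descent to a smooth atlas, in the spirit of \cite[Corollary 3.4]{Hei10}.

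I would then pass to the limit. By Corollary \ref{cor:picardgplimit}, $\Pic(\cM_X(r, \cO)) \cong \varprojlim_m \Pic([\bS(m)/\GL_{\nu(m)}])$, with transition maps given by restriction along the open embeddings $[\bS(m)/\GL_{\nu(m)}] \hookrightarrow [\bS(m+1)/\GL_{\nu(m+1)}]$. Since $\cM_X(r, \cO, \ba)$ sits inside each $[\bS(m)/\GL_{\nu(m)}]$ for $m \gg 0$, the restriction isomorphisms above are compatible with these transition maps, so the inverse system stabilizes and every transition map is an isomorphism for large $m$. Hence
\[
	\Pic(\cM_X(r, \cO)) \cong \varprojlim_m \Pic([\bS(m)/\GL_{\nu(m)}]) \cong \Pic(\cM_X(r, \cO, \ba)),
\]
and the composite is precisely the restriction $\iota^{*}$ along $\iota : \cM_X(r, \cO, \ba) \hookrightarrow \cM_X(r, \cO)$. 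Combining with Theorem \ref{thm:Picstack}, which gives $\Pic(\cM_X(r, \cO)) \cong \ZZ^{n(r-1)+1}$, yields both isomorphisms.

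The step I expect to be the main obstacle is the purity isomorphism for the Artin quotient stacks $[\bS(m)/\GL_{\nu(m)}]$: one must confirm that removing the codimension-$\ge 2$ unstable locus leaves the equivariant Picard group unchanged, which requires that $\Pic$ of these smooth—but non-separated and only locally finite type—stacks genuinely behaves like a divisor class group. I would settle this through the equivariant-divisor description of $\Pic^{\GL_{\nu(m)}}(\bS(m))$ on the smooth variety $\bS(m)$, combined with the surjectivity of pullback for smooth stacks recorded in \cite[Corollary 3.4]{Hei10}.
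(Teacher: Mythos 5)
Your proposal is correct and follows essentially the same route as the paper: restrict to $[\bS(m)/\GL_{\nu(m)}]$, use dominance of $\ba$ to get codimension $\ge 2$ for the unstable locus, deduce that $j^{*}:\Pic([\bS(m)/\GL_{\nu(m)}])\to\Pic(\cM_{X}(r,\cO,\ba))$ is an isomorphism, and then stabilize the inverse system of Corollary \ref{cor:picardgplimit} and invoke Theorem \ref{thm:Picstack}. The only difference is that where you prove the equivariant purity step by hand via equivariant Weil divisors on the smooth variety $\bS(m)$, the paper simply cites \cite[Proposition 18]{EG98}, which packages exactly that argument.
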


\begin{proof}
Assume that $m \gg 0$. Then the diagram in \eqref{eqn:stackdiagram} extends to
\[
	\xymatrix{\cM_{X}(r, \cO, \ba) =
	\left[\bS(m)^{s}(\ba)/\GL_{\nu(m)}\right]\ar@^{(->}[r]^{\quad\quad\quad\quad j}
	\ar[d]^{p}&
	\left[\bS(m)/\GL_{\nu(m)}\right] \ar@^{(->}[r]^{\quad\iota_{m}}&
	\cM_{X}(r, \cO)\\
	\bM_{X}(r, \cO, \ba).}
\]
Proposition \ref{prop:codimestimation} and \cite[Proposition 18]{EG98} imply that $j^{*}:\Pic(\left[\bS(m)/\GL_{\nu(m)}\right]) \to \Pic(\cM_{X}(r, \cO, \ba))$ is an isomorphism. In particular, the natural inclusion $[\bS(m)/\GL_{\nu(m)}] \subset [\bS(m+1)/\GL_{\nu(m+1)}]$ induces an isomorphism $\Pic(\left[\bS(m)/\GL_{\nu(m)}\right]) \cong \Pic(\left[\bS(m+1)/\GL_{\nu(m+1)}\right])$. Thus
\[
	\varprojlim_{\ell}\Pic(\left[\bS(\ell)/\GL_{\nu(\ell)}\right]) \cong \Pic(\left[\bS(m)/\GL_{\nu(m)}\right])
\]
for some $m \gg 0$. By Corollary \ref{cor:picardgplimit}, we obtain an isomorphism $\Pic(\cM_{X}(r, \cO, \ba)) \cong \Pic ([\bS(m)/\GL_{\nu(m)}])\\ \cong \Pic(\cM_{X}(r, \cO))$.

\end{proof}

\begin{remark}
In \cite{Pau96} Pauly proves that a single explicit line bundle constructed from $\ell$ and $\lambda^{i}$ descends to $\bM_{X}(r, \cO, \ba)$. Because his proof relies on a weight computation, the same proof is valid for any line bundle satisfying the divisibility condition.
\end{remark}

\subsection{Comparison of Cox rings}\label{ssec:coxring}

\begin{proposition}\label{prop:coxringidentification}
Let $\ba$ be a dominant parabolic weight. Then $\Cox(\cM_{X}(r, \cO)) \cong \Cox(\bM_{X}(r, \cO, \ba))$.
\end{proposition}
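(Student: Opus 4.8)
The plan is to compare the two Cox rings degree by degree, using the codimension estimate to transfer global sections between the stack $\cM_{X}(r, \cO)$ and the coarse moduli space $\bM_{X}(r, \cO, \ba)$. The essential difficulty is that $\Pic(\bM_{X}(r, \cO, \ba))$ is only an index $r$ sublattice of $\Pic(\cM_{X}(r, \cO, \ba)) \cong \Pic(\cM_{X}(r, \cO))$ (Propositions \ref{prop:Picardgroupstackandcoarsemoduli} and \ref{prop:Picardgroupforcoarsemoduli}), so $\Cox(\bM_{X}(r, \cO, \ba))$ is a priori a subalgebra summing over fewer line bundles. I would show that the apparently missing graded pieces carry no sections, so that nothing is lost.

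First I would fix a line bundle $L \in \Pic(\cM_{X}(r, \cO)) \cong \Pic(\cM_{X}(r, \cO, \ba))$ and compare $\rH^{0}(\cM_{X}(r, \cO), L)$ with the corresponding space of sections over the stable locus. Since $\ba$ is dominant, Proposition \ref{prop:codimestimation} gives $\codim \bS(m)^{us}(\ba) \ge 2$ in $\bS(m)$ for $m \gg 0$. As $\bS(m)$ is smooth (hence normal) and the open embeddings $[\bS(m)/\GL_{\nu(m)}] \hookrightarrow [\bS(m+1)/\GL_{\nu(m+1)}]$ exhaust $\cM_{X}(r, \cO)$, Hartogs' extension across a codimension $\ge 2$ locus yields
\[
	\rH^{0}(\cM_{X}(r, \cO), L) \cong \rH^{0}(\cM_{X}(r, \cO, \ba), \iota^{*}L),
\]
where $\iota$ is the inclusion from diagram \eqref{eqn:stackdiagram}; here I would invoke Corollary \ref{cor:globalsectionlimit} together with the fact that restriction to the stable locus $[\bS(m)^{s}(\ba)/\GL_{\nu(m)}]$ is an isomorphism on sections once the unstable complement has codimension at least two.

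Next I would pass from the stack $\cM_{X}(r, \cO, \ba)$ to its coarse moduli space $\bM_{X}(r, \cO, \ba)$ via the structure morphism $p$. Because $p_{*}\cO_{\cM_{X}(r, \cO, \ba)} \cong \cO_{\bM_{X}(r, \cO, \ba)}$ (as used in Proposition \ref{prop:Picardgroupstackandcoarsemoduli}), the projection formula gives, for any line bundle $M$ on $\bM_{X}(r, \cO, \ba)$,
\[
	\rH^{0}(\bM_{X}(r, \cO, \ba), M) \cong \rH^{0}(\cM_{X}(r, \cO, \ba), p^{*}M).
\]
Summing over $M \in \Pic(\bM_{X}(r, \cO, \ba))$ and using that $p^{*}$ identifies $\Pic(\bM_{X}(r, \cO, \ba))$ with the index $r$ sublattice of descending line bundles, I would obtain an injection $\Cox(\bM_{X}(r, \cO, \ba)) \hookrightarrow \Cox(\cM_{X}(r, \cO))$ whose image is the sum of the graded pieces indexed by the sublattice.

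The crux, and the step I expect to be the main obstacle, is showing this inclusion is in fact an equality, i.e. that the graded pieces $\rH^{0}(\cM_{X}(r, \cO), L)$ for $L$ \emph{outside} the descent sublattice all vanish. By Theorem \ref{thm:Paulystack} such a piece is a space of conformal blocks $\VV_{X, \ell, \vec{\lambda}}^{\dagger}$ with $r \nmid \sum_{i}\sum_{j}\lambda_{j}^{i}$; I would argue that the nontrivial action of the center $\mu_{r} \subset \SL_{r}$ (equivalently, of the $\CC^{*}$ gerbe/torsor structure recorded in $\bS(m)$) acts on such a section by a nontrivial character, forcing the section to be zero. Concretely, the nonvanishing of $\rH^{0}$ would contradict $p_{*}$ descent combined with the divisibility criterion $r \mid \sum_{i}\sum_{j}\lambda_{j}^{i}$ that Pauly's weight computation shows is \emph{necessary} for a section to exist (not merely for the line bundle to descend). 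Assembling the degreewise isomorphisms into an isomorphism of graded $\CC$-algebras is then routine, since all the identifications above are compatible with the multiplication maps in \eqref{eqn:tensormap}, giving $\Cox(\cM_{X}(r, \cO)) \cong \Cox(\bM_{X}(r, \cO, \ba))$.
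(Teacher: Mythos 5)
Your proposal is correct, and its skeleton --- transfer sections across the unstable locus using the codimension bound for a dominant weight (Proposition \ref{prop:codimestimation} plus Corollary \ref{cor:globalsectionlimit}), descend to the coarse space by the projection formula, then show the graded pieces outside the descent sublattice vanish --- is exactly the paper's Steps 1, 2, and 4. The genuine difference is in the crux step, the vanishing of $\rH^{0}(\cM_{X}(r,\cO),L)$ for $L\notin\mathrm{im}\;p^{*}$. The paper proves this by degenerating $X$ inside $\overline{\cM}_{g,n}$ to an irreducible nodal curve whose normalization is rational: local freeness of $\VV_{\ell,\vec{\lambda}}^{\dagger}$ over $\overline{\cM}_{g,n}$ preserves the rank under degeneration, factorization (Theorem \ref{thm:factorization}) expresses that rank as a sum of genus-zero ranks, and in genus zero each summand embeds into a space of $\SL_{r}$-invariants of a tensor product of irreducibles on which the center acts by a nontrivial character, since the total size of the partitions is still not divisible by $r$. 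Your route stays on $X$ itself: the central $\mu_{r}\subset\SL_{r}$ lies in the automorphism group of every point of $\cM_{X}(r,\cO)$, it acts on the fiber of $\cL^{\ell}\otimes\bigotimes_{i}F_{i,\lambda^{i}}$ by $\zeta\mapsto\zeta^{|\vec{\lambda}|}$ --- trivially on $\cL$ because $\chi(E)=r(1-g)$ is divisible by $r$, and by $\zeta^{|\lambda^{i}|}$ on each $F_{i,\lambda^{i}}$ --- and a section of a line bundle on a stack is fiberwise invariant under automorphisms, hence vanishes identically when $r\nmid|\vec{\lambda}|$. This is sound and arguably more self-contained: it needs no degeneration, no flatness of the conformal-blocks sheaf, and no factorization, only the same weight computation of Pauly already invoked in Proposition \ref{prop:Picardgroupstackandcoarsemoduli}; the paper's route, in exchange, runs entirely through conformal-block functoriality that it must develop anyway for the singular-curve case in Section \ref{sec:singularcurve}.

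One caution: as written, your final appeal to Pauly (``the divisibility criterion is \emph{necessary} for a section to exist'') asserts the conclusion rather than proving it, since Pauly's published statement is a descent criterion for the line bundle, not a vanishing statement for sections. To close the argument you must actually carry out the fiberwise character computation sketched above --- in particular the triviality of the $\mu_{r}$-character on $\cL$, which is precisely where the degree-zero (trivial determinant) hypothesis enters; once that computation is written down, both the descent criterion and the vanishing follow from it simultaneously.
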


\begin{proof}
\textsf{Step 1.}
By Proposition \ref{prop:Picardgroupforcoarsemoduli}, we may identify $\Pic(\cM_{X}(r, \cO))$ with $\Pic(\cM_{X}(r, \cO, \ba))$. For any $L \in \Pic(\cM_{X}(r, \cO))$, by Corollary \ref{cor:globalsectionlimit} and the codimension estimation,
\[
	\rH^{0}(\cM_{X}(r, \cO, \ba), L)
	= \rH^{0}([\bS(m)/\GL_{\nu(m)}], L)
	= \varprojlim_{\ell}\rH^{0}([\bS(\ell)/\GL_{\nu(\ell)}], L)
	= \rH^{0}(\cM_{X}(r, \cO), L)
\]
for some $m \gg 0$. Thus we have $\Cox(\cM_{X}(r, \cO)) \cong \Cox(\cM_{X}(r, \cO, \ba))$.

\textsf{Step 2.}
Suppose that $L \in \mathrm{im}\; p^{*} \subset \Pic(\cM_{X}(r, \cO, \ba))$, so $L = p^{*}L'$. Then by the projection formula, $\rH^{0}(\cM_{X}(r, \cO, \ba), p^{*}L') = 	\rH^{0}(\bM_{X}(r, \cO, \ba), p_{*}p^{*}L') = \rH^{0}(\bM_{X}(r, \cO, \ba), L')$.

\textsf{Step 3.}
We show that if $L \notin \mathrm{im} \; p^{*}$, then $\rH^{0}(\cM_{X}(r, \cO), L) = 0$. Note that $\rH^{0}(\cM_{X}(r, \cO), L) = \VV^{\dagger}_{X, \ell, \vec{\lambda}}$ for some $\ell$ and $\vec{\lambda}$. When $L \notin \mathrm{im}\;p^{*}$, $|\vec{\lambda}| := \sum_{i=1}^{n}|\lambda^{i}| = \sum_{i=1}^{n}\sum_{j=1}^{r-1}\lambda_{j}^{i}$ is not a multiple of $r$.

Take a degeneration $X' = (C', \bp')$ of $X = (C, \bp)$ on $\overline{\cM}_{g, n}$ such that $C'$ is irreducible and has $g$ double points so its normalization $\widetilde{C}$ is a rational curve. Let $\widetilde{X} = (\widetilde{C}, \bq) \in \overline{\cM}_{0, n+2g}$ where $\bq$ consists of the inverse image of singular points and marked points $\bp'$ via the normalization map. Since conformal blocks form a vector bundle over $\overline{\cM}_{g, n}$, $\rk \VV^{\dagger}_{X, \ell, \vec{\lambda}} = \rk \VV^{\dagger}_{X', \ell, \vec{\lambda}}$. By the factorization (Theorem \ref{thm:factorization}),
\[
	\rk \VV^{\dagger}_{X', \ell, \vec{\lambda}}
	= \sum_{\vec{\mu}} \rk \VV^{\dagger}_{\widetilde{X}, \ell, \vec{\lambda} \cup \vec{\mu} \cup \vec{\mu^{*}}},
\]
where the sum is taken over all $g$-sequences $\vec{\mu} = (\mu^{1}, \mu^{2}, \cdots, \mu^{g})$ of dominant integral weights with $(\mu^{k}, \theta) \le \ell$. Here $\vec{\mu^{*}} = (\mu^{1 *}, \mu^{2 *}, \cdots, \mu^{g *})$.

Since $|\mu^{k}| + |\mu^{k *}| = r\ell$, $|\vec{\lambda}| + |\vec{\mu}| + |\vec{\mu^{*}}|$ is not a multiple of $r$ either. But for $g = 0$, $\VV^{\dagger}_{\widetilde{X}, \ell, \vec{\lambda} \cup \vec{\mu} \cup \vec{\mu^{*}}}$ is a subspace of the space of invariants $\left(\bigotimes_{i} V_{\lambda^{i}} \otimes \left(\bigotimes_{k} V_{\mu^{k}} \otimes V_{\mu^{k *}}\right)\right)^{\SL_{r}}$, which is trivial.

\textsf{Step 4.}
In summary,
\[
\begin{split}
	\Cox(\cM_{X}(r, \cO)) &= \Cox(\cM_{X}(r, \cO, \ba)) =
	\bigoplus_{L \in \Pic(\cM_{X}(r, \cO, \ba))}
	\rH^{0}(\cM_{X}(r, \cO, \ba), L)\\
	&= \bigoplus_{L \in \Pic(\bM_{X}(r, \cO, \ba))}
	\rH^{0}(\cM_{X}(r, \cO, \ba), p^{*}L) \\
	&=
	\bigoplus_{L \in \Pic(\bM_{X}(r, \cO, \ba))}
	\rH^{0}(\bM_{X}(r, \cO, \ba), L) = \Cox(\bM_{X}(r, \cO, \ba)).
\end{split}
\]
\end{proof}

\subsection{Canonical bundle}\label{ssec:canonicalbundle}

Fix $X = (C, \bp) \in \cM_{g, n}$. Let $q:\bR(m)^{ss}(\ba)\to \bM_{X}(r,\cO,\ba)$ be the quotient map. Let $(\cE,\{\cW_{\bullet}^{i}\})$ be the universal family over $\bR(m)^{ss}(\ba)\times C$. Let $\cQ_{p^i,j} :=\cE|_{\bR(m)^{ss}\times\{p^i\}}/\cW_{j}^{i}$ and let $\pi: \bR(m)^{ss}(\ba) \times C \to \bR(m)^{ss}(\ba)$ be the projection.

\begin{proposition}[(\protect{\cite[Proposition 2.2 and (5.9)]{Sun00}})]
Fix $y \in C$. For any parabolic weight $\ba$,
\[
	\omega_{\bR(m)^{ss}(\ba)}^{-1} \cong
	(\det R\pi_{*}\cE)^{2r}\otimes
	\left(\bigotimes_{i=1}^{n}\bigotimes_{j=1}^{r-1}
	(\det\cQ_{p^i,j})^{2}\right)
	\otimes(\det\cE|_{\bR(m)^{ss}(\ba)\times\{y\}})^{2r(1-g)-n(r-1)}.
\]
\end{proposition}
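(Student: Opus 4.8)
The plan is to compute the canonical bundle of the smooth quasi-projective variety $\bR(m)^{ss}(\ba)$ by identifying it as an open subset of the product (or a fiber bundle over) the Quot-type construction in Section \ref{sec:codimension}, and then applying Grothendieck–Riemann–Roch (GRR) together with standard computations of tangent bundles. Recall from the construction that $\bR(m)$ is built as a flag bundle $\pi_m: \bR(m)\to\Omega(m)$ over a locally closed subscheme $\Omega(m)$ of a Quot scheme, and that $\Omega(m)$ carries a universal quotient $\cO(-m)^{\nu(m)}\to\cF\to 0$. The semistable locus $\bR(m)^{ss}(\ba)$ is smooth and open, so $\omega_{\bR(m)^{ss}(\ba)}=\det\Omega^1_{\bR(m)^{ss}(\ba)}$, and I would compute it by splitting the tangent bundle into the vertical (flag-bundle) directions and the horizontal (Quot-scheme) directions via the relative tangent sequence of $\pi_m$.

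First I would handle the Quot-scheme part. The tangent space to $\Omega(m)$ at a point $[\cO(-m)^{\nu(m)}\to E\to 0]$ with kernel $K$ is $\Hom(K,E)=\rH^0(C,K^\vee\otimes E)$, and because we are cutting out the trivial-determinant condition $\wedge^r F\cong\cO$, the relevant deformations are the trace-free ones. The standard move (this is exactly \cite[Proposition 2.2]{Sun00}) is to write the tangent bundle of the Quot scheme in terms of $R^\bullet\pi_*\cEnd(\cE)$ and then to reorganize the determinant of this relative cohomology using GRR. The determinant-line-bundle factor $(\det R\pi_*\cE)^{2r}$ in the statement is precisely what emerges: the $2r$ accounts for the rank-$r$ endomorphism bundle contributing through trace pairings, and the determinant of cohomology of $\cE$ appears after applying GRR to $\cE$ and collecting the codimension-two and higher Chern-class contributions (these vanish on a curve). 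I would extract the line-bundle part of $\det R\pi_*\cEnd(\cE)\cong\det R\pi_*(\cE^\vee\otimes\cE)$ and simplify using $\cE^\vee\cong\cE\otimes(\det\cE)^{-1}$ together with $\det\cE\cong\cO$.

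Next I would handle the flag-bundle part. The relative tangent bundle of a full flag bundle $\Fl(\cF|_{p^i})\to\Omega(m)$ is the direct sum of the positive-root line bundles, whose determinant is expressible in terms of the successive quotients $\cQ_{p^i,j}=\cE|_{p^i}/\cW^i_j$ and the subquotient line bundles $\cW^i_j/\cW^i_{j-1}$. A direct computation of $\det T_{\Fl/\Omega}$ yields, after telescoping, exactly the factor $\bigotimes_{i,j}(\det\cQ_{p^i,j})^{2}$ of the statement together with a twist by powers of $\det\cE|_{p^i}$, which is where the correction term $(\det\cE|_{\bR(m)^{ss}(\ba)\times\{y\}})^{2r(1-g)-n(r-1)}$ originates. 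I would check that the exponent $2r(1-g)-n(r-1)$ is forced: the $2r(1-g)$ piece comes from the Euler-characteristic/normalization constant in the GRR computation for $\cE$ (recall $\chi(E)=r(1-g)$ for degree zero), while the $-n(r-1)$ piece bookkeeps the $n$ flag evaluations, each contributing $(r-1)$ times a $\det\cE|_{p^i}$ factor that must be converted to the fixed fiber over $y$ using that $\det\cE$ is trivial and hence fiberwise independent of the chosen point.

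The main obstacle I expect is the careful bookkeeping of the normalization factor, i.e.\ pinning down the precise exponent $2r(1-g)-n(r-1)$ on $\det\cE|_{\bR(m)^{ss}(\ba)\times\{y\}}$. This is where one must be meticulous about the choice of linearization in the GIT quotient, the difference between $\det R\pi_*\cE$ and $\det\cE|_{\{y\}}$, and the contribution of each of the $n$ parabolic points; the determinant and flag contributions individually carry $\det\cE|_{p^i}$ twists that only cancel down to the stated exponent after using $\det\cE\cong\cO$ to replace every $p^i$-fiber and every point-fiber by the common fiber over $y$. Since this is precisely the content of \cite[Proposition 2.2 and (5.9)]{Sun00}, I would verify that Sun's computation, carried out in the $n=0$ case and then with parabolic structure, goes through verbatim for arbitrary parabolic weight $\ba$ (the weight $\ba$ affects only the stability condition defining the open locus, not the ambient tangent-bundle computation), and adapt the indexing of the flag quotients to the full-flag convention used in this paper.
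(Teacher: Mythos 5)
Your proposal is correct in substance, but it takes a genuinely different route from the paper. The paper does not reprove this formula at all: it quotes \cite[Proposition 2.2 and (5.9)]{Sun00}, which Sun establishes for the larger parameter space $\widetilde{\bR}(m)^{ss}(\ba)$ with unconstrained degree-zero determinant, and the paper's only original content is the remark that $\bR(m)^{ss}(\ba)$ is the fiber over $[\cO]$ of the smooth determinant fibration $\det : \widetilde{\bR}(m)^{ss}(\ba) \to \Pic^{0}(C)$, so that $\omega_{\widetilde{\bR}(m)^{ss}(\ba)} \cong \omega_{\widetilde{\bR}(m)^{ss}(\ba)/\Pic^{0}(C)}$ (the base is an abelian variety with trivial canonical bundle) and the latter restricts to $\omega_{\bR(m)^{ss}(\ba)}$. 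You instead re-derive the computation from scratch on the fixed-determinant locus: the relative tangent sequence of the flag bundle over the Quot scheme, GRR for $\det R\pi_{*}\cEnd(\cE)$, and the telescoping identity $\det T_{\Fl(V)} \cong \bigotimes_{j}(\det Q_{j})^{2}\otimes(\det V)^{-(r-1)}$, which produces exactly $\bigotimes_{i,j}(\det \cQ_{p^{i},j})^{2}$ twisted by $(\det\cE|_{p^{i}})^{-(r-1)}$ at each parabolic point; since $\wedge^{r}\cF \cong \pi^{*}(\det\cE|_{\bR(m)^{ss}(\ba)\times\{y\}})$ on the locus where the fiberwise determinant is trivial, every $\det\cE|_{p^{i}}$ can indeed be traded for the fiber at $y$, and your closing observation that $\ba$ only selects the open subset and never enters the tangent-bundle computation is precisely why the statement holds for every weight. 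Three points to nail down if you execute this: (i) the Quot tangent sequence also yields a factor $(\det \pi_{*}\cF(m))^{\nu(m)}$, which is trivial on $\Omega(m)$ exactly because the defining condition $\rH^{0}(C,\cO^{\nu(m)}) \cong \rH^{0}(C, F(m))$ makes $\cO^{\nu(m)} \to \pi_{*}\cF(m)$ an isomorphism — this must be recorded, as it is absent from your sketch; (ii) the exponent $2r(1-g)$ arises from the Todd-class correction term $\pi_{*}(c_{1}(\cE)\cdot \mathrm{td}_{1})$ when comparing $\det R\pi_{*}\cEnd(\cE)$ with $(\det R\pi_{*}\cE)^{2r}$, using $c_{1}(\cE) = \pi^{*}c_{1}(\det\cE|_{\times\{y\}})$, rather than literally from $\chi(E) = r(1-g)$, and you must adopt the paper's convention $\det R\pi_{*}\cE = \det R^{1}\pi_{*}\cE \otimes (\det \pi_{*}\cE)^{-1} = \cL$ to get the signs of the exponents right; (iii) the trace-free correction imposed by the fixed-determinant condition changes the determinant line only by the constant $\det \rH^{\bullet}(C, \cO_{C})$, so it is harmless — this is the same fact the paper's remark exploits geometrically via $\omega_{\Pic^{0}(C)} \cong \cO$. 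What each approach buys: the paper's is a two-line reduction to the literature, at the cost of relying on Sun's ambient computation; yours is self-contained, works directly on the fixed-determinant parameter space, and makes the independence of the formula from $\ba$ transparent.
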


\begin{remark}
Sun proves the statement for $\widetilde{\bR}(m)^{ss}(\ba)$ which is the parameter space such that $\widetilde{\bR}(m)^{ss}(\ba)/\PGL_{\nu(m)} \cong \bM_{X}(r, 0, \ba)$, instead of $\bM_{X}(r, \cO, \ba)$. However, the same formula also works for $\bR(m)^{ss}(\ba)$. Indeed, there is a determinant map
\begin{eqnarray*}
	\det : \widetilde{\bR}(m)^{ss}(\ba) & \to & \Pic^{0}(C)\\
	(\cO(-m)^{\nu(m)} \to E \to 0, \{W_{\bullet}^{i}\}) & \mapsto &
	\det E
\end{eqnarray*}
and $\bR(m)^{ss}(\ba)$ is the fiber of $[\cO] \in \Pic^{0}(C)$. Since $\omega_{\Pic^{0}(E)} \cong \cO$, $\omega_{\widetilde{\bR}(m)^{ss}(\ba)} \cong \omega_{\widetilde{\bR}(m)^{ss}(\ba)/\Pic^{0}(C)}$. Since $\det$ is a smooth fiber bundle, $\omega_{\bR(m)^{ss}(\ba)}$ is the restriction of $\omega_{\widetilde{\bR}(m)^{ss}(\ba)/\Pic^{0}(C)}$.
\end{remark}

\begin{proposition}\label{prop:canonicaldivisor}
Suppose that $\ba$ is a dominant weight. Then $\omega_{\bM_{X}(r,\cO, \ba)}^{-1}$ is the descent of the line bundle $\cL^{2r} \otimes \otimes_{i=1}^{n} F_{i, \lambda}$ on $\cM_{X}(r, \cO, \ba)$, where $\lambda = 2\sum_{j=1}^{r-1}\omega_{j}$. In particular,
\[
	\rH^{0}(\bM_{X}(r, \cO, \ba), \omega_{\bM_{X}(r, \cO, \ba)}^{-1})
	= \VV_{X, 2r, (\lambda, \lambda, \cdots, \lambda)}^{\dagger}.
\]
\end{proposition}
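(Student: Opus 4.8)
The plan is to read off $\omega_{\bM_{X}(r,\cO,\ba)}^{-1}$ from the formula for $\omega_{\bR(m)^{ss}(\ba)}^{-1}$ in the preceding proposition, by rewriting each determinant line bundle appearing there in terms of the standard generators $\cL$ and $F_{i,\lambda^{i}}$ of $\Pic(\cM_{X}(r,\cO))$. First I would descend the formula to the coarse space. The $\GL_{\nu(m)}$-action on $\bR(m)^{ss}(\ba)$ factors through $\PGL_{\nu(m)}$, and since $\det\circ\mathrm{Ad}$ is the trivial character, the relative canonical of the quotient map $q:\bR(m)^{ss}(\ba)\to\bM_{X}(r,\cO,\ba)$ is trivial on the stable locus. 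Because $\ba$ is dominant, the complement of the stable locus has codimension at least two (Remark \ref{rmk:codimofsslocus}), so $\omega_{\bR(m)^{ss}(\ba)}^{-1}$ descends to $\omega_{\bM_{X}(r,\cO,\ba)}^{-1}$. Hence it suffices to identify the right-hand side of Sun's formula, as a descended line bundle, with $\cL^{2r}\otimes\bigotimes_{i=1}^{n}F_{i,\lambda}$.

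The translation I would carry out factor by factor, working on the determinant-trivialized parameter space $\bS(m)$, where $\wedge^{r}\cE\cong\cO$. The determinant-of-cohomology factor $\det R\pi_{*}\cE$ is, by definition, the determinant line bundle $\cL$ (up to the sign convention fixed in its definition), so $(\det R\pi_{*}\cE)^{2r}$ contributes $\cL^{2r}$. For the flag factors, the exact sequence $0\to\cW_{j}^{i}\to\cE|_{p^{i}}\to\cQ_{p^{i},j}\to 0$ gives $\det\cQ_{p^{i},j}\cong\det\cE|_{p^{i}}\otimes(\det\cW_{j}^{i})^{-1}$; by Borel--Weil this is $F_{i,\omega_{j}}$ twisted by a power of $\det\cE|_{p^{i}}$, so $\bigotimes_{j=1}^{r-1}(\det\cQ_{p^{i},j})^{2}$ equals $F_{i,\lambda}$ with $\lambda=2\sum_{j=1}^{r-1}\omega_{j}=(2(r-1),2(r-2),\cdots,2,0)$, again up to a power of $\det\cE|_{p^{i}}$. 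Finally, on $\bS(m)$ the trivialization $\wedge^{r}\cE\cong\cO$ renders $\det\cE|_{y}$ and $\det\cE|_{p^{i}}$ trivial, so the remaining factor $(\det\cE|_{y})^{2r(1-g)-n(r-1)}$ together with all the twists introduced above drop out. This would yield $\omega_{\cM_{X}(r,\cO,\ba)}^{-1}\cong\cL^{2r}\otimes\bigotimes_{i=1}^{n}F_{i,\lambda}$ on the stack.

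To finish, I would confirm the descent and then compute sections. Since $|\lambda|=\sum_{j=1}^{r-1}2(r-j)=r(r-1)$, we have $|\vec\lambda|=nr(r-1)\equiv 0\pmod r$, which is exactly the divisibility condition of Proposition \ref{prop:Picardgroupstackandcoarsemoduli} guaranteeing that $\cL^{2r}\otimes\bigotimes_{i}F_{i,\lambda}$ lies in $\mathrm{im}\,p^{*}$ and descends; together with the first paragraph this identifies the descent with $\omega_{\bM_{X}(r,\cO,\ba)}^{-1}$. For the cohomology, $p_{*}\cO_{\cM_{X}(r,\cO,\ba)}\cong\cO_{\bM_{X}(r,\cO,\ba)}$ and the projection formula give $\rH^{0}(\bM_{X}(r,\cO,\ba),\omega^{-1})\cong\rH^{0}(\cM_{X}(r,\cO,\ba),\cL^{2r}\otimes\bigotimes_{i}F_{i,\lambda})$; by the codimension estimate (as in Step 1 of the proof of Proposition \ref{prop:coxringidentification}) the latter equals $\rH^{0}(\cM_{X}(r,\cO),\cL^{2r}\otimes\bigotimes_{i}F_{i,\lambda})$, which by Theorem \ref{thm:Paulystack} is $\VV_{X,2r,(\lambda,\cdots,\lambda)}^{\dagger}$. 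Note that $(\theta,\lambda)=2(r-1)<2r$, so this weight is admissible at level $2r$.

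The step I expect to be the main obstacle is the factor-by-factor bookkeeping in the middle paragraph: one must verify that the twists by $\det\cE|_{p^{i}}$ relating $\det\cQ_{p^{i},j}$ to $F_{i,\omega_{j}}$, together with the global factor $(\det\cE|_{y})^{2r(1-g)-n(r-1)}$, combine to the trivial bundle on $\bS(m)$ and leave exactly $\cL^{2r}\otimes\bigotimes_{i}F_{i,\lambda}$ with no residual power of $\cL$ and no residual equivariant character. This is Sun's computation adapted to the parabolic, fixed-determinant setting, and keeping the normalization of $\cL$ and of the Borel--Weil identification mutually consistent is the only delicate point.
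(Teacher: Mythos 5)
Your proposal is correct and follows essentially the same route as the paper: descend Sun's anticanonical formula from the parameter space, pull back to the determinant-trivialized space $\bS(m)$ where the $\det\cE|_{\{y\}}$ (and in your bookkeeping, $\det\cE|_{\{p^{i}\}}$) factors become trivial, identify the remaining factors with $\cL^{2r}\otimes\bigotimes_{i}F_{i,\lambda}$ via $q^{*}\cL\cong\det R\pi_{*}\cE$ and the Borel--Weil identification $q^{*}F_{i,\omega_{j}}\cong\det\cQ_{p^{i},r-j}$, and compute sections by the projection formula plus the codimension estimate as in Steps 1--2 of Proposition \ref{prop:coxringidentification}. The only cosmetic differences are that the paper cites \cite{NR93} and \cite{Kn89} where you give the torsor/trivial-character argument for descent, and your explicit checks ($r\mid nr(r-1)$ and $(\theta,\lambda)=2(r-1)<2r$) are left implicit in the paper.
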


\begin{proof}
By \cite[Lemma 4.17]{NR93} and \cite[Satz 5]{Kn89}, $\omega_{\bM_{X}(r,\cO, \ba)}^{-1}$ is the descent of $\omega_{\bR(m)^{s}(\ba)}^{-1}$. Consider the following commutative diagram:
\[
	\xymatrix{\bR(m)^{s}(\ba) \ar[d] &
	\bS(m)^{s}(\ba) \ar[l]_{f}\ar[d]^{q}\\
	\bR(m)^{s}(\ba)/\SL_{\nu(m)} = \bR(m)^{s}(\ba)/\PGL_{\nu(m)}
	\ar@{=}[d]&
	[\bS(m)^{s}(\ba) /\GL_{\nu(m)}] \ar@{=}[d]\\
	\bM_{X}(r, \cO, \ba) & \cM_{X}(r, \cO, \ba) \ar[l]_{p}.}
\]
By construction, $\bS(m)^{s}(\ba) \cong \pi_{*}\det \cE \setminus \{0\}$. In particular, $f^{*}\det \cE|_{\bR(m)^{s}(\ba) \times \{y\}}$ has a non-vanishing global section and hence is trivial. Since $\det R\pi_{*}\cE$ and $\det \cQ_{p^{i}, j}$ are functorial, with abuse of notation,
\[
	f^{*}\omega_{\bR(m)^{s}(\ba)}^{-1} \cong
	(\det R\pi_{*}\cE)^{2r} \otimes
	\left(\bigotimes_{i=1}^{n}\bigotimes_{j=1}^{r-1}
	(\det\cQ_{p^i,j})^{2}\right).
\]
It descends to $\cL^{2r} \otimes \otimes_{i=1}^{n}F_{i, \lambda}$ because $q^{*}\cL \cong \det R\pi_{*}\cE$ and $q^{*}F_{i, \omega_{j}} = \det \cQ_{p^{i}, r-j}$.

Finally, by \textsf{Step 1} and \textsf{Step 2} of the proof of Proposition \ref{prop:coxringidentification},
\[
\begin{split}
	\rH^{0}(\bM_{X}(r, \cO, \ba), \omega_{\bM_{X}(r, \cO, \ba)}^{-1})
	&\cong
	\rH^{0}(\cM_{X}(r, \cO, \ba), \cL^{2r}\otimes \otimes_{i=1}^{n}
	F_{i, \lambda})\\
	&\cong		
	\rH^{0}(\cM_{X}(r, \cO), \cL^{2r}\otimes \otimes_{i=1}^{n}
	F_{i, \lambda}) \cong \VV_{X, 2r,(\lambda, \lambda, \cdots, \lambda)}^{\dagger}.
\end{split}
\]
\end{proof}

\subsection{Some lemmas}

In this section, we prove three lemmas that we will use for the proof of Theorem \ref{thm:mainthmintro}.

\begin{lemma}\label{lem:extension}
Let $\bq \supset \bp$ be any extended point configuration. Set $X = (C, \bp) \in \cM_{g, n}$ and $\widetilde{X} = (C, \bq)$. Let $\ba$ be a general parabolic weight for $X$. Then there is a parabolic weight $\ba'$ for $\widetilde{X}$ such that there are morphisms $\cM_{\widetilde{X}}(r, \cO, \ba') \to \cM_{X}(r, \cO, \ba)$ and $\bM_{\widetilde{X}}(r, \cO, \ba') \to \bM_X (r, \cO, \ba)$.
\end{lemma}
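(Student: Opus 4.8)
The plan is to take $\ba'$ equal to $\ba$ at the original points and very small at the added points, and to realize both maps by forgetting the flag data at the new points. Write $n = |\bp|$ and index the points of $\bq\setminus\bp$ by $i>n$.

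First I would define the weight $\ba'$ on $\widetilde X$ so that $(a')^i_\bullet = a^i_\bullet$ for $i\le n$, while for $i>n$ the sequence $(a')^i_\bullet$ is strictly decreasing with all entries in $(0,\epsilon)$ for a parameter $\epsilon>0$ fixed below (e.g.\ $(a')^i_j=\epsilon(r-j)/r$); this is a genuine parabolic weight for $\widetilde X$. The operation
\[
    (E, \{W_\bullet^i\}_{i\in\bq}, \phi) \;\longmapsto\; (E, \{W_\bullet^i\}_{i\in\bp}, \phi)
\]
that discards the flags at the added points while keeping $E$ and the trivialization $\phi:\det E \stackrel{\cong}{\to}\cO$ is functorial in families and defines a morphism $\cM_{\widetilde X}(r,\cO)\to\cM_X(r,\cO)$ of the ambient stacks: it is simply the projection of the iterated flag bundle that forgets the factors over $\bq\setminus\bp$. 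What must be checked is that it carries the $\ba'$-semistable locus into the $\ba$-semistable locus.

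The key step is the implication: if $\cE'=(E,\{W_\bullet^i\}_{i\in\bq})$ is $\ba'$-semistable, then its image $\cE=(E,\{W_\bullet^i\}_{i\in\bp})$ is $\ba$-semistable. For a subbundle $F\subset E$ with the induced parabolic structures a direct slope computation gives
\[
    \bigl(\mu_{\bfb'}(\cF')-\mu_{\ba'}(\cE')\bigr)-\bigl(\mu_{\bfb}(\cF)-\mu_{\ba}(\cE)\bigr)
    = \frac{1}{\rk F}\sum_{i>n} w_i(F) - \frac{1}{r}\sum_{i>n}\sum_{j=1}^{r} (a')^i_j,
\]
where $w_i(F)$ is the induced parabolic weight of $F$ at the $i$-th added point. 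Since all new weights lie in $(0,\epsilon)$, the right-hand side has absolute value at most $2|\bq\setminus\bp|\,\epsilon$. On the other hand, because $\ba$ is general it lies at positive distance from the finite union of walls $H(s,d,\{J^i\})$ meeting the compact polytope $\cW$; an $\ba'$-semistable $E$ bounds $\deg F$ from above, and a hypothetical $\ba$-destabilizer bounds it from below, so only finitely many numerical types $(\rk F,\deg F,\{J^i\})$ are relevant. Hence there is a uniform gap $\delta>0$ with $\mu_{\bfb}(\cF)-\mu_{\ba}(\cE)\ge\delta$ for every genuine $\ba$-destabilizer. Fixing $\epsilon<\delta/(2|\bq\setminus\bp|)$ then forces $\mu_{\bfb'}(\cF')-\mu_{\ba'}(\cE')>0$, contradicting $\ba'$-semistability; thus no $\ba$-destabilizer exists and $\cE$ is $\ba$-semistable. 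This produces the stack morphism $\cM_{\widetilde X}(r,\cO,\ba')\to\cM_X(r,\cO,\ba)$.

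Finally, composing with $p:\cM_X(r,\cO,\ba)\to\bM_X(r,\cO,\ba)$ gives a morphism from $\cM_{\widetilde X}(r,\cO,\ba')$ to the algebraic space $\bM_X(r,\cO,\ba)$. By the universal property of the (good, resp.\ coarse) moduli space $\cM_{\widetilde X}(r,\cO,\ba')\to\bM_{\widetilde X}(r,\cO,\ba')$ --- any map from the stack to an algebraic space factors uniquely through it --- this descends to the desired $\bM_{\widetilde X}(r,\cO,\ba')\to\bM_X(r,\cO,\ba)$. I expect the stability comparison to be the only real obstacle: one must rule out that perturbing by small weights at the new points creates a fresh destabilizing subbundle, and this is exactly what the positive slope gap coming from the genericity of $\ba$ guarantees. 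The functoriality of forgetting flags and the descent to coarse spaces are formal.
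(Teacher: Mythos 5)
Your proof is correct and follows essentially the same route as the paper: the same choice of $\ba'$ (agreeing with $\ba$ at the points of $\bp$ and sufficiently small at the new points), the same forgetful morphism on stacks, and the same descent to coarse/good moduli spaces via their universal property. The only difference is that you make explicit the uniform slope-gap estimate (finitely many relevant numerical types $(\rk F, \deg F, \{J^i\})$, hence a gap $\delta>0$ from the genericity of $\ba$, beaten by choosing $\epsilon$ small) which the paper compresses into the single remark that small weights do not affect the stability inequalities --- your quantitative version correctly justifies that remark.
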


\begin{proof}
We may assume that $\bp = (p^{1}, p^{2}, \cdots, p^{n})$ and $\bsq = (p^{1}, p^{2}, \cdots, p^{n+m})$. Let $\ba'$ be a parabolic weight such that ${a'}_{\bullet}^{i} = a_{\bullet}^{i}$ for $i \le n$ and ${a'}_{j}^{i}$ are sufficiently small for $i > n$. There is a natural forgetful map
\begin{eqnarray*}
	\cM_{\widetilde{X}}(r, \cO, \ba') & \to & \cM_{X}(r, \cO, \ba)\\
	(E, \{W_{\bullet}^{i}\}) & \mapsto &
	(E, \{W_{\bullet}^{i}\}_{i \le n}).
\end{eqnarray*}
This map is regular, because small weights $(a_{\bullet}^{i})_{i > n}$ do not affect the inequalities for the stability. The morphism between coarse moduli spaces comes from the universal property of the coarse moduli space.
\end{proof}

The next lemma is a Mori theoretic interpretation of Theorem \ref{thm:Paulycoarse}.

\begin{lemma}\label{lem:projectivemodel}
Let $g \ge 1$ and let $\ba$ be a dominant weight. Let $D$ be a divisor on $\bM_{X}(r, \cO, \ba)$ such that $\cO(D)$ is the descent of $\cL^{\ell} \otimes \bigotimes_{i=1}^{n}F_{i, \lambda^{i}}$ such that $(\lambda^{i}, \theta) = \lambda_{1}^{i} < \ell$. Suppose that $\ba'$ is the parabolic weight such that $a_{j}^{i'} = \lambda_{j}^{i}/\ell$. Then $\bM_{X}(r, \cO, \ba)(D) \cong \bM_{X}(r, \cO, \ba')$.
\end{lemma}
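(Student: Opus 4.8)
The plan is to compute the section ring
\[
R(D) := \bigoplus_{m \ge 0}\rH^{0}(\bM_{X}(r, \cO, \ba), \cO(mD)),
\]
whose $\proj$ is by definition $\bM_{X}(r, \cO, \ba)(D)$, and to identify it as a graded $\CC$-algebra with the section ring of an ample line bundle on $\bM_{X}(r, \cO, \ba')$. First I would rewrite each graded piece in terms of conformal blocks. Since $\cO(mD)$ is the descent of $\cL^{m\ell}\otimes\bigotimes_{i=1}^{n}F_{i, m\lambda^{i}}$ and $\ba$ is dominant, the argument of \textsf{Step 1} and \textsf{Step 2} in the proof of Proposition \ref{prop:coxringidentification}, together with Theorem \ref{thm:Paulystack}, gives
\[
\rH^{0}(\bM_{X}(r, \cO, \ba), \cO(mD)) \cong \rH^{0}(\cM_{X}(r, \cO), \cL^{m\ell}\otimes\bigotimes_{i=1}^{n}F_{i, m\lambda^{i}}) \cong \VV_{X, m\ell, m\vec{\lambda}}^{\dagger}.
\]
Hence $R(D) \cong \bigoplus_{m \ge 0}\VV_{X, m\ell, m\vec{\lambda}}^{\dagger}$, where the product is the conformal block multiplication of \eqref{eqn:tensormap}, which agrees with multiplication of sections.

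Next I would realize this ring on $\bM_{X}(r, \cO, \ba')$. The hypothesis $(\lambda^{i}, \theta) = \lambda_{1}^{i} < \ell$ ensures $a_{1}^{i'} = \lambda_{1}^{i}/\ell < 1$, so $\ba'$ is a genuine parabolic weight (with partial flags when some $\lambda_{j}^{i} = \lambda_{j+1}^{i}$, cf. Remark \ref{rem:degeneratedpardata}). Since $\cO(D)$ lies in the image of $p^{*}$ for the weight $\ba$, Proposition \ref{prop:Picardgroupstackandcoarsemoduli} forces the divisibility $r \mid \sum_{i}\sum_{j}\lambda_{j}^{i}$, so Theorem \ref{thm:Paulycoarse}(1) applies to $\ba'$: the line bundle $A := \cL^{\ell}\otimes\bigotimes_{i=1}^{n}F_{i, \lambda^{i}}$ descends to an ample line bundle on $\bM_{X}(r, \cO, \ba')$, and $A^{m}$ is then the descent of $\cL^{m\ell}\otimes\bigotimes_{i=1}^{n}F_{i, m\lambda^{i}}$ by additivity of $\lambda \mapsto F_{\lambda}$. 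Applying Theorem \ref{thm:Paulycoarse}(2) with level $m\ell$ and weights $m\vec{\lambda}$ for every $m$ — the associated parabolic weight is again $\ba'$ — yields degree-preserving isomorphisms $\VV_{X, m\ell, m\vec{\lambda}}^{\dagger} \cong \rH^{0}(\bM_{X}(r, \cO, \ba'), A^{m})$ compatible with the ring structure. Therefore $R(D) \cong \bigoplus_{m \ge 0}\rH^{0}(\bM_{X}(r, \cO, \ba'), A^{m})$ as graded $\CC$-algebras.

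Finally, since $\bM_{X}(r, \cO, \ba')$ is a normal projective variety and $A$ is ample on it, its section ring is finitely generated and $\bM_{X}(r, \cO, \ba')$ is canonically isomorphic to $\proj\bigoplus_{m}\rH^{0}(\bM_{X}(r, \cO, \ba'), A^{m})$ (the standard fact that the $\proj$ of the section ring of an ample line bundle on a projective variety recovers the variety, e.g.\ by passing to a very ample power and the Veronese). Taking $\proj$ of the graded isomorphism from the previous paragraph then gives
\[
\bM_{X}(r, \cO, \ba)(D) = \proj R(D) \cong \bM_{X}(r, \cO, \ba').
\]

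I expect the first identification to be the delicate step: one must ensure that every line bundle in sight descends (the divisibility condition) and that the spaces of global sections on the stack $\cM_{X}(r, \cO)$, on the open substack $\cM_{X}(r, \cO, \ba)$, and on the coarse space $\bM_{X}(r, \cO, \ba)$ all coincide. This is precisely where the dominance of $\ba$ and the codimension estimate of Proposition \ref{prop:codimestimation} enter, exactly as in Proposition \ref{prop:coxringidentification}. Once the graded-ring identification $R(D) \cong \bigoplus_{m}\rH^{0}(\bM_{X}(r, \cO, \ba'), A^{m})$ is in place, the ampleness-and-$\proj$ step is routine, and the lemma is essentially a Mori-theoretic restatement of Theorem \ref{thm:Paulycoarse}.
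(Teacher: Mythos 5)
Your proposal is correct and takes essentially the same route as the paper's own proof: both identify $\rH^{0}(\bM_{X}(r, \cO, \ba), \cO(mD))$ with $\VV_{X, m\ell, m\vec{\lambda}}^{\dagger}$ using the dominance of $\ba$ (via the steps of Proposition \ref{prop:coxringidentification}), then re-identify these conformal blocks with sections of the ample powers of the descended bundle on $\bM_{X}(r, \cO, \ba')$ by Theorem \ref{thm:Paulycoarse}, and conclude by taking $\proj$ of the section ring. The extra checks you supply --- the divisibility condition via Proposition \ref{prop:Picardgroupstackandcoarsemoduli} and the compatibility of the ring structures --- are points the paper leaves implicit, not deviations from its argument.
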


\begin{proof}
As before, let $\vec{\lambda} = (\lambda^{1}, \lambda^{2}, \cdots, \lambda^{n})$.
\[
\begin{split}
	\rH^{0}(\bM_{X}(r, \cO, \ba), \cO(mD))
	&= \rH^{0}(\bM_{X}(r, \cO, \ba), \cL^{m\ell}\otimes
	\bigotimes_{i=1}^{n}F_{i, m\lambda^{i}})\\
	&= \rH^{0}(\cM_{X}(r, \cO, \ba), \cL^{m\ell}\otimes
	\bigotimes_{i=1}^{n}F_{i, m\lambda^{i}})\\
	&= \rH^{0}(\cM_{X}(r, \cO), \cL^{m\ell}\otimes
	\bigotimes_{i=1}^{n}F_{i, m\lambda^{i}})
	= \VV_{X, m\ell, m\vec{\lambda}}^{\dagger}.
\end{split}
\]
Thus
\[
\begin{split}
	\bM_{X}(r, \cO, \ba)(D) &= \proj \bigoplus_{m \ge 0}
	\rH^{0}(\bM_{X}(r, \cO, \ba), \cO(mD)) =
	\proj \bigoplus_{m \ge 0} \VV_{X, m\ell, m\vec{\lambda}}^{\dagger}\\
	&= \proj \bigoplus_{m \ge 0}
	\rH^{0}(\bM_{X}(r, \cO, \ba'), \cO(mD))
	= \bM_{X}(r, \cO, \ba')
\end{split}
\]
because $\cO(D)$ is ample on $\bM_{X}(r, \cO, \ba')$ (Theorem \ref{thm:Paulycoarse}).
\end{proof}

\begin{remark}
Note that $\ba'$ may be a degenerated parabolic weight (Remark \ref{rem:degeneratedpardata}) and that $\bM_{X}(r, \cO, \ba')$ is the moduli space of parabolic bundles with partial parabolic data.
\end{remark}

In general, a subalgebra of a finitely generated algebra may not be finitely generated. However, the next lemma shows the finite generation for a certain type of graded subalgebras.

\begin{lemma}\label{lem:invariantsubring}
Let $A$ be a free abelian group with finite rank. Let $R$ be a finitely generated $A$-graded $\CC$-algebra and let $\pi : R \to A$ be the grading map. Let $B \subset A$ be a subgroup and let $R_{B} := \{x \in R\;|\; \pi(x) \in B\}$. Then $R_{B}$ is also a finitely generated algebra.
\end{lemma}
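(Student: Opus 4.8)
The plan is to realize $R_B$ as a ring of invariants for a linearly reductive group and then invoke Nagata's theorem. First I would record that, because $B$ is a \emph{subgroup} of $A$, the graded subspace $R_B = \bigoplus_{b \in B} R_b$ is closed under multiplication: if $x \in R_b$ and $y \in R_{b'}$ with $b, b' \in B$, then $xy \in R_{b+b'}$ and $b + b' \in B$. Thus $R_B$ is an $A$-graded $\CC$-subalgebra of $R$, and the task is to exhibit it as a ring of invariants $R^{G}$ for a well-chosen group $G$.

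Since $A$ has finite rank, the quotient $A/B$ is a finitely generated abelian group; write $q : A \to A/B$ for the projection. Let $G := \spec \CC[A/B]$ be the diagonalizable group scheme with character lattice $A/B$, so that its $\CC$-points are $\Hom(A/B, \CC^{*})$. I would let $G$ act on $R$ by prescribing that $g \in G$ scales the homogeneous piece $R_a$ by the scalar $g(q(a)) \in \CC^{*}$; because $q$ is a group homomorphism and the grading is multiplicative, this is an action by $\CC$-algebra automorphisms, and by construction $R = \bigoplus_{\bar a \in A/B}\bigl(\bigoplus_{q(a) = \bar a} R_a\bigr)$ is the decomposition into $G$-isotypic components, so the action is rational. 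A homogeneous element of degree $a$ is then $G$-invariant exactly when $g(q(a)) = 1$ for all $g \in G$, i.e. exactly when $q(a) = 0$, since the homomorphisms $A/B \to \CC^{*}$ separate the points of $A/B$ (as $\CC^{*}$ is divisible, hence injective as a $\ZZ$-module). Consequently $R^{G} = \bigoplus_{a \in \ker q} R_a = \bigoplus_{a \in B} R_a = R_B$.

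It then remains to apply Nagata's theorem. The group $R$ is a finitely generated $\CC$-algebra equipped with a rational action of $G$, so \cite[Theorem 3.3]{Dol03} will give that $R^{G} = R_B$ is finitely generated, which is the claim.

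I do not anticipate a deep obstruction here; the one point to handle with care is the \emph{torsion} of $A/B$. When $A/B$ is not free, $G$ is not a torus but rather a product of a torus (from the free part) with a finite abelian group (from the torsion part), so one cannot quote "finite generation of torus invariants" directly. The remedy is simply that over a field of characteristic zero every diagonalizable group scheme is linearly reductive, hence geometrically reductive, which is exactly the hypothesis needed to invoke \cite[Theorem 3.3]{Dol03}. As an alternative that avoids invariant theory altogether, one may argue combinatorially: choosing homogeneous algebra generators $x_{1}, \dots, x_{N}$ of $R$ with $\deg x_{i} = a_{i}$, the exponent vectors $e \in \ZZ_{\ge 0}^{N}$ with $\sum_{i} e_{i} a_{i} \in B$ form the monoid $\ker(\ZZ^{N} \to A/B) \cap \ZZ_{\ge 0}^{N}$, which is finitely generated by Gordan's lemma, and the corresponding monomials generate $R_B$.
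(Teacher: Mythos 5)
Your main argument is essentially the paper's own proof: the paper likewise realizes $R_B$ as the invariant ring of $H = \Hom(A/B, \CC^{*})$ (viewed inside $\Hom(A,\CC^{*})$) acting on $R$ through the grading, observes that $H$ is a torus times a finite abelian group and hence reductive, and concludes by Nagata's theorem \cite[Theorem 3.3]{Dol03}; your version just constructs the same diagonalizable group directly from the character lattice $A/B$ and spells out the point-separation argument that the paper leaves implicit. Your closing alternative via Gordan's lemma is a genuine extra: it is sound (homogeneous components of polynomials in homogeneous generators are spanned by monomials, and the monoid $\ker(\ZZ^{N}\to A/B)\cap\ZZ_{\ge 0}^{N}$ is finitely generated), and it gives a more elementary, invariant-theory-free proof that the paper does not contain.
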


\begin{proof}
It is straightforward to check that $R_{B}$ is a subalgebra. Let $G := \Hom(A, \CC^{*})$ and $H := \Hom(A/B, \CC^{*}) \le G$. Since $A$ is a finitely generated free abelian group, $A/B$ is a direct sum of a finite rank free abelian group and a finite abelian group. Thus $H$ is a direct sum of a torus and a finite abelian group. In particular, $H$ is reductive.

Consider the natural $G$-action on $R$:
\begin{eqnarray*}
	G \times R &\to & R\\
	(\phi, x) & \mapsto & \phi(\pi(x))x.
\end{eqnarray*}
Then there is a natural $H$-action on $R$ via the inclusion $H \to G$ and $R_{B} = R^{H}$. By Nagata's theorem (\cite[Theorem 3.3]{Dol03}), $R^{H}$ is finitely generated.
\end{proof}

\subsection{Proof of finite generation}\label{ssec:finitegeneration}

A projective variety $X$ is of \emph{Fano type} if there is a $\QQ$-divisor $\Delta$ such that $(X, \Delta)$ is log Fano.

\begin{theorem}\label{thm:fanotype}
For any general parabolic weight $\ba$, $\bM_{X}(r, \cO, \ba)$ is of Fano type.
\end{theorem}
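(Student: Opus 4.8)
I want to show that $\bM_X(r,\cO,\ba)$ is of Fano type, i.e.\ that there is an effective $\QQ$-divisor $\Delta$ making $(\bM_X(r,\cO,\ba),\Delta)$ log Fano. The natural strategy is to exploit the computation of the anticanonical bundle from Proposition~\ref{prop:canonicaldivisor} together with the ampleness statements from Theorem~\ref{thm:Paulycoarse}. By Proposition~\ref{prop:canonicaldivisor}, for a dominant weight $\omega_{\bM}^{-1}$ is the descent of $\cL^{2r}\otimes\bigotimes_i F_{i,\lambda}$ with $\lambda=2\sum_{j=1}^{r-1}\omega_j$; this exhibits $-K$ as (the descent of) a specific line bundle of the type appearing in Theorem~\ref{thm:Paulycoarse}(i), which is ample precisely when the associated parabolic weight lies in the interior of the weight space. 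Thus for a suitable choice the anticanonical itself is already close to ample, and the remaining task is to absorb any failure of strict positivity into a boundary divisor $\Delta$.

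\textbf{Main steps.}
First I would reduce to the case of a \emph{dominant} weight. Proposition~\ref{prop:codimestimation} guarantees that dominant weights exist for every $g\ge 1$, and the wall-crossing maps of Section~\ref{ssec:wallcrossing} (Proposition~\ref{prop:wallcrossing}) relate $\bM_X(r,\cO,\ba)$ for nearby weights by blow-ups of smooth loci. Since being of Fano type is preserved under such small birational modifications (each $\phi^\pm$ is a composition of smooth blow-ups and flips, under which Fano type is stable), it suffices to treat a single dominant weight and then propagate across the finitely many walls intersecting $\Delta_{r-1}^n$. Second, for the dominant weight, I would write $-K_{\bM}$ as the descent of $\cL^{2r}\otimes\bigotimes_i F_{i,\lambda}$ and interpret this via Theorem~\ref{thm:Paulycoarse}: this line bundle is semiample (indeed its section ring is a conformal block algebra $\VV^\dagger_{X,2r,(\lambda,\dots,\lambda)}$ by Proposition~\ref{prop:canonicaldivisor}), and by part (i) of Theorem~\ref{thm:Paulycoarse} it is ample on the moduli space associated to the weight $\ba'$ with $a_j^{i\prime}=\lambda_j/(2r)$. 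Third, I would choose $\Delta$ to be a small effective $\QQ$-divisor supported on the boundary (the strictly-semistable / wall loci), scaled by a small $\epsilon>0$, so that $-(K_{\bM}+\Delta)$ remains ample while $(\bM,\Delta)$ stays klt; here smoothness of $\bM_X(r,\cO,\ba)$ for general $\ba$ keeps the pair klt for free, and one only needs $\Delta$ effective with $-(K+\Delta)$ ample.

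\textbf{The hard part.}
The genuine obstacle is establishing the ampleness (or at least bigness and nefness) of $-K_{\bM}$ rather than merely semiampleness, since the anticanonical weight $\lambda=2\sum\omega_j$ corresponds under Theorem~\ref{thm:Paulycoarse}(i) to a \emph{specific} parabolic weight $\ba'$ which need not coincide with our chosen $\ba$. In general $-K_{\bM}$ will only be semiample, defining a contraction $\bM_X(r,\cO,\ba)\to \bM_X(r,\cO,\ba')$ (this is exactly Lemma~\ref{lem:projectivemodel}). The remedy is not to demand that $-K$ itself be ample, but to find an effective $\Delta$ so that $-(K+\Delta)$ is ample: concretely, take an ample descent $H$ of some $\cL^\ell\otimes\bigotimes F_{i,\mu^i}$ (ample by Theorem~\ref{thm:Paulycoarse}(i)), and write $-K_{\bM}=\tfrac{1}{t}H+(\,-K_{\bM}-\tfrac{1}{t}H\,)$ for large $t$; the correction term is semiample, hence for small perturbation one can represent it by an effective $\Delta$ in the same numerical class, after which $-(K+\Delta)\sim_\QQ \tfrac{1}{t}H$ is ample. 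Checking that $\Delta$ can be made klt is where I expect to spend the most care, but the smoothness of $\bM_X(r,\cO,\ba)$ for general $\ba$ (from Mehta--Seshadri) makes this manageable, since a sufficiently small effective $\QQ$-divisor on a smooth variety gives a klt pair.
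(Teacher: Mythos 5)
There is a genuine gap, and it sits exactly where you flagged ``the hard part.'' Your remedy is to write $-K_{\bM}=\tfrac1t H+\bigl(-K_{\bM}-\tfrac1t H\bigr)$ and claim the correction term is semiample, hence representable by an effective $\Delta$ with $(\bM,\Delta)$ klt. This cannot work: if $-K_{\bM}-\tfrac1t H$ were semiample it would be nef, and then $-K_{\bM}=\bigl(-K_{\bM}-\tfrac1t H\bigr)+\tfrac1t H$ would be ample, which is precisely what fails for a general dominant $\ba$. Indeed $-K_{\bM}$ descends from $\cL^{2r}\otimes\bigotimes_i F_{i,\lambda}$, whose associated parabolic weight $\lambda_j/(2r)=(r-j)/r$ is exactly the central weight $\ba_{c}$ of Definition~\ref{def:weightac}; so for $\ba$ in a chamber not containing $\ba_{c}$ in its closure, the map $\bM_X(r,\cO,\ba)\dashrightarrow\bM_X(r,\cO,\ba_c)$ involves flips, $-K_{\bM}$ is big but \emph{not nef}, and neither is any $-K_{\bM}-\tfrac1t H$. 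What bigness does give you is an effective representative $\Delta\sim_{\QQ}-K_{\bM}-\tfrac1t H$, but its class is a \emph{fixed} big class: you cannot scale it down without destroying the linear equivalence $-(K+\Delta)\sim_\QQ\tfrac1t H$, so ``small effective divisor on a smooth variety is klt'' does not apply. Any effective representative lives in the stable base locus of $-K_{\bM}$ and may be badly singular there; arranging klt-ness for such a $\Delta$ is essentially equivalent (by \cite{GOST15}) to the Fano type statement itself. In short, your argument only uses ``$\bM$ smooth and $-K_{\bM}$ big,'' which does not imply Fano type. Your reduction step has the same directional problem: Fano type is \emph{not} preserved by smooth blow-ups, so you cannot propagate it across walls through the common blow-up of Proposition~\ref{prop:wallcrossing}; the preservation statements available (\cite[Theorem 1.1, Corollary 1.3]{GOST15}) go only through small $\QQ$-factorial modifications and \emph{images} of morphisms.

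The missing idea is to start at the one place where nefness is free. Take $\ba$ general and close to $\ba_{c}$; then the wall-crossing map $c:\bM_X(r,\cO,\ba)\to\bM_X(r,\cO,\ba_{c})$ is an actual morphism, and it is \emph{small} (or an isomorphism) because the strictly semistable locus has codimension $\ge 2$ (Remark~\ref{rmk:codimofsslocus} -- this is where the codimension estimate enters, not just to produce dominant weights). Since $-K_{\bM_X(r,\cO,\ba_c)}$ is ample by Theorem~\ref{thm:Paulycoarse} and Proposition~\ref{prop:canonicaldivisor}, $-K_{\bM}=c^{*}(-K_{\bM_X(r,\cO,\ba_c)})$ is nef and big, so $\bM$ is a smooth weak Fano variety, hence of Fano type (here the scaling trick $-(K+\epsilon E)\sim_\QQ(1-\epsilon)(-K)+\epsilon A$ with $-K\sim_\QQ A+E$ works, because $\epsilon$ can be taken small). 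Then go \emph{outward}: by Lemma~\ref{lem:projectivemodel} every other $\bM_X(r,\cO,\ba)$ is a projective model $\bM_X(r,\cO,\bfb)(D)$ of this weak Fano model, i.e.\ reached by flips and blow-downs only, and \cite{GOST15} preserves Fano type along such rational contractions. Finally, your claim that dominant weights exist for all $g\ge 1$ is false when $g=1$ and $n\le r$ (Proposition~\ref{prop:codimestimation}(2) requires $n>r$); that case needs a separate argument, realizing $\bM_X(r,\cO,\ba)$ as the image of a moduli space with extra parabolic points (Lemma~\ref{lem:extension}) and again invoking \cite[Corollary 1.3]{GOST15}.
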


\begin{proof}
First of all, assume that $\ba$ is dominant and sufficiently close to $\ba_{c}$. When $g = 1$, we further assume that $n > r$.

By Proposition \ref{prop:Picardgroupforcoarsemoduli}, $\bM := \bM_{X}(r, \cO, \ba)$ is of Picard number $n(r-1)+1$. By a wall-crossing, there is a contraction $c : \bM \to \bM_{X}(r, \cO, \ba_{c})$. This is a small contraction (unless it is an isomorphism) because the strictly semistable locus is of codimension at least two. $-K_{\bM} = -c^{*}K_{\bM_{X}(r, \cO, \ba_{c})}$ is nef and big since $-K_{\bM_{X}(r, \cO, \ba_{c})}$ is ample (Theorem \ref{thm:Paulycoarse}, Proposition \ref{prop:canonicaldivisor}) and $\bM$ and $\bM_{X}(r, \cO, \ba_{c})$ are birational. Thus $\bM$ is a smooth weak Fano variety. Thus $\bM$ is of Fano type (See the proof of \cite[Theorem 5.1]{MY17} for an argument.).

Suppose that $\ba$ is a general parabolic weight. When $g = 1$, we still assume that $n > r$. Pick a general parabolic weight $\bfb$ sufficiently close to $\ba_{c}$. By Lemma \ref{lem:projectivemodel}, $\bM_{X}(r, \cO, \ba) = \bM_{X}(r, \cO, \bfb)(D)$ for some $D$. Thus $\bM_{X}(r, \cO, \ba)$ is obtained from $\bM_{X}(r, \cO, \bfb)$ by taking several flips and blow-downs. By \cite[Theorem 1.1, Corollary 1.3]{GOST15}, $\bM_{X}(r, \cO, \ba)$ is also of Fano type.

When $g = 1$ and $n$ is small, by Lemma \ref{lem:extension}, $\bM_{X}(r, \cO, \ba)$ is an image of $\bM_{(C,\bsq)}(r, \cO, \ba')$ for some large $\bsq$. Thus it is of Fano type by \cite[Corollary 1.3]{GOST15}.
\end{proof}

The following is an immediate consequence of \cite[Corollary 1.3.2]{BCHM10}.

\begin{corollary}\label{cor:MDS}
For any general parabolic weight $\ba$, $\bM_{X}(r, \cO, \ba)$ is a Mori dream space.
\end{corollary}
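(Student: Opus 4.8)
The plan is to deduce the statement directly from the Fano type property established in Theorem \ref{thm:fanotype}, combined with the foundational result of the minimal model program in \cite{BCHM10}. Recall that, by \cite{HK00}, being a Mori dream space for a $\QQ$-factorial projective variety with trivial irregularity amounts to finite generation of its Cox ring, equivalently to the possibility of running the minimal model program with scaling for every effective divisor and having it terminate. The precise content of \cite[Corollary 1.3.2]{BCHM10} is that every $\QQ$-factorial projective variety of Fano type enjoys this property, hence is a Mori dream space; so the whole argument reduces to checking that $\bM_{X}(r, \cO, \ba)$ meets the hypotheses of that corollary.

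First I would verify $\QQ$-factoriality. Since $\ba$ is a general parabolic weight, $\ba$-semistability coincides with $\ba$-stability, so $\cM_{X}(r, \cO, \ba)$ is a proper Deligne--Mumford stack whose coarse moduli space $\bM_{X}(r, \cO, \ba)$ is smooth and projective by Mehta--Seshadri (\cite{MS80}). In particular $\bM_{X}(r, \cO, \ba)$ is $\QQ$-factorial. Theorem \ref{thm:fanotype} then supplies a $\QQ$-divisor $\Delta$ for which $(\bM_{X}(r, \cO, \ba), \Delta)$ is log Fano, i.e.\ $\bM_{X}(r, \cO, \ba)$ is of Fano type. I would also note that the trivial-irregularity condition implicit in the Mori dream space formalism is automatic here: a variety of Fano type is rationally connected, so $\rH^{1}(\bM_{X}(r, \cO, \ba), \cO) = 0$ and $\Pic$ agrees with $\rN^{1}$ up to torsion.

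With all hypotheses in place, applying \cite[Corollary 1.3.2]{BCHM10} to $\bM_{X}(r, \cO, \ba)$ immediately yields that it is a Mori dream space. I do not expect any genuine obstacle at this final stage: the substantive geometric input---that the canonical bundle formula of Proposition \ref{prop:canonicaldivisor} together with the small wall-crossing contraction to $\bM_{X}(r, \cO, \ba_{c})$ makes $\bM_{X}(r, \cO, \ba)$ weak Fano, and that transferring along flips and blow-downs via \cite{GOST15} preserves Fano type for arbitrary general $\ba$---was already carried out in Theorem \ref{thm:fanotype}. The only additional point to record here is that the general-weight assumption forces smoothness, so that the $\QQ$-factoriality requirement of the cited corollary is satisfied with no further work, and the corollary then applies verbatim.
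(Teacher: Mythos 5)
Your proof is correct and matches the paper's own argument: the paper deduces Corollary \ref{cor:MDS} as an immediate consequence of Theorem \ref{thm:fanotype} together with \cite[Corollary 1.3.2]{BCHM10}, exactly as you do. Your additional remarks verifying $\QQ$-factoriality (via smoothness of $\bM_{X}(r, \cO, \ba)$ for general $\ba$, by \cite{MS80}) and trivial irregularity are sound details that the paper leaves implicit.
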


Now we are ready to prove our main theorem for a smooth curve.

\begin{theorem}\label{thm:finitegeneration}
For any smooth $X = (C, \bp) \in \cM_{g, n}$, $\VV_{X}^{\dagger}$ is finitely generated.
\end{theorem}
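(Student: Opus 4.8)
The plan is to reduce the statement to the finite generation of a Cox ring of a projective variety and then invoke the Mori dream space property established above. First I would treat the \textbf{main case}, in which a dominant parabolic weight is available: this is $g \ge 2$, where every general weight is dominant, and $g = 1$ with $n > r$, where Proposition \ref{prop:codimestimation} produces a dominant $\ba$ very close to $\ba_c$. Fix such a dominant $\ba$; by Definition \ref{def:dominantweight} it is in particular general. Then Corollary \ref{cor:conformalblockCoxring} gives $\VV_X^{\dagger} \cong \Cox(\cM_X(r, \cO))$, Proposition \ref{prop:coxringidentification} gives $\Cox(\cM_X(r, \cO)) \cong \Cox(\bM_X(r, \cO, \ba))$, and Corollary \ref{cor:MDS} asserts that $\bM_X(r, \cO, \ba)$ is a Mori dream space, so its Cox ring is finitely generated by definition. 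Chaining these isomorphisms shows that $\VV_X^{\dagger}$ is finitely generated in the main case.

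The \textbf{remaining case} is $g = 1$ with $n \le r$ (including $n = 0$), where no dominant weight exists and the identification with the Cox ring of a coarse moduli space is unavailable. Here I would enlarge the marked-point configuration: choose $m$ additional smooth points so that $\widetilde X = (C, \bq)$ with $\bq \supset \bp$ has $n + m > r$ points. By the main case, $\VV_{\widetilde X}^{\dagger}$ is finitely generated. The propagation of vacua (Theorem \ref{thm:propagation}) identifies, for every level and every weight datum at the original points, the conformal block space at $X$ with the conformal block space at $\widetilde X$ in which the weights at the $m$ new points are set to $0$. I would then check that these isomorphisms are compatible with the multiplication, so that $\VV_X^{\dagger}$ is realized as the graded subalgebra of $\VV_{\widetilde X}^{\dagger} \cong \Cox(\cM_{\widetilde X}(r, \cO))$ consisting of the graded pieces whose $\Pic$-degree has vanishing components along the new points.

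Finally, that subalgebra is exactly of the form $R_B$ of Lemma \ref{lem:invariantsubring}: taking $A = \Pic(\cM_{\widetilde X}(r, \cO)) \cong \ZZ^{(n+m)(r-1)+1}$ and $B \subset A$ the direct-summand subgroup where the coordinates attached to the $m$ new points vanish, one has $\VV_X^{\dagger} \cong R_B$. Since $R = \VV_{\widetilde X}^{\dagger}$ is finitely generated, Lemma \ref{lem:invariantsubring} (Nagata's theorem applied to the reductive group cutting out $B$) gives finite generation of $R_B$, completing this case. The main obstacle is precisely the genus-one small-$n$ situation: the codimension estimate of Proposition \ref{prop:codimestimation} degenerates there, so no dominant weight exists and the clean Cox-ring argument breaks down; the propagation-of-vacua reduction combined with the invariant-subring lemma is the device that salvages it, and the one point that genuinely needs care is verifying that propagation of vacua respects the full algebra structure, not merely the individual graded pieces.
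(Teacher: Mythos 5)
Your argument for $g \ge 1$ is essentially the paper's own proof: the same two-case split (when a dominant weight exists, chain Corollary \ref{cor:conformalblockCoxring}, Proposition \ref{prop:coxringidentification}, and Corollary \ref{cor:MDS}; when $g = 1$ and $n \le r$, use propagation of vacua plus Lemma \ref{lem:invariantsubring}), and your identification of $\VV_{X}^{\dagger}$ with the subalgebra $R_{B}$ of $\VV_{\widetilde{X}}^{\dagger}$ graded by the subgroup $B = \Pic(\cM_{X}(r,\cO)) \subset \Pic(\cM_{\widetilde{X}}(r,\cO))$ is exactly the paper's $\left(\VV_{\widetilde{X}}^{\dagger}\right)_{\Pic(\cM_{X}(r, \cO))}$. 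If anything, your case division is slightly more careful than the paper's literal wording, since you note explicitly that $g \ge 2$ with $n \le r$ is covered because every general weight is then dominant; and you correctly flag that the propagation isomorphism must be checked to respect products, a point the paper passes over silently.

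The one genuine omission is genus zero, which the statement includes ($X \in \cM_{0,n}$ is a smooth pointed curve) but your case analysis never touches: your main case starts at $g \ge 2$ or $g = 1$, $n > r$, and your remaining case is $g = 1$, $n \le r$. None of your tools apply when $g = 0$: the bound of Proposition \ref{prop:codimestimation} degenerates to $(r-1)(g-1)+1 = 2 - r \le 0$, so no dominant weight is produced, and for $g = 0$ the moduli space $\bM_{X}(r,\cO,\ba)$ can even be empty for some weights (effectiveness fails), so the identification $\Cox(\cM_{X}(r,\cO)) \cong \Cox(\bM_{X}(r,\cO,\ba))$ is unavailable by these methods. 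The paper disposes of this case in one line by citing \cite{MY17}, where genus zero is handled by a separate analysis of effective weights on $\PP^{1}$; your write-up needs at least that citation (or a genuinely different argument) to cover the full statement.
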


\begin{proof}
The case of $g = 0$ is shown in \cite{MY17}. Assume that $g \ge 1$ and $n > r$. Let $\ba$ be a dominant weight. By Proposition \ref{prop:coxringidentification}, $\Cox(\cM_{X}(r, \cO))\cong\Cox(\bM_{X}(r, \cO, \ba))$. Since $\bM_{X}(r, \cO, \ba)$ is an MDS by Corollary \ref{cor:MDS}, $\VV_{X}^{\dagger} =  \Cox(\cM_{X}(r, \cO))$ is finitely generated.

When $g = 1$ and $n\le r$, take a sufficiently large point configuration $\bq \supset \bp$ and set $\widetilde{X} = (C, \bq)$. There is a forgetful morphism $\cM_{\widetilde{X}}(r, \cO) \to \cM_{X}(r, \cO)$ which forgets flags for $p^{i} \in \bq \setminus \bp$. Then $\Pic(\cM_{X}(r, \cO))$ is embedded into $\Pic(\cM_{\widetilde{X}}(r, \cO))$.
By propagation of vacua (Theorem \ref{thm:propagation}),
\[
	\VV_{X}^{\dagger} = \bigoplus_{\ell, \vec{\lambda}}
	\VV_{X, \ell, \vec{\lambda}}^{\dagger} \cong
	\bigoplus_{\ell, \vec{\lambda}}
	\VV_{\widetilde{X}, \ell,
	(\lambda^{1}, \lambda^{2}, \cdots, \lambda^{n},
	0, 0, \cdots, 0)}^{\dagger}.
\]
The last algebra is precisely $\left(\VV_{\widetilde{X}}^{\dagger}\right)_{\Pic(\cM_{X}(r, \cO))}$ (see Lemma \ref{lem:invariantsubring} for the notation). By Lemma \ref{lem:invariantsubring}, it is finitely generated.
\end{proof}

\section{Mori's program}\label{sec:Moriprogram}

Fix a smooth pointed curve $X = (C, \bp)$ of positive genus. Since $\bM_{X}(r, \cO, \ba)$ is an MDS by Corollary \ref{cor:MDS}, one may apply Mori's program at least theoretically, and classify all rational contractions of $\bM_{X}(r, \cO, \ba)$. In this section, we describe Mori's program for $\bM_{X}(r, \cO, \ba)$. For $g(C) = 0$ case, see \cite[Section 6]{MY17}.

By Theorem \ref{thm:Picstack}, every $\RR$-line bundle on $\cM_{X}(r, \cO)$, or equivalently, every $\RR$-line bundle on $\bM_{X}(r, \cO, \ba)$ for a dominant $\ba$ (Definition \ref{def:dominantweight}), can be written uniquely as
\[
	\cL^{\ell}\otimes \bigotimes_{i=1}^{n}\bigotimes_{j = 1}^{r-1}
	F_{i, \omega_{j}}^{d_{j}^{i}}
\]
for some $\ell, d_{j}^{i} \in \RR$. Note that $F_{i, \lambda} \cong \bigotimes_{j=1}^{r-1}F_{i, \omega_{j}}^{\lambda_{j} - \lambda_{j+1}}$.  Thus one may identify $\Pic(\cM_{X}(r, \cO))_{\RR}$ with $\RR^{n(r-1)+1}$ with coefficients $(\ell, d_{j}^{i})$.

\begin{definition}\label{def:coneE}
Let $E \subset \Pic(\cM_{X}(r, \cO))_{\RR} \cong \RR^{n(r-1)+1}$ be a convex polyhedral cone defined as the intersection of the following half spaces:
\begin{enumerate}
\item $\ell \ge 0$, $d_{j}^{i} \ge 0$;
\item $\sum_{j=1}^{r-1} d_{j}^{i} \le \ell$.
\end{enumerate}
Note that $E$ is strongly convex because this is a subcone of the first octant.
\end{definition}

The first step of Mori's program, which is the computation of the effective cone, is done by the next proposition.

\begin{proposition}\label{prop:effectivecone}
Let $\ba$ be a dominant parabolic weight. Then $\Eff(\bM_{X}(r, \cO, \ba)) = E$.
\end{proposition}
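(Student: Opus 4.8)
The plan is to read the effective cone off the Cox ring, using that $\bM := \bM_X(r, \cO, \ba)$ is a Mori dream space (Corollary \ref{cor:MDS}). For an MDS the effective cone is closed and rational polyhedral, and it is generated by the classes carrying a nonzero section; equivalently it is the cone spanned by the support of $\Cox(\bM)$. Since $\ba$ is dominant, Proposition \ref{prop:coxringidentification} together with Corollary \ref{cor:conformalblockCoxring} identifies $\Cox(\bM)$ with the conformal block algebra $\VV_X^\dagger = \bigoplus_{\ell, \vec\lambda}\VV_{X, \ell, \vec\lambda}^\dagger$, compatibly with the identification $\Pic(\cM_X(r, \cO))_\RR \cong \RR^{n(r-1)+1}$ fixed before Definition \ref{def:coneE}. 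Hence $\Eff(\bM)$ is the closed cone generated by $\{(\ell, \vec\lambda)\;|\;\VV_{X, \ell, \vec\lambda}^\dagger \ne 0\}$, and everything reduces to locating this support.

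First I would prove $\Eff(\bM) \subseteq E$. A graded piece $\VV_{X, \ell, \vec\lambda}^\dagger$ is nonzero only for admissible data: $\ell \ge 0$, each $\lambda^i$ dominant, and $(\theta, \lambda^i) \le \ell$. Writing $d_j^i = \lambda_j^i - \lambda_{j+1}^i$ and using $\lambda_r^i = 0$, dominance reads $d_j^i \ge 0$, while $(\theta, \lambda^i) = \lambda_1^i = \sum_{j=1}^{r-1} d_j^i \le \ell$ is the last inequality; together with $\ell \ge 0$ these are exactly the half-spaces of Definition \ref{def:coneE}. Thus the support of $\VV_X^\dagger$ lies in $E$ and $\Eff(\bM) \subseteq E$. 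If one prefers not to take the support statement as given, the vanishing of $\rH^0(\cM_X(r, \cO), \cL^\ell \otimes \bigotimes_{i} F_{i, \lambda^i})$ for a non-dominant $\lambda^i$ follows from Borel--Weil on the flag factors $\Fl(\CC^r)^n$ appearing in the uniformization (Theorem \ref{thm:uniformization}), and the positivity and level inequalities follow from the construction of conformal blocks through the integrable modules $H_{\ell, \lambda^i}$, or from the genus-zero degeneration and factorization used in \textsf{Step 3} of the proof of Proposition \ref{prop:coxringidentification}.

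For the reverse inclusion I would show that $\mathrm{int}(E)$ consists of big classes. A class of $\Pic(\bM)$ lying in $\mathrm{int}(E)$ corresponds to a divisor $D$ with $\cO(D)$ the descent of $\cL^\ell \otimes \bigotimes_{i} F_{i, \lambda^i}$, where $\ell > 0$, each $\lambda^i$ is strictly dominant, and $\lambda_1^i < \ell$. These are precisely the hypotheses of Lemma \ref{lem:projectivemodel}, which identifies the projective model $\bM(D) \cong \bM_X(r, \cO, \ba')$ for the genuine general weight $\ba' = (\lambda_j^i / \ell)$. Since $g \ge 1$, Corollary \ref{cor:nonempty} gives $\bM_X(r, \cO, \ba') \ne \emptyset$; as it is a full-dimensional projective variety on which $\cO(D)$ is ample (Theorem \ref{thm:Paulycoarse}), the divisor $D$ is big, hence effective. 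Such classes are dense in $\mathrm{int}(E)$, so $\mathrm{int}(E) \subseteq \Eff(\bM)$.

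Because $\Eff(\bM)$ is closed and $\mathrm{int}(E)$ is dense in the closed cone $E$, the second step upgrades to $E = \overline{\mathrm{int}(E)} \subseteq \Eff(\bM)$, and combined with the first step this gives $\Eff(\bM) = E$. I expect the genuine obstacle to be the inclusion $\Eff(\bM) \subseteq E$: one must rule out any section of $\cL^\ell \otimes \bigotimes_{i} F_{i, \lambda^i}$ outside the admissible cone, in particular enforce the level bound $\sum_{j} d_j^i \le \ell$ and the positivity $\ell \ge 0$. Once the support of the conformal block algebra is pinned down---most cleanly via the $\PP^1$-degeneration and factorization already invoked for Proposition \ref{prop:coxringidentification}, where the vanishing reduces to that of genus-zero conformal blocks outside the admissible range---the remaining steps are routine.
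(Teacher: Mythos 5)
Your proposal is correct and takes essentially the same route as the paper: the inclusion $E \subseteq \Eff(\bM_{X}(r, \cO, \ba))$ via Lemma \ref{lem:projectivemodel}, nonemptiness of the model $\bM_{X}(r, \cO, \bfb)$ (Corollary \ref{cor:nonempty}), bigness from birationality, and closedness of the effective cone of an MDS; and the reverse inclusion via vanishing of the conformal blocks attached to classes outside $E$. The only difference is packaging (you phrase both inclusions through the support of the Cox ring and spell out the vanishing outside the admissible range, which the paper asserts in a single line), not substance.
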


\begin{proof}
Let $D \in \mathrm{int} \; E$ be an integral divisor class and $(\ell, d_{j}^{i})$ is the coefficients of $D$. Set $\lambda^{i} := \sum_{j=1}^{r-1}d_{j}^{i}\omega_{j}$. Then $\cO(D) \cong \cL^{\ell}\otimes \bigotimes_{i=1}^{n}F_{i, \lambda^{i}}$, $(\lambda^{i}, \theta) = \lambda_{1}^{i} = \sum_{j=1}^{r-1}d_{j}^{i} < \ell$,  $\lambda_{j}^{i} > \lambda_{j+1}^{i}$. So by setting $b_{j}^{i} := \lambda_{j}^{i}/\ell$, we obtain a parabolic weight $\bfb$. By Lemma \ref{lem:projectivemodel}, $\bM_{X}(r, \cO, \ba)(D) = \bM_{X}(r, \cO, \bfb)$. Since $\bM_{X}(r, \cO, \bfb) \ne \emptyset$ (Corollary \ref{cor:nonempty}), $\rH^{0}(\cO(mD)) \ne 0$ for some $m > 0$. Moreover, since $\bM_{X}(r, \cO, \bfb)$ is birational to $\bM_{X}(r, \cO, \ba)$, $D$ is big and $D \in \mathrm{int}\; \Eff(\bM_{X}(r, \cO, \ba))$. Thus $E$ is in the closure of $\Eff(\bM_{X}(r, \cO, \ba))$. The latter cone is closed since $\bM_{X}(r, \cO, \ba)$ is an MDS. Hence, $E \subset \Eff(\bM_{X}(r, \cO, \ba))$.

If $D$ is not in $E$, then $D$ violates one of the inequalities in Definition \ref{def:coneE}. Then the associated conformal block $\VV_{X, \ell, \vec{\lambda}}^{\dagger}$ is trivial. Thus the linear system $|D|$ on $\bM_{X}(r, \cO, \ba)$ is empty. Thus $D \notin \Eff(\bM_{X}(r, \cO, \ba))$.
\end{proof}

Lemma \ref{lem:projectivemodel} says that for any big divisor $D$ of $\bM_{X}(r, \cO, \ba)$, $\bM_{X}(r, \cO, \ba)(D) = \bM_{X}(r, \cO, \bfb)$ for some parabolic weight $\bfb$. Thus \emph{all} birational models that we may obtain while applying Mori's program are moduli spaces of parabolic bundles for some $\bfb$, and any birational rational contractions of $\bM_{X}(r, \cO, \ba)$ are described in terms of wall-crossings of moduli spaces of parabolic bundles (Section \ref{ssec:wallcrossing}). In particular, if a birational model is also smooth, the rational contraction is a composition of smooth blow-ups and blow-downs. For a non big divisor, rational contractions are described by Lemma \ref{lem:extension} and the following lemma.

\begin{lemma}
Let $\ba$ be a dominant parabolic weight. Suppose that $D$ is a divisor on $\bM_{X}(r, \cO, \ba)$ such that $\cO(D)$ is isomorphic to the descent of $\cL^{\ell} \otimes \bigotimes_{i=1}^{n}F_{i, \lambda^{i}}$, and $(\lambda^{k}, \theta) = \lambda_{1}^{k} = \ell$ and $(\lambda^{i}, \theta) < \ell$ for $i \ne k$. Then
\[
	\bM_{X}(r, \cO, \ba)(D) \cong \bM_{X}(r, \cO(-p^{k}), \bfb)
\]
where $\bfb$ is a partial parabolic weight such that $b^{i} = \frac{1}{\ell}(\lambda_{1}^{i}, \lambda_{2}^{i}, \cdots, \lambda_{r-1}^{i})$ for $i \ne k$ and $b^{k} = \frac{1}{\ell}(\lambda_{1}^{k}-\lambda_{r-1}^{k}, \lambda_{2}^{k} - \lambda_{r-1}^{k}, \cdots, \lambda_{r-2}^{k} - \lambda_{r-1}^{k})$ (the last flag is of type $(2, 3, \cdots, r-1)$).
\end{lemma}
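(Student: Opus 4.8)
The plan is to follow the template of Lemma \ref{lem:projectivemodel}, reducing the projective model to a section ring of conformal blocks, and then to reinterpret that ring through a Hecke modification at $p^{k}$ which moves the boundary weight back into the interior of the alcove.

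First I would compute the section ring. Writing $\vec{\lambda} = (\lambda^{1}, \cdots, \lambda^{n})$, exactly as in the proof of Lemma \ref{lem:projectivemodel} — using \textsf{Step 1} and \textsf{Step 2} of Proposition \ref{prop:coxringidentification} together with Theorem \ref{thm:Paulystack} — one obtains, for all $m \ge 0$,
\[
	\rH^{0}(\bM_{X}(r, \cO, \ba), \cO(mD)) \cong \VV_{X, m\ell, m\vec{\lambda}}^{\dagger},
\]
and hence $\bM_{X}(r, \cO, \ba)(D) \cong \proj \bigoplus_{m \ge 0} \VV_{X, m\ell, m\vec{\lambda}}^{\dagger}$. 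This step is insensitive to whether the inequalities $(\lambda^{i}, \theta) < \ell$ are strict; the hypothesis $(\lambda^{k}, \theta) = \ell$ only obstructs the final appeal to ampleness via Theorem \ref{thm:Paulycoarse}, since the associated weight $a_{1}^{k} = \lambda_{1}^{k}/\ell = 1$ lies on the wall $a_{1} = 1$ of the simplex and is not a genuine parabolic weight.

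The heart of the argument is to identify this $\proj$ with $\bM_{X}(r, \cO(-p^{k}), \bfb)$ by a Hecke modification at $p^{k}$. For a parabolic bundle $\cE = (E, \{W_{\bullet}^{i}\})$ with $\det E = \cO$, set $E' := \ker(E \to E|_{p^{k}}/W_{r-1}^{k})$, the elementary transformation determined by the hyperplane $W_{r-1}^{k}$; then $\det E' \cong \cO(-p^{k})$, and the line $K := \ker(E'|_{p^{k}} \to E|_{p^{k}})$ together with the old flag produces a partial flag $K \oplus W_{1}^{k} \subset K \oplus W_{2}^{k} \subset \cdots \subset K \oplus W_{r-2}^{k}$ of type $(2, 3, \cdots, r-1)$ on $E'|_{p^{k}}$. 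I would check that this construction works in families and defines an isomorphism between $\cM_{X}(r, \cO)$ and the stack of bundles with determinant $\cO(-p^{k})$ carrying this partial-flag datum at $p^{k}$, and that it carries $\ba'$-(semi)stability for the wall weight into $\bfb$-(semi)stability: because the degree drops by one and the weight at $p^{k}$ shifts by a fixed constant, the parabolic slopes of all subbundles shift by a single global constant, so the stability comparison is preserved. Under this identification, functoriality of the determinant line bundle $\cL$ and of the flag line bundles $F_{i, \lambda^{i}}$ shows that $\cL^{\ell} \otimes \bigotimes_{i=1}^{n} F_{i, \lambda^{i}}$ corresponds to the Pauly-type ample generator attached to the weight $\bfb$ on $\bM_{X}(r, \cO(-p^{k}), \bfb)$. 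Invoking Pauly's identification of conformal blocks with sections (valid for arbitrary determinant and partial flags, cf. the Remark following Theorem \ref{thm:Paulycoarse}) then gives $\VV_{X, m\ell, m\vec{\lambda}}^{\dagger} \cong \rH^{0}(\bM_{X}(r, \cO(-p^{k}), \bfb), \cO(mD'))$ with $\cO(D')$ ample, so that $\proj \bigoplus_{m \ge 0} \VV_{X, m\ell, m\vec{\lambda}}^{\dagger} \cong \bM_{X}(r, \cO(-p^{k}), \bfb)$, completing the proof.

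The main obstacle will be the two verifications in the middle paragraph: that the elementary transformation is an isomorphism of moduli spaces matching the two stability conditions \emph{exactly}, and that it transports the polarization $\cL^{\ell} \otimes \bigotimes_{i} F_{i, \lambda^{i}}$ to the correct ample bundle for $\bfb$. The weight bookkeeping — that the subtracted constant is precisely $a_{r-1}^{k}$ and that the dropped subspace is $W_{1}^{k}$, yielding $b_{j}^{k} = (\lambda_{j}^{k} - \lambda_{r-1}^{k})/\ell$ — is the delicate part, and one must also confirm that Pauly's section identification extends cleanly to the degree $-1$, partial-flag setting.
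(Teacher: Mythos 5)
Your opening step is sound and matches the paper: for any dominant $\ba$ the section ring of $D$ is $\bigoplus_{m\ge 0}\VV_{X,m\ell,m\vec{\lambda}}^{\dagger}$, with the boundary equality $(\lambda^{k},\theta)=\ell$ only blocking the final appeal to ampleness, and the elementary transformation $E'=\ker(E\to E|_{p^{k}}/W_{r-1}^{k})$ is indeed the central construction in the paper's proof. But the heart of your argument contains a genuine error: this construction is \emph{not} an isomorphism between $\cM_{X}(r,\cO)$ (full flags) and the stack of determinant-$\cO(-p^{k})$ bundles with a type $(2,3,\cdots,r-1)$ partial flag at $p^{k}$. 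Count dimensions: the full flag variety has dimension $r(r-1)/2$, while the partial flag variety of type $(2,\cdots,r-1)$ has dimension one less. Equivalently, to invert the construction one must choose, in addition to the partial flag $V_{2}\subset\cdots\subset V_{r-1}$ on $E'|_{p^{k}}$ (correctly $V_{j+1}=\iota^{-1}(W_{j}^{k})$, not $K\oplus W_{j}^{k}$, which lives in the wrong space), the line $K\subset V_{2}$ along which to perform the inverse Hecke modification, and your map forgets this choice. What is true --- and what the paper proves --- is that $(E,\{W_{\bullet}^{i}\})\mapsto(E',\{W_{\bullet}^{'i}\})$ induces a $\PP^{1}$-fibration $\bM_{X}(r,\cO,\ba)\to\bM_{X}(r,\cO(-p^{k}),\bfb)$ with fiber $\PP(V_{2})$. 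This is not cosmetic: the positive-dimensional fibers are exactly why $D$ is nef but not big (it lies on a facet of $\Eff$), so no argument transporting $\cO(D)$ to an ample bundle under an isomorphism can succeed. Relatedly, your claim that ``the parabolic slopes of all subbundles shift by a single global constant'' is false: the transform $F\cap E'$ of a subbundle $F\subset E$ loses a unit of degree precisely when $F|_{p^{k}}\not\subset W_{r-1}^{k}$, and it is the boundary weight $a_{1}^{k}=1$ that compensates in that case; the actual verification is the case analysis of \cite[Proposition 6.7]{MY17}, and it only yields the stability match when $\ba$ is sufficiently close to the wall weight $\frac{1}{\ell}(\lambda^{i})$.

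This forces two steps you omit. First, since the given dominant $\ba$ need not be near the wall, the paper reduces to that case by running a directed MMP on $D(t)=(1-t)A+tD$ and invoking Lemma \ref{lem:projectivemodel} to realize $\bM_{X}(r,\cO,\ba)(D(1-\epsilon))\cong\bM_{X}(r,\cO,\ba')$ with $\ba'$ near $\frac{1}{\ell}(\lambda^{i})$; only then does the $\PP^{1}$-fibration exist on the relevant model. Second, rather than extending Pauly's section/ampleness identification to the degree $-1$, partial-flag setting (which you flag as needing confirmation but do not supply --- Theorem \ref{thm:Paulycoarse} as cited is for trivial determinant), the paper argues Mori-theoretically: the fibration has relative Picard number one, so its target is the projective model of a divisor class spanning a facet of $\Eff(\bM_{X}(r,\cO,\ba))$; the facets $d_{j}^{i}=0$ are already accounted for by Lemma \ref{lem:projectivemodel}, so the target sits on some facet $\sum_{j=1}^{r-1}d_{j}^{i}=\ell$, and the correct index $i=k$ is identified by intersecting the fiber class with the theta and flag divisors (the flags at $i\ne k$ are constant along the fibers). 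To salvage your more direct route you would need to replace the isomorphism claim by: $\cO(D)$ has degree zero on the $\PP^{1}$-fibers, descends (up to a power) to the base, and is ample there --- which is in substance what the paper's facet argument establishes.
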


\begin{proof}
The proof is identical to that of \cite[Proposition 6.7]{MY17}. Here we give an outline of the proof. We may assume that $k = n$.

First of all, suppose that $\ba$ is sufficiently close to $\frac{1}{\ell}(\lambda^{i})$. For any $\cE = (E, \{W_{\bullet}^{i}\}) \in \cM_{X}(r, \cO, \ba)$, let $\cE' = (E', \{W_{\bullet}^{'i}\})$ be a parabolic bundle obtained as follows. Let $E'$ be the kernel of the quotient map $E \to E|_{p^{n}} \to E|_{p^{n}}/W_{r-1}^{n} \to 0$. Let $W_{j}^{'i} = \iota_{i}^{-1}W_{j}^{i}$, where $\iota_{i} : E'|_{p^{i}} \to E|_{p^{i}}$. Note that $\det E' \cong \cO(-p^{n})$.

By the computation in the proof of \cite[Proposition 6.7]{MY17}, one may check that $\cE'$ is stable with respect to $\bfb$. Thus we have a functorial morphism $\bM_{X}(r, \cO, \ba) \to \bM_{X}(r, \cO(-p^{n}), \bfb)$. This is a $\PP^{1}$-fibration and of relative Picard number one. Thus $\bM_{X}(r, \cO(-p^{n}), \bfb)$ is a projective model of $\bM_{X}(r, \cO, \ba)$ associated to a nef but not a big divisor lying on a facet of $\Eff(\bM_{X}(r, \cO, \ba))$. Since the projective models for the facets $d_{j}^{i} = 0$ are given by Lemma \ref{lem:projectivemodel}, $\bM_{X}(r, \cO(-p^{n}), \bfb)$ is associated to one of the facets of the form $\sum_{j=1}^{r-1}d_{j}^{i} = \ell$ for some $i$. It corresponds to the facet for $i = n$ because for the bundles parametrized by a fiber $f$ of $\bM_{X}(r, \cO, \ba) \to \bM_{X}(r, \cO(-p^{n}), \bfb)$, the flags $\{W_{\bullet}^{i}\}$ for $i \ne n$ are constant and the intersection number of $f$ with the theta divisor does not change while we vary $a^{i}$ and $\ba$ approaches another facet $\sum_{j=1}^{r-1}d_{j}^{i} = \ell$ for $i \ne n$.

For the general case, take an ample $\QQ$-divisor $A$ on $\bM_{X}(r, \cO, \ba)$ and run directed MMP for $D(t) = (1-t)A + tD$, $0 \le t \le 1$. We may assume that while running MMP, every wall-crossing is a simple one by choosing a general $A$. For a small rational $\epsilon > 0$, the rational contraction $\bM_{X}(r, \cO, \ba) \dashrightarrow \bM_{X}(r, \cO, \ba)(D)$ is a composition of $\bM_{X}(r, \cO, \ba) \dashrightarrow \bM_{X}(r, \cO, \ba)(D(1-\epsilon)) \dashrightarrow \bM_{X}(r, \cO, \ba)(D(1)) = \bM_{X}(r, \cO, \ba)(D)$. Since $D(1-\epsilon)$ is big and $\bM_{X}(r, \cO, \ba)(D(1-\epsilon)) \cong \bM_{X}(r, \cO, \ba')$ for some $\ba'$ by Lemma \ref{lem:projectivemodel}. Because $\ba'$ is sufficiently close to $\frac{1}{\ell}(\lambda^{i})$, there is a regular morphism $\bM_{X}(r, \cO, \ba') \to \bM_{X}(r, \cO(-p^{k}), \bfb)$ as previously. Thus there is a rational contraction $\bM_{X}(r, \cO, \ba) \dashrightarrow \bM_{X}(r, \cO(-p^{k}), \bfb)$ with positive dimensional fibers. By the same argument we may conclude that this is a projective model associated to the facet $\lambda_{1}^{k} = \ell$.
\end{proof}

\section{Singular curve case}\label{sec:singularcurve}

The finite generation of $\VV_{X}^{\dagger}$ for smooth pointed curves, which was shown in \cite{MY17}, and Theorem \ref{thm:finitegeneration} implies the finite generation of $\VV_{X}^{\dagger}$ for arbitrary stable curves. As a corollary, we construct a flat family of irreducible normal projective varieties over $\overline{\cM}_{g, n}$, which extends the relative moduli space of parabolic vector bundles. It is a generalization of \cite[Theorem 1.2]{BG19}.

\begin{theorem}\label{thm:finitegenerationsingular}
Let $X = (C, \bp) \in \overline{\cM}_{g, n}$. Then the algebra $\VV_{X}^{\dagger}$ is a finitely generated integrally closed domain.
\end{theorem}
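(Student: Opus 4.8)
The plan is to deduce the singular case from the smooth case (Theorem \ref{thm:finitegeneration} together with \cite{MY17}) by peeling off the nodes of $C$ one at a time, using the factorization at the level of \emph{algebras} of conformal blocks (Proposition \ref{prop:factorizationalgebra}, i.e. \cite[Proposition 3.1]{Man18}) to realize $\VV_{X}^{\dagger}$ as a sublattice-graded subalgebra of the conformal block algebra of a partial normalization. Concretely, I would induct on the number $\delta$ of nodes of $C$, proving the statement for the larger class of (possibly disconnected) stable nodal curves whose irreducible components are smooth; for a disconnected such curve I define $\VV^{\dagger}$ to be the tensor product over $\CC$ of the conformal block algebras of its connected components.

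For the base case $\delta = 0$ the curve is a disjoint union of smooth connected stable pointed curves $X_{i}$. Each factor $\VV_{X_{i}}^{\dagger}$ is finitely generated by Theorem \ref{thm:finitegeneration} (and by \cite{MY17} when $g(X_{i}) = 0$), and is moreover an integrally closed domain: by Proposition \ref{prop:coxringidentification} (respectively \cite{MY17}) it is the Cox ring $\Cox(\bM_{X_{i}}(r, \cO, \ba))$ of an irreducible normal projective variety with finitely generated free Picard group, and the Cox ring of such a variety is a normal domain (the case $g=1$ with few points being reduced, inside Theorem \ref{thm:finitegeneration}, to invariants of such a Cox ring via Lemma \ref{lem:invariantsubring}, which preserves this property by the argument below). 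Since $\CC$ is algebraically closed, the tensor product $\bigotimes_{i}\VV_{X_{i}}^{\dagger}$ of finitely generated integrally closed domains is again a finitely generated integrally closed domain.

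For the inductive step I would choose a node of $C$ and let $X'$ be the partial normalization at that node, a stable curve with $\delta - 1$ nodes carrying two additional marked points $q_{+}, q_{-}$. Factorization (Theorem \ref{thm:factorization}, in its algebra form Proposition \ref{prop:factorizationalgebra}) identifies $\VV_{X}^{\dagger}$ with the sum of those graded pieces of $\VV_{X'}^{\dagger}$ whose weights at $q_{+}$ and $q_{-}$ are dual to one another at equal level; the separating and non-separating cases are covered uniformly by parts (2) and (1) of Theorem \ref{thm:factorization}. Since $\nu \mapsto \nu^{*}$ is a linear involution of the weight lattice, this duality cuts out a subgroup $B$ of the grading group $A$ of $\VV_{X'}^{\dagger}$, where $A$ is a finitely generated free abelian group (a subgroup of $\Pic(\cM_{X'}(r, \cO))$, cf. Remark \ref{rem:Picforsingularcurve}); that is, $\VV_{X}^{\dagger} \cong (\VV_{X'}^{\dagger})_{B}$ in the notation of Lemma \ref{lem:invariantsubring}. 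By the inductive hypothesis $\VV_{X'}^{\dagger}$ is a finitely generated integrally closed domain, so Lemma \ref{lem:invariantsubring} yields finite generation of $\VV_{X}^{\dagger}$. Writing $\VV_{X}^{\dagger} = (\VV_{X'}^{\dagger})^{H}$ for the reductive group $H = \Hom(A/B, \CC^{*})$ as in that proof, the invariants of an integrally closed domain form an integrally closed domain: $(\VV_{X'}^{\dagger})^{H}$ is a subring of a domain, hence a domain, and it equals $\VV_{X'}^{\dagger} \cap \mathrm{Frac}((\VV_{X'}^{\dagger})^{H})$ inside $\mathrm{Frac}(\VV_{X'}^{\dagger})$, so any element of its fraction field integral over it lies in $\VV_{X'}^{\dagger}$ (by normality) and is $H$-invariant, hence lies in $(\VV_{X'}^{\dagger})^{H}$. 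This completes the induction.

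The routine verifications—that normalizing a node lowers $\delta$ by one while preserving stability, and that the duality relation is genuinely a subgroup condition on the grading—are immediate. The substantive inputs are imported rather than reproved: the main obstacle is ensuring that the factorization isomorphism respects the multiplicative structure, so that $\VV_{X}^{\dagger}$ is \emph{literally} a sublattice-graded subalgebra of $\VV_{X'}^{\dagger}$ to which Lemma \ref{lem:invariantsubring} applies (this is exactly the algebra-level factorization of \cite{Man18}), together with the availability of finite generation for smooth curves as the base case—Theorem \ref{thm:finitegeneration}, whose proof rests on the codimension estimate of Proposition \ref{prop:codimestimation} and the Fano-type property of Theorem \ref{thm:fanotype}.
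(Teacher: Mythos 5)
Your proposal is correct and follows essentially the same route as the paper: the paper normalizes all nodes at once and, using the algebra-level factorization (Proposition \ref{prop:factorizationalgebra}), realizes $\VV_{X}^{\dagger}$ as $\left(\bigotimes_{j \in J}\VV_{X_{j}}^{\dagger}\right)_{K}$ for the subgroup $K$ of the grading lattice cut out by the equal-level and dual-weight conditions, then applies Lemma \ref{lem:invariantsubring} once, with normality of the factors coming from the Cox-ring description (or its torus-invariant subring) exactly as you argue. Your only deviations are cosmetic: you peel off nodes one at a time by induction, applying Lemma \ref{lem:invariantsubring} at each step rather than in one stroke; you re-prove, rather than cite \cite[Proposition 3.1]{Dol03}, that invariants of a normal domain under the reductive group $H$ are normal (your argument is valid); and your induction class should simply be all possibly disconnected stable pointed nodal curves --- the restriction ``whose irreducible components are smooth'' would wrongly exclude curves with self-nodal components, though your inductive step as written handles them anyway, so this is a misstatement rather than a gap.
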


It is enough to prove for $X \in \overline{\cM}_{g, n}\setminus \cM_{g, n}$.

Let $\nu : \widetilde{C} = \sqcup_{j \in J} C_{j} \to C$ be the normalization map. For each singular point $x \in C$, we denote two points in $\nu^{-1}(x)$ by $p_{x}$ and $q_{x}$. A special point on $\widetilde{C}$ is either the inverse image of a marked point or that of a singular point. By abuse of notation, we denote $\nu^{-1}(p^{i})$ by $p^{i}$. For any connected component $C_{j}$ of $\widetilde{C}$, we denote the set of special points on $C_{j}$ by $\bq_{j}$. $\bq_{j}^{sing} \subset \bq_{j}$ is the set of the inverse images of singular points. Let $X_{j} = (C_{j}, \bq_{j})$. Finally, let $\bq = \sqcup_{j \in J}\bq_{j}$, $\bq^{sing} = \sqcup_{j \in J}\bq_{j}^{sing}$, and let $\widetilde{X} = (\widetilde{C}, \bq)$.

Let $S_{\ell}$ be the set of sequences $\vec{\mu} = (\mu^{y})_{y \in \bq^{sing}}$, where each $\mu^{y}$ is a dominant integral weight satisfying $(\mu^{y}, \theta) \le \ell$ and $\mu^{q_{x}} = \mu^{p_{x} *}$ for any singular point $x \in C$. For any $j \in J$, where $J$ is the set of irreducible components of $C$, any $\vec{\lambda} = (\lambda^{1}, \lambda^{2}, \cdots, \lambda^{n})$, and $\vec{\mu} \in S_{\ell}$, let $(\vec{\lambda} \star \vec{\mu})_{j}$ be the assignment of dominant integral weights for $\bq_{j}$ defined as:
\begin{enumerate}
\item If $y = p^{i}\in \bq_{j}\setminus \bq_{j}^{sing}$, assign $\lambda^{i}$;
\item If $y \in \bq_{j}^{sing}$, assign $\mu^{y}$.
\end{enumerate}

Consider the algebra
\[
	\bigotimes_{j \in J}\VV_{X_{j}}^{\dagger}
	= \bigoplus \bigotimes_{j \in J}
	\VV_{X_{j}, \ell_{j}, \vec{\lambda}_{j}}^{\dagger}.
\]

By applying factorization repeatedly, we have
\[
	\VV_{X, \ell, \vec{\lambda}}^{\dagger} \cong
	\bigoplus_{\vec{\mu} \in S_{\ell}}
	\bigotimes_{j \in J}\VV_{X_{j}, \ell,
	(\vec{\lambda}\star \vec{\mu})_{j}}^{\dagger}.
\]
Thus there is a natural injective map
\begin{equation}\label{eqn:algebrafactorization}
	\VV_{X}^{\dagger} \to \bigotimes_{j \in J}\VV_{X_{j}}^{\dagger}.
\end{equation}

\begin{proposition}[(\protect{\cite[Proposition 3.1]{Man18}})]\label{prop:factorizationalgebra}
The map \eqref{eqn:algebrafactorization} is an algebra homomorphism.
\end{proposition}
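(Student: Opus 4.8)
The plan is to show directly that the isomorphism underlying \eqref{eqn:algebrafactorization} intertwines the two product structures. The normalization $\nu$ separates the nodes of $C$ one at a time, and \eqref{eqn:algebrafactorization} is the composition of the single-node factorization maps attached to this process; since a composition of algebra homomorphisms is again one, it suffices to treat a single node $x$. Thus I would reduce to showing that the single-node factorization
\[
	\VV_{X}^{\dagger}\longrightarrow\VV_{X'}^{\dagger}
\]
is an algebra homomorphism, where $X'=(C',(\bp,p_{x},q_{x}))$ is the partial normalization at $x$, with the two new marked points $p_{x},q_{x}$. Its image is the subalgebra of $\VV_{X'}^{\dagger}$ on which the weights at $p_{x}$ and $q_{x}$ are dual to one another; this is indeed a subalgebra, because under \eqref{eqn:tensormap} a pair of weights $(\mu,\mu^{*})$ and $(\mu',\mu'^{*})$ multiply to $(\mu+\mu',(\mu+\mu')^{*})$, using the additivity $(\mu+\mu')^{*}=\mu^{*}+\mu'^{*}$ of the dual weight.

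I would compare the two maps at the level of their representation-theoretic definitions. By the construction in \eqref{eqn:tensormap}, the product is the transpose of the canonical comultiplication $H_{\ell+m,\vec{\lambda}+\vec{\nu}}\to H_{\ell,\vec{\lambda}}\otimes H_{m,\vec{\nu}}$ assembled from the Cartan inclusions $V_{\lambda+\nu}\hookrightarrow V_{\lambda}\otimes V_{\nu}$ at the marked points. The single-node factorization is the transpose of the sewing map, which at the node sums over $\mu$ and contracts the two branches using the invariant pairing $V_{\mu}\otimes V_{\mu^{*}}\to\CC$, extended to the integrable modules $H_{\ell,\mu}\otimes H_{\ell,\mu^{*}}$. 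With these descriptions in hand, the homomorphism property becomes the commutativity of the square whose horizontal arrows are the products and whose vertical arrows are the factorization isomorphisms. The bookkeeping of weights matches automatically: the product is graded, so the node weights add ($\mu''=\mu+\mu'$), and the duality constraint at the node is preserved by the computation above.

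The real content is a compatibility localized at the node, namely that the sewing contraction is compatible with the Cartan comultiplications carried by the two branches. Unwinding the definitions, this reduces to a finite-dimensional statement about the simple $\mathfrak{g}$-modules. Writing $\langle-,-\rangle_{\lambda}:V_{\lambda}\otimes V_{\lambda^{*}}\to\CC$ for the canonical invariant pairing and $\iota_{\mu,\mu'}:V_{\mu+\mu'}\hookrightarrow V_{\mu}\otimes V_{\mu'}$ for the Cartan inclusion, one must show that
\[
	(\langle-,-\rangle_{\mu}\otimes\langle-,-\rangle_{\mu'})\circ(\iota_{\mu,\mu'}\otimes\iota_{\mu^{*},\mu'^{*}})=c(\mu,\mu')\,\langle-,-\rangle_{\mu+\mu'},\qquad c(\mu,\mu')\neq 0.
\]
This is immediate from Schur's lemma: $V_{\mu+\mu'}$ is a $\mathfrak{g}$-module direct summand of $V_{\mu}\otimes V_{\mu'}$, so any invariant pairing between it and $V_{(\mu+\mu')^{*}}$ is a scalar multiple of the canonical one, and the scalar is nonzero because the Cartan inclusion is split. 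Evaluating on highest- and lowest-weight vectors, with the standard normalizations of the Cartan inclusions and of the pairings, gives $c(\mu,\mu')=1$. The passage from the $V$'s to the integrable modules $H$ is then formal.

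I expect the principal obstacle to be precisely this node-local normalization: the factorization isomorphism a priori depends on auxiliary choices — a formal coordinate at the node and a normalization of the invariant pairing — whereas the product \eqref{eqn:tensormap} is canonical, so one must verify that the constants $c(\mu,\mu')$ are genuinely absorbed and do not distort the grading. This is where the conventions of \cite{Uen08} must be tracked carefully. As an independent confirmation, one could instead argue geometrically through Theorem \ref{thm:Paulystack}: realizing $\VV_{X}^{\dagger}$ as the ring of sections of line bundles on $\cM_{X}(r,\cO)$, the factorization becomes the pullback of sections along the morphism relating $\cM_{X}(r,\cO)$ to the parabolic moduli on the branches glued at the node, and since pullback of sections is multiplicative, the homomorphism property is transparent.
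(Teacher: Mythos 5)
Your proposal follows the same skeleton as the paper's proof — reduce to a single node, then verify commutativity of the square comparing the products \eqref{eqn:tensormap} with the factorization isomorphisms, with all the content concentrated in a node-local compatibility between the sewing datum and the Cartan maps $V_{\lambda+\mu}\to V_{\lambda}\otimes V_{\mu}$ — but you execute the key step in dual form, and that is where your writeup has its one genuinely soft spot. You contract the two branches with the invariant pairings $\langle-,-\rangle_{\mu}:V_{\mu}\otimes V_{\mu^{*}}\to\CC$, which forces you to pin down the scalar $c(\mu,\mu')$; Schur's lemma gives $c(\mu,\mu')\neq 0$, but your claim that evaluating on highest- and lowest-weight vectors yields $c=1$ is not automatic: a Cartan inclusion normalized on highest-weight vectors does not in general carry the lowest-weight vector to the tensor product of lowest-weight vectors with coefficient $1$ (already for $\mathfrak{sl}_{2}$ one has $F^{2}(v\otimes v)=2\,Fv\otimes Fv$ in $V_{1}\otimes V_{1}$), so with ``standard normalizations'' the constant depends on conventions — precisely the hazard you yourself flag in your final paragraph. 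The paper sidesteps this entirely by using the copairing instead of the pairing: the factorization of \cite{Uen08} is characterized by insertion at the node of the canonical vector $\mathbf{1}_{\rho,\rho^{*}}\in V_{\rho}\otimes V_{\rho^{*}}\cong\Hom_{\CC}(V_{\rho},V_{\rho})$ corresponding to the identity endomorphism, and the needed compatibility — that $\mathbf{1}_{\lambda,\lambda^{*}}\otimes\mathbf{1}_{\mu,\mu^{*}}$ maps to $\mathbf{1}_{\lambda+\mu,\lambda^{*}+\mu^{*}}$ — is then tautological, since restricting $\mathrm{id}_{V_{\lambda}\otimes V_{\mu}}$ to the summand $V_{\lambda+\mu}$ and projecting back gives $\mathrm{id}_{V_{\lambda+\mu}}$ on the nose, with no scalar to track. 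So either adopt the identity-vector formulation, after which your computation closes with no normalization bookkeeping, or else show that your constants $c(\mu,\mu')$ form a coboundary consistent with the actual conventions of \cite{Uen08}, which is more work than Schur's lemma alone. One further caution: your proposed geometric confirmation via Theorem \ref{thm:Paulystack} is less transparent than stated, since the factorization is a direct-sum decomposition indexed by the weight $\mu$ at the node rather than pullback of sections along a single morphism of stacks; the paper does not take that route.
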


\begin{proof}
For notational simplicity, we give the proof for when $X = (C, \bp)$ has two irreducible components $X_{1} = (C_{1}, \bq_{1})$ and $X_{2} = (C_{2}, \bq_{2})$ and there is a single node. The general case is obtained by repeating the same argument. To show that $\VV_{X}^{\dagger} \to \VV_{X_{1}}^{\dagger}\otimes \VV_{X_{2}}^{\dagger}$ is an algebra homomorphism, we prove that there is a commutative diagram
\[
\xymatrix{\VV_{X, \ell, \vec{\lambda}}^{\dagger} \otimes \VV_{X, m, \vec{\mu}}^{\dagger} \ar[r]^(.24){\sim} \ar[d]_{\otimes} & \left(\bigoplus\VV_{X_{1}, \ell, \{\vec{\lambda}_{1}, \rho\}}^{\dagger}\hspace{-3pt}\otimes\hspace{-3pt} \VV_{X_{2}, \ell, \{\vec{\lambda}_{2}, \rho^{*}\}}^{\dagger}\right)\hspace{-3pt}\otimes\hspace{-3pt} \left(\bigoplus\VV_{X_{1}, m, \{\vec{\mu}_{1}, \tau\}}^{\dagger}\hspace{-3pt}\otimes\hspace{-3pt} \VV_{X_{2}, m, \{\vec{\mu}_{2}, \tau^{*}\}}^{\dagger}\right)\ar[d]^{\otimes}\\
\VV_{X, \ell+m, \vec{\lambda}+\vec{\mu}}^{\dagger} \ar[r]^(.3){\sim} &\left(\bigoplus \VV_{X_{1}, \ell+m, \{\vec{\lambda}_{1}+\vec{\mu}_{1}, \rho+\tau\}}^{\dagger}\otimes \VV_{X_{2}, \ell+m, \{\vec{\lambda}_{2}+\vec{\mu}_{2}, \rho^{*}+\tau^{*}\}}^{\dagger}\right).}
\]
Note that two vertical maps are induced from the restriction of the domain ($H_{\ell+m, \lambda + \mu} \to H_{\ell, \lambda} \otimes H_{m, \mu}$, see Section \ref{ssec:conformalblocks}). The symbol $\sim$ over horizontal arrows denotes the assignment $\varphi_{\rho} \mapsto \tilde{\varphi}_{\rho}$ which we will explain later.

For each integral partition $\lambda$, there is a one-dimensional trivial subrepresentation of $V_{\lambda}\otimes V_{\lambda^{*}}$. Let $\mathbf{1}_{\lambda, \lambda^{*}}$ be a nonzero vector of $V_{\lambda}\otimes V_{\lambda^{*}} \cong \Hom_{\CC}(V_{\lambda}, V_{\lambda})$, which corresponds to the identity matrix. The vectors $\mathbf{1}_{\lambda, \lambda^{*}}$ are compatible in the sense that the image of $\mathbf{1}_{\lambda, \lambda^{*}} \otimes \mathbf{1}_{\mu, \mu^{*}} \in V_{\lambda}\otimes V_{\lambda^{*}} \otimes V_{\mu}\otimes V_{\mu^{*}}$ in $V_{\lambda+\mu} \otimes V_{\lambda^{*}+\mu^{*}}$ is $\mathbf{1}_{\lambda+\mu, \lambda^{*}+\mu^{*}}$. Now the factorization map is defined as follows. For each $\varphi \in \VV_{X, \ell, \vec{\lambda}}^{\dagger}$, there is a unique linear combination $\varphi = \sum_{\rho, (\rho, \theta) \le \ell}\varphi_{\rho}$, and for each $\rho$, there is a unique $\tilde{\varphi}_{\rho} \in \VV_{X_{1}, \ell, \{\vec{\lambda}_{1}, \rho\}}^{\dagger}\otimes \VV_{X_{2}, \ell, \{\vec{\lambda}_{2}, \rho^{*}\}}^{\dagger}$ such that $\tilde{\varphi}_{\rho}(v \otimes \textbf{1}_{\rho, \rho^{*}}) = \varphi_{\rho}(v)$ (\cite[Section 3.3.2]{Uen08}).

For any basis $\varphi_{\rho} \in \VV_{X, \ell, \vec{\lambda}}^{\dagger}$ and $\psi_{\tau} \in \VV_{X, m, \vec{\mu}}^{\dagger}$ associated to $\tilde{\varphi}_{\rho}$ and $\tilde{\psi}_{\tau}$ respectively and for any $v \otimes w \in \VV_{X, \ell+m, \vec{\lambda}+\vec{\mu}}$,
\[
\begin{split}
(\tilde{\varphi}_{\rho} \otimes \tilde{\psi}_{\tau})(v \otimes w \otimes \textbf{1}_{\rho+\tau, \rho^{*}+\tau^{*}}) &= (\tilde{\varphi}_{\rho} \otimes \tilde{\psi}_{\tau})(v \otimes w \otimes \textbf{1}_{\rho, \rho^{*}}\otimes \textbf{1}_{\tau, \tau^{*}})\\
& = \tilde{\varphi}_{\rho}(v \otimes \textbf{1}_{\rho, \rho^{*}})\tilde{\psi}_{\tau}(w \otimes \textbf{1}_{\tau, \tau^{*}}) = \varphi_{\rho}(v)\psi_{\tau}(w)\\
& = (\varphi_{\rho}\otimes \psi_{\tau})(v\otimes w) = (\widetilde{\varphi_{\rho}\otimes \psi_{\tau}})(v\otimes w \otimes \textbf{1}_{\rho+\tau, \rho^{*}+\tau^{*}}).
\end{split}
\]
Thus we obtain $\tilde{\varphi}_{\rho} \otimes \tilde{\psi}_{\tau} = \widetilde{\varphi_{\rho}\otimes \psi_{\tau}}$, which shows that the two compositions are same.
\end{proof}

Now we can complete the proof. We retain the same notation.

\begin{proof}[of Theorem \ref{thm:finitegenerationsingular}]
Consider $\bigotimes_{j \in J}\VV_{X_{j}}^{\dagger}$.This is a $\prod_{j \in J}\Pic(\cM_{X_{j}}(r, \cO))$-graded $\CC$-algebra. By Theorem \ref{thm:finitegeneration}, this is finitely generated. Each $\VV_{X_{j}}^{\dagger}$ is either the Cox ring of a normal projective variety or its torus invariant subring. Thus it is integrally closed (\cite[Corollary 1.2]{EKW04} and \cite[Proposition 3.1]{Dol03}). The tensor product over the base field $\CC$ of integrally closed domains is also integrally closed.

By Proposition \ref{prop:factorizationalgebra}, $\VV_{X}^{\dagger}$ is a subalgebra of $\bigotimes_{j \in J}\VV_{X_{j}}^{\dagger}$. More precisely, let
\[
	K := \{(\cL^{\ell_{j}}\otimes \otimes_{y \in \bq_{j}}F_{y, \lambda^{y}})
	\;|\; \ell_{j} = \ell_{k} \;\forall j, k \in J,
	\lambda^{q_{x}} = \lambda^{p_{x} *}\;
	\forall p_{x}, q_{x} \in \bq^{sing}\}.
\]
Then $K$ is a saturated subgroup of $\prod_{j \in J}\Pic(\cM_{X_{j}}(r, \cO))$ defined by finitely many linear equations. Now $\VV_{X}^{\dagger} \cong \left(\bigotimes_{j \in J}\VV_{X_{j}}^{\dagger}\right)_{K}$. By Lemma \ref{lem:invariantsubring}, this is finitely generated and integrally closed (\cite[Proposition 3.1]{Dol03}).
\end{proof}

\begin{proof}[of Theorem \ref{thm:applicationintro}]
The algebra of conformal blocks $\VV_{X}^{\dagger}$ forms a flat sheaf $\VV^{\dagger}$ of finitely generated algebras over $\overline{\cM}_{g, n}$. Pick $X = (C, \bp) \in \cM_{g, n}$ and let $A := \Pic(\cM_{X}(r, \cO))$. Then $\VV^{\dagger}$ is a sheaf of $A$-graded algebra. For any $\ba$, we may find $\ell \in \ZZ_{\ge 0}$ and partitions $\lambda^{i}$ such that $a_{j}^{i} = \lambda_{j}^{i}/\ell$. By putting linear equations $a_{j}^{i}\ell = \lambda_{j}^{i}$, we may define a subgroup $B \le A$. Let $\VV_{\ba}^{\dagger} := \left(\VV^{\dagger}\right)_{B}$ be the sheaf of $B \cong \ZZ$-graded algebras $\VV_{\ba, X}^{\dagger} := \left(\VV_{X}^{\dagger}\right)_{B}$. By Lemma \ref{lem:invariantsubring}, each fiber is a finitely generated domain. Since $\VV^{\dagger}_{X}$ is integrally closed, $\left(\VV_{X}^{\dagger}\right)_{B}$ is also integrally closed. By taking
\[
	\cY := \proj \VV_{\ba}^{\dagger},
\]
we obtain the desired result.
\end{proof}

\begin{remark}
Note that if $n = 0$, then we recover \cite[Theorem 1.2]{BG19}.
\end{remark}

It would be a very interesting problem to describe special fibers as moduli spaces of some natural objects. We do not know the precise description yet. See \cite[Section 11.2]{BG19} for some discussion.

\begin{remark}
In \cite{Man18}, Manon describes a flat degeneration of $\VV_{X}^{\dagger}$. His degeneration is $\VV_{X'}^{\dagger}$ where $X' \in \overline{\cM}_{g, n}$ is a maximally degenerated curve whose normalization is a union of three pointed rational curves.
\end{remark}


\end{document}